\newcommand*{\rom}[1]{\expandafter\@slowromancap\romannumeral #1@}
\theoremstyle{definition}
\newtheorem{fact}{Fact}
\newtheorem{thm}[fact]{Theorem}
\newtheorem{lemma}[fact]{Lemma}
\newtheorem{prop}[fact]{Proposition}
\newtheorem{corollary}[fact]{Corollary}
\newtheorem{defini}[fact]{Definition}
\newtheorem{rem}[fact]{Remark}
\newtheorem{question}[fact]{Question}
\newtheorem{remark}[fact]{Remark}
\title{Infinite computations with random oracles} 
\author{Merlin Carl and Philipp Schlicht} 
\date{\today} 
\begin{document} 
\maketitle 

\begin{abstract} 
We consider the following problem for various infinite time machines. If a real is computable relative to large set of oracles such as a set of full measure or just of positive measure, a comeager set, or a nonmeager Borel set, is it already computable? 
We show that the answer is independent from $ZFC$ for ordinal time machines ($OTM$s) with and without ordinal parameters and 
give a positive answer for most other machines. 
For instance, we consider 
infinite time Turing machines ($ITTM$s), unresetting and resetting infinite time register machines ($wITRM$s, $ITRM$s), and $\alpha$-Turing machines ($\alpha$-$TM$s) for countable admissible ordinals $\alpha$.


\end{abstract}

\tableofcontents

\section{Introduction}


If a real 
is Turing computable relative to all oracles in a set of positive measure, then it is Turing computable 
by a classical theorem of Sacks. 
Intuitively, this means that the use of random generators does not enrich the set of computable functions, not even when computability is weakened to computability with positive probability.
This insight 
refutes a possible objection against the Church-Turing-thesis, namely
that a computer 
could 
make randomized choices and thereby compute a function which is not computable by a purely deterministic device. 
The proof depends crucially on the compactness of 
halting Turing computations, i.e. the fact that only finitely many bits of an oracle are read in the course of a halting computation. 

Recently, the first author considered analogues of the Church-Turing-thesis for infinitary computations \cite{Ca2}. This naturally leads to the question whether a similar phenomenon can be observed concerning these
machine models. 
The situation is quite different for ordinal time Turing machines \cite{Ko3}, infinite time Turing machines \cite{HaLe}, unresetting infinite time register machines \cite{wITRM}, 
(resetting) infinite time register machines \cite{ITRM}, $\alpha$-Turing machines \cite{KoSe}, and ordinal time register machines  ($ORM$s) \cite{Ko1, Ko2}. 
All of these machines can consider each bit of a real oracle in the course of a halting computation. 
Nevertheless, the intuitive interpretation of computing relative to oracles in a set of positive measure as a 
randomized computation still makes sense. 

Hence we consider the following problem for each machine model. If a real is computable relative to large set of oracles such as a set of full measure or just of positive measure, a comeager set, or a nonmeager Borel set, is it already computable? 
We first show that this is independent from $ZFC$ for $OTM$s with and without ordinal parameters. 
We then give a positive answer for most other machines. 
For $ITTM$s, writability (eventual writability, accidental writability) in a nonmeager Borel set of oracles imply writability (eventual writability, accidental writability). For $ITRM$s of both kinds, computability in a set of positive measure or a nonmeager Borel set implies computability. For all (for unboundedly many) countable admissible ordinals  $\alpha$, computability by an $\alpha$-$TM$ from a nonmeager Borel set (a set of positive measure) of oracles implies computability.





\section{Ordinal Turing machines} 

Ordinal Turing machines ($OTM$s) can roughly be thought of as Turing machines with tape length and working time the class $Ord$ of ordinals. The machine state and tape content at limit times are obtained as the limit inferior of the earlier configurations. 
The definition and basic properties of $OTM$s can be found in \cite{OTM}.
We will call elements of ${}^{\omega}2$ 
\emph{reals}. 

\begin{defini} 
\begin{enumerate} 
\item A real $x\in {}^{\omega}2$ is \emph{$OTM$-computable} from a real $y\in {}^{\omega}2$ if there is an $OTM$ $P$ such that on input $y$, $P$ halts with output $x$, i.e. $P^y=x$. 
\item A set $A\subseteq {}^{\omega}2$ is \emph{$OTM$-computable} from a real $y$ if there is an $OTM$ $P$ such that for all $x\in {}^{\omega}2$,  $x\in A$ if and only if $P$ halts on input $x\oplus y$, i.e. $P^{x\oplus y}\downarrow$. 
\end{enumerate} 
\end{defini} 

It follows from the proof of \cite[Corollary 2]{SchSe} by application of the search algorithm that for any real $x$ such that $\{x\}$ is $OTM$-computable in $y$, or equivalently $\Delta^1_2$ in $y$, $x$ is $OTM$-computable 
in $y$. Conversely, if $x$ is $OTM$-computable from $y$, then $\{x\}$ is easily $OTM$-computable from $y$ by computing $x$ and comparing $x$ with the input. 
Since these two notions do not coincide for other machine models, computable reals are called \emph{writable} for most other machine models. 
We will say $OTM$-computable when we do not allow ordinal parameters, and $OTM$-computable with ordinal parameters otherwise. 

Let us first collect basic facts about ordinal time Turing machines and their halting times. Most of the results are folklore. 

\begin{defini} Let $\eta^x$ denote the supremum of halting times of $OTMs$ with oracle $x$. 
\end{defini} 

Note that there are gaps in the $OTM$ halting times. 

\begin{lemma} Suppose that $x$ is a real. 
\begin{enumerate} 
\item 
There are $\alpha<\beta$ such that $\beta$ is an $OTM$ halting time but $\alpha$ is not. 
\item All sets in $L_{\eta^x}$ are countable in $L_{\eta^x}$. 
\end{enumerate} 
\end{lemma}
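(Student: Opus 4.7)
The plan for part (1) is a cardinality argument combined with a diagonalization. The set $H^x$ of $OTM$-halting times on oracle $x$ is at most countable, because there are only countably many $OTM$ programs and each halts at most once on a given oracle. Suppose for contradiction that every ordinal below $\eta^x$ is a halting time; then $\eta^x$ coincides with an initial ordinal of cardinality at most $\aleph_0$, so $\eta^x$ is itself countable. Under this no-gap assumption, the increasing enumeration of $H^x$ indexed by program code is $OTM$-computable from $x$, and I plan to use this to construct an $OTM$ with oracle $x$ that, by simulating all programs in parallel and running past each halting time found, halts at some time $\geq \eta^x$, contradicting $\eta^x = \sup H^x$. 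Hence some $\alpha < \eta^x$ fails to be a halting time, and any halting time $\beta > \alpha$ (which exists because $\alpha < \sup H^x$) witnesses the gap.

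For part (2), the plan is to show that each ordinal $\alpha < \eta^x$ is countable inside $L_{\eta^x}$. Given such $\alpha$, pick a halting time $\beta$ with $\alpha \leq \beta < \eta^x$ witnessed by some program $P_n$. The statement ``$P_n$ halts on $x$ at time $\beta$'' is $\Sigma_1$, so $\beta$ is $\Sigma_1$-definable over $L_{\beta+\omega}[x]$ from the parameters $n \in \omega$ and $x$. Taking the Skolem hull of $\omega \cup \{x\}$ in $L_{\beta+\omega}[x]$ and applying Mostowski collapse together with Jensen's condensation lemma then yields a surjection $\omega \to \beta$ that appears at some stage $\gamma < \eta^x$. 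Hence $\alpha$ is countable in $L_{\eta^x}$, and since any set $A \in L_{\eta^x}$ has $L$-rank below $\eta^x$ and therefore lies in $L_\alpha$ for some $\alpha < \eta^x$ which is itself countable in $L_{\eta^x}$, so is $A$.

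The main obstacle in part (1) is designing the diagonal $OTM$ so that it provably halts strictly after $\eta^x$ while using only oracle access (no ordinal parameters): one must exploit the no-gap assumption carefully, since the ordinary halting problem for $OTM$s on $x$ is not $OTM$-decidable from $x$. The resolution is that under the no-gap hypothesis, one does not need to decide halting of individual programs, only to enumerate the (now total) sequence of halting times. In part (2), the key technical point is ensuring that the surjection produced by condensation genuinely appears at some stage $\gamma < \eta^x$ rather than only at some stage of $L$ at large; this follows from the fact that $\beta + \omega < \eta^x$ and the standard fine-structural absoluteness of the collapse.
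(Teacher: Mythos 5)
Your part (1) has a genuine gap at its central step. You assert that, under the no-gap assumption, ``the increasing enumeration of $H^x$ indexed by program code is OTM-computable from $x$'' and that it suffices to ``enumerate the (now total) sequence of halting times''. Neither claim is justified: identifying which programs halt is precisely the halting problem, which is not decidable from $x$, and the no-gap hypothesis says every ordinal below $\eta^x$ is the halting time of \emph{some} program, not that every program halts, so there is no total sequence to enumerate by program index. More importantly, your diagonal machine has no halting condition: a machine that simulates all programs and ``runs past each halting time found'' never stops. The repair (this is the Hamkins--Lewis argument for gaps in ITTM clockable ordinals) is to have the simulator halt at the first simulated stage at which \emph{no} program halts; one must then check that an OTM can detect such a ``silent'' stage (flags for halting events, compatible with the limit rules), and that the least silent stage $\gamma$ must be $<\eta^x$, since otherwise the simulator itself halts at a time $\geq\eta^x$. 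With that criterion the contradiction setup is not even needed: $\gamma$ is not a halting time and halting times exist above it. Note that this is a genuinely different route from the paper, which shows that every halting time $\alpha$ produces a new real in $L_{\alpha+\omega}\setminus L_{\alpha}$, exhibits via condensation ordinals $\gamma$ with no new reals in $L_{\gamma+\omega}\setminus L_{\gamma}$ (hence non-halting times), and then lets an OTM search for the least such $\gamma$, forcing a halting time $\geq\gamma$.

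In part (2) you follow the paper's idea, but the step ``Mostowski collapse together with condensation yields a surjection $\omega\to\beta$ that appears at some stage $\gamma<\eta^x$'' is not justified as written. Collapsing the Skolem hull of $\omega\cup\{x\}$ in $L_{\beta+\omega}[x]$ only shows in $V$ that $\beta$ is countable; the collapse map and the resulting surjection live outside the model unless you add the key point: by absoluteness of ``$c$ is the halting computation of $P_n^x$'' the collapsed structure contains the true computation and hence has height $>\beta$, and since the hull is generated by $\omega\cup\{x\}$, the collapsed level is pointwise definable from $\omega\cup\{x\}$ over itself (in the paper this appears as: the level containing the halting computation is \emph{minimal} with that $\Sigma_1$ property, so the hull is the whole level). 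Only then is a surjection from $\omega$ onto that level definable over it, hence an element a few levels up, which is still below $\eta^x$. Finally, the surjection you obtain lies in the $L[x]$-hierarchy, so your argument establishes countability in $L_{\eta^x}[x]$; the paper avoids this mismatch with the literal statement about $L_{\eta^x}$ by restricting to $x=0$.
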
 

\begin{proof} We assume that $x=0$. We first show that for any $OTM$ halting time $\alpha$ of a program $P$, $L_{\alpha+\omega}-L_{\alpha}$ contains a real. The computation of $P$ is definable over $L_{\alpha}$ 
and hence in $L_{\alpha+1}$. Then $\alpha+1$ is minimal such that $P$ halts in $L_{\alpha+1}$. 
Then the hull of the empty set in $L_{\alpha+1}$ is $L_{\alpha+1}$. Hence there is a surjection from $\omega$ onto $L_{\alpha+1}$ definable over $L_{\alpha+1}$. Hence there is a real $x$ coding $L_{\alpha+1}$ in 
$L_{\alpha+2}$ and hence $x\in L_{\alpha+2}\setminus L_{\alpha}$. 

Suppose that $L_{\alpha+\omega}$ is the transitive collapse of a countable elementary substructure $M\prec L_{\omega_1+\omega}$ and $L_{\alpha}$ is the image of $M\cap L_{\omega_1}$. Then there are no reals in 
$L_{\alpha+\omega}\setminus L_{\alpha}$. Now suppose that $\gamma $ is least such that there are no reals in $L_{\gamma+\omega}\setminus L_{\gamma}$. Then $\gamma$ is not an $OTM$-halting time. We can search for 
$\gamma$ with an $OTM$ and can thus obtain a program with halting time $\geq \gamma$. 
\end{proof} 

This is analogous to $ITTMs$ \cite[Theorem 3.4]{HaLe}, but different from $ITRM$s, where the set of halting times is downwards closed \cite[Theorem 6]{ITRM2}. 


\begin{lemma} The following conditions are equivalent for reals $x,y$. 
\begin{enumerate} 
\item $x$ is $\Delta^1_2$ in $y$.  
\item $x$ is $OTM$-computable in the oracle $y$. 
\item $x\in L_{\eta^y}[y]$. 
\end{enumerate} 
\end{lemma}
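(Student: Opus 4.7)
My plan is to prove the cycle $(2) \Rightarrow (3) \Rightarrow (1) \Rightarrow (2)$, reserving the closing implication for the already-cited consequence of \cite[Corollary 2]{SchSe} combined with the search algorithm noted by the authors just before Definition~1.

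For $(2) \Rightarrow (3)$ I would invoke the standard absoluteness of the ordinal recursion that defines an $OTM$ computation: at successor stages the new configuration is obtained by a $\Delta_0$ transition, at limit stages by taking liminf, and both operations are $\Delta_1$ over each sufficiently long initial segment of $L[y]$. So if a program $P$ halts on oracle $y$ at stage $\alpha$ with output $x$, then the entire halting computation sequence is a definable element of $L_{\alpha+1}[y]$ and consequently $x \in L_{\alpha+\omega}[y] \subseteq L_{\eta^y}[y]$, since $\alpha$ is one of the halting times whose supremum is $\eta^y$.

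For $(3) \Rightarrow (1)$ I would use a basis-type argument. If $x \in L_{\eta^y}[y]$ then $x \in L_\alpha[y]$ with $\alpha$ countable (indeed countable in $L[y]$, since $\eta^y$ is countable and the previous lemma shows that everything below $\eta^y$ is countable there). Hence there is a real $z \in L[y]$ coding the structure $(L_\alpha[y],\in,y)$ together with a designated element naming $x$. Formalising ``$z$ codes a well-founded extensional model of a suitable fragment of $V = L[y]$ of height $\alpha$, satisfies the right $\Sigma_1$ diagram, and its designated element decodes to $x$'' as $\exists z\, \Pi^1_1$ yields a $\Sigma^1_2(y)$ definition of $\{x\}$. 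Uniqueness together with Shoenfield absoluteness then upgrades this to a $\Delta^1_2(y)$ definition. The implication $(1) \Rightarrow (2)$ is quoted from the paragraph after Definition~1.

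The main obstacle will be the basis step $(3) \Rightarrow (1)$: one must genuinely produce a parameter-free $\Sigma^1_2(y)$ description of $\{x\}$, which means the ordinal $\alpha$ and the position of $x$ within $L_\alpha[y]$ must both be packaged into a single existential real witness whose well-foundedness is $\Pi^1_1$ in $V$ (so that Shoenfield applies in both directions). The cleanest route is to existentially quantify over a real coding an initial segment $(L_\beta[y],\in)$ for some $\beta > \alpha$, assert $\Pi^1_1$-well-foundedness plus the relevant $L$-diagram in $\beta$, and then read off $x$ as the decoding of the designated node; absoluteness of well-foundedness ensures the formula captures $x$ in $V$ exactly when it captures it in $L[y]$.
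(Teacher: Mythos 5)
Your cycle $(2)\Rightarrow(3)\Rightarrow(1)\Rightarrow(2)$ matches the paper's decomposition, and the steps $(2)\Rightarrow(3)$ (the halting computation is definable over an initial segment of $L[y]$ below $\eta^y$) and $(1)\Rightarrow(2)$ (quoted from \cite{SchSe}) are exactly what the paper does. The problem is the basis step $(3)\Rightarrow(1)$, where your sketch has a genuine gap. The formula ``there exists a real $z$ coding a well-founded model of the form $L_\beta[y]$ whose designated node decodes to $w$'' does not define the singleton $\{x\}$: the designation is part of the existential witness, so nothing in the formula constrains \emph{which} element gets designated, and the formula is satisfied by every real of $L[y]$ that appears at some countable level. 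If this scheme worked for an arbitrary countable $\alpha$ with $x\in L_\alpha[y]$, it would show that every real of $L[y]$ is $\Delta^1_2$ in $y$, which is false and indeed contradicts the equivalence being proved (the $\Delta^1_2(y)$ reals are exactly those in $L_{\eta^y}[y]$, a set countable in $L[y]$). You correctly flag this as the main obstacle, but the proposed repair (``assert the relevant $L$-diagram in $\beta$ and read off the designated node'') still leaves both $\beta$ and the position of $x$ unpinned.

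What is missing is precisely the special property of $\eta^y$ that the paper supplies in Lemma~\ref{OTMhaltingtimesup}: $\eta^y$ is the supremum of the $\Sigma_1^y$-fixed ordinals. Given $x\in L_{\eta^y}[y]$, choose a $\Sigma_1$ sentence $\varphi$ (in the parameter $y$) and $\beta$ least with $L_\beta[y]\models\varphi$ such that $x\in L_\beta[y]$. By minimality and condensation, $L_\beta[y]$ is the $\Sigma_1$-Skolem hull of $\{y\}$ in itself, so $x$ is $\Sigma_1$-definable over $L_\beta[y]$ from $y$ and a natural number $n$. Now the $\Sigma^1_2(y)$ definition quantifies over codes $z$ of well-founded models satisfying ``$V=L_\gamma[y]$, $\varphi$ holds, and no proper level satisfies $\varphi$'' --- any such model collapses to $L_\beta[y]$ --- and reads off the unique element satisfying the fixed $\Sigma_1$ definition with parameter $n$; here $\varphi$ and $n$ are hard-coded into the formula rather than chosen by the witness, which is what pins down $\{x\}$. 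The corresponding universal form gives the $\Pi^1_2(y)$ definition. This is the content hidden in the paper's terse remark that ``the $L$-least code for $L_\beta[y]$ is $\Delta^1_2$ in $y$''; without tying $\beta$ and the index of $x$ to such definable data, the implication fails.
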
 

\begin{proof} Suppose that $x$ is $\Delta^1_2$ in $y$. Then $x$ is $OTM$-computable in the oracle $y$ by the proof of \cite[Corollary 2]{SchSe}. Since such a computation will last $<\eta^y$ steps, 
the computation and hence $x$ are in $L_{\eta^y}[y]$. Suppose that $x\in L_{\eta^y}[y]$. Then the $L$-least code for $L_{\beta}[y]$ is $\Delta^1_2$ in $y$. So $x$ is $\Delta^1_2$ in $y$. 
\end{proof} 

Thus $\eta^x$ is equal to the supremum 
of $\Delta^1_2$ wellorders in the parameter $x$ on $\omega$ by \cite[Corollary 6]{SchSe}. 

\begin{lemma} $\eta^x$ is an $x$-admissible limit of $x$-admissibles. 
\end{lemma}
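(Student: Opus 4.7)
The plan is to establish both properties by direct OTM constructions, leaning on the characterization from the preceding lemma that a real is OTM-computable from $x$ precisely when it lies in $L_{\eta^x}[x]$, and on the fact (as in part (2) of the first lemma, relativized to $x$) that every set in $L_{\eta^x}[x]$ is countable there.

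For $x$-admissibility, I suppose for contradiction that $L_{\eta^x}[x]$ is not admissible. Combining the failure of $\Sigma_1$-collection with the countability of every set in $L_{\eta^x}[x]$, one extracts a $\Sigma_1$-definable cofinal function $f \colon \omega \to \eta^x$ over $L_{\eta^x}[x]$, with some $\Sigma_0$ matrix $\psi(n,v,w)$. For each $n$, an OTM with oracle $x$ can compute $f(n)$ by simulating the $L[x]$-hierarchy stage by stage and searching for the least $\alpha_n$ carrying a witness $w \in L_{\alpha_n}[x]$ with $L_{\alpha_n}[x] \models \psi(n,f(n),w)$; this takes roughly $\alpha_n$ steps, and since $f(n) < \alpha_n$ and $\sup_n f(n) = \eta^x$, the ordinals $\alpha_n$ are themselves cofinal in $\eta^x$. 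I then design a single OTM $Q$ that on oracle $x$ loops over $n = 0, 1, 2, \ldots$: iteration $n$ performs this search, writes $f(n)$ into output cell $n$, and flips cell $n$ of an auxiliary unary counter from $0$ to $1$. By the liminf rule for limit stages, at the limit of the first $\omega$ iterations every counter cell contains $1$, so $Q$ detects this and halts. The halting time of $Q$ is at least $\sup_n \alpha_n = \eta^x$, contradicting the definition of $\eta^x$ as the supremum of OTM halting times.

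For the limit-of-admissibles statement, fix $\beta < \eta^x$. Since $\eta^x$ is the supremum of $\Delta^1_2(x)$ wellorders on $\omega$, there is a real coding $\beta$ in $L_{\eta^x}[x]$, and by the preceding lemma it is OTM-computable from $x$; let $R$ be such an OTM. Consider the OTM $P$ that first runs $R$ to place a code for $\beta$ on the tape, then continues by constructing $L_\gamma[x]$ stage by stage for $\gamma > \beta$ and testing the Kripke--Platek axioms (in particular $\Sigma_0$-collection) at each new level, halting when the first $x$-admissible $\gamma > \beta$ is reached. Since $P$ is an OTM with oracle $x$, its halting time is strictly below $\eta^x$ (as $\eta^x$ is itself not a halting time, since any halting computation of length $\alpha$ extends to one of length $\alpha+1$), forcing $\gamma < \eta^x$. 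Thus $\gamma$ is an $x$-admissible ordinal strictly between $\beta$ and $\eta^x$, showing that the $x$-admissibles below $\eta^x$ are cofinal in $\eta^x$.

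The main obstacle is the dovetailing in the admissibility argument: one must verify that the auxiliary unary counter in $Q$ behaves correctly across the $\omega$-th limit of iterations (so no cell is spuriously reset to $0$) and that the halt test can be implemented as an OTM subroutine. Both are routine given the liminf convention for OTMs but require careful bookkeeping to state precisely.
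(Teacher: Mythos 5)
Your argument is correct and rests on essentially the same ingredients as the paper's proof: the characterization of $\eta^x$ as the supremum of OTM halting times, OTM searches through the levels of the $L[x]$-hierarchy, and the countability of every set in $L_{\eta^x}[x]$. The only organizational difference is that you prove admissibility contrapositively via a $\Sigma_1$-cofinal map from $\omega$ (whose parameter you should absorb via an OTM-computable real code, using the preceding lemma) and prove cofinality of the $x$-admissibles directly by a next-admissible search, whereas the paper verifies $\Delta_0$-collection directly and excludes a largest admissible by contradiction; the bookkeeping you defer (limit detection for the $\omega$-loop, and the fact that producing a code for $L_\gamma[x]$ takes at least $\gamma$ steps, which is what forces $\gamma<\eta^x$) is exactly what the paper also leaves implicit.
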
 

\begin{proof} To show that $\eta^x$ is $x$-admissible, it suffices to prove $\Delta_0$-collection in $L_{\eta^x}[x]$. Suppose that $y\in L_{\eta^x}[x]$ and that $R\subseteq y\times L_{\eta^x}[x]$ is
 $\Delta_0$-definable over $L_{\eta^x}[x]$ such that for every $u\in y$ there is some $v\in L_{\eta^y}[y]$ with $(u,v)\in R$. Let $P$ search on input $u\in y$ for the $L$-least $v$ with $(u,v)\in R$. 
The previous lemma implies that $\eta^y\leq\eta^x$. If we apply $P$ successively to all $z\in y$ and halt, then the halting time is some $\gamma<\eta^x$. Hence we can collect the witnesses in $L_{\gamma}[x]$. 

Suppose that $\alpha$ is the halting time of $P$ and $x=0$. Then the Skolem hull of the empty set in $L_{\alpha+1}$ is $L_{\alpha+1}$. Then there is a surjection from $\omega$ onto $L_{\alpha+1}$
 definable over $L_{\alpha+1}$, so $L_{\alpha}$ is countable in $L_{\eta}$. 
If $\eta$ is not a limit of admissibles, then $\eta=\omega_1^{CK,x}$ for some $x\in L_{\eta}$. Since we can compute $\omega_1^{CK,x}$ from $x$ with an $OTM$, this contradicts the definition of $\eta$. 
\end{proof} 

\begin{rem} $\eta^x$ is not $\Sigma_2$-$x$-admissible, since the function$f\colon \omega\rightarrow \eta^x$ which maps every halting program to its halting time is cofinal in $\eta^x$ and $\Delta_2$-definable over $L_{\eta^x}$. 
\end{rem} 

We will show that $\eta^x=\eta$ for Cohen reals $\eta$ over $L$, using the following lemma. 

\begin{lemma}{\label{OTMhaltingtimesup}}
Suppose that $x$ is a real. Let us call an ordinal $\alpha$ \emph{$\Sigma_{1}^{x}$-fixed} if and only if there exists a $\Sigma_{1}$-statement $\phi$ in the parameter $x$ such that $\alpha$ is minimal with the property that $L_{\alpha}[x]\models\phi(x)$.
Then $\eta^{x}$ is the supremum of the $\Sigma_{1}^{x}$-fixed ordinals.
\end{lemma}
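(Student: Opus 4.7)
The plan is to establish $\eta^x=\sigma^x$, where $\sigma^x$ denotes the supremum of the $\Sigma_1^x$-fixed ordinals, by proving each inequality separately.

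For $\eta^x\leq\sigma^x$, I would argue that each $OTM$-halting time on oracle $x$ lies strictly below some $\Sigma_1^x$-fixed ordinal. Fix a program $P$ that halts on $x$ at time $\alpha$ and consider the $\Sigma_1$-statement $\phi_P(x)\equiv$ ``there exists a complete halting $P$-computation on oracle $x$''. As in the earlier lemma on $OTM$-halting times, the halting computation of $P$ on $x$ is $\Delta_1$-definable over $L_\alpha[x]$ and hence belongs to $L_{\alpha+1}[x]$, while it cannot appear in any $L_\beta[x]$ with $\beta\leq\alpha$ since the computation, as a function of length $\alpha$, codes the ordinal $\alpha$ itself. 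The minimal $\beta$ with $L_\beta[x]\models\phi_P(x)$ therefore satisfies $\alpha<\beta\leq\alpha+1$ and is $\Sigma_1^x$-fixed, giving $\alpha<\sigma^x$. Taking suprema over all $OTM$-halting times on $x$ yields $\eta^x\leq\sigma^x$.

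For $\sigma^x\leq\eta^x$, fix an ordinal $\alpha$ which is $\Sigma_1^x$-fixed, witnessed by a $\Sigma_1$-formula $\phi$ so that $\alpha$ is minimal with $L_\alpha[x]\models\phi(x)$. The plan is to design an $OTM$-program $Q$ which, on oracle $x$, enumerates canonical codes for the levels $L_\beta[x]$ of the constructible hierarchy relative to $x$ stage by stage, searches at each stage for a $\Sigma_1$-witness to $\phi(x)$ in the currently constructed level, and halts at the first success. By minimality of $\alpha$, the first success occurs exactly at stage $\alpha$, and $Q$ then halts on $x$ at some ordinal time $\gamma\geq\alpha$. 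Since $\gamma$ is an $OTM$-halting time on $x$ we have $\alpha\leq\gamma<\eta^x$, and taking suprema over all $\Sigma_1^x$-fixed ordinals gives $\sigma^x\leq\eta^x$.

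The main obstacle is the standard but technical interface between $OTM$-computations and the constructible hierarchy relative to $x$: one needs that an $OTM$ with oracle $x$ can uniformly enumerate codes for the levels $L_\beta[x]$ and decide $\Sigma_1$-satisfaction at each such level, and conversely that a halting $OTM$-computation of length $\alpha$ first appears as an element of $L_{\alpha+1}[x]$. Both are available from the $OTM$ reference \cite{OTM} together with the analysis already carried out in the preceding lemmas, so the remaining bookkeeping is routine.
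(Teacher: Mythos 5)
Your proposal is correct and follows essentially the same route as the paper: for $\sigma^x\leq\eta^x$ you enumerate codes for the levels $L_\beta[x]$ by an $OTM$ and halt upon finding a level satisfying the witnessing $\Sigma_1$-statement, and for $\eta^x\leq\sigma^x$ you use the $\Sigma_1$-statement asserting the existence of the halting computation of $P^x$, whose minimal verifying level lies strictly above the halting time. The only differences are cosmetic (your extra precision that the minimal level is exactly $\alpha+1$ is not needed, and the enumeration-and-truth-predicate machinery you invoke is Lemma \ref{enumerateL}, which the paper states afterwards and refers to with ``we will show below'').
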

\begin{proof}
First, we show that there is an $OTM$-halting time (in the oracle $x$) above every $\Sigma_{1}^{x}$-fixed ordinal: To see this, let $\alpha$ be $\Sigma_{1}^{x}$-fixed, say $\alpha$ is minimal such that $L_{\alpha}[x]\models\phi$, where $\phi$ is $\Sigma_{1}^{x}$.
 We will show below that there exists an $OTM$-program $P$ such that $P^{x}$
successively writes codes for all $L_{\alpha}[x]$ on the tape. Take such a program and check, after each step, whether the tape contains a code for some $L_{\beta}[x]$ such that $L_{\beta}[x]\models\phi$ and halt if this is the case. This program
obviously halts after at least $\alpha$ many steps, hence there is an $OTM$-halting time in the oracle $x$ which is at least $\alpha$.
For the other direction, take an $OTM$-program $P$ such that $P^{x}$ halts after $\alpha$ many steps. Hence, there exists $\beta>\alpha$ such that $L_{\beta}[x]$ contains the whole computation of $P^{x}$. This $\beta$ is minimal such that $L_{\beta}$ believes
that $P^{x}$ halts, i.e. that the computation of $P^{x}$ exists, which is a $\Sigma_{1}^{x}$-statement. Hence $\beta>\alpha$ is $\Sigma_{1}^{x}$-fixed. 
Consequently, the suprema coincide.
\end{proof} 

\begin{prop} If $x$ is Cohen generic over $L$, then $\eta^x=\eta$. 
\end{prop}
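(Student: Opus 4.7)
The plan is to apply Lemma~\ref{OTMhaltingtimesup}, which characterizes $\eta^x$ as the supremum of $\Sigma_1^x$-fixed ordinals, together with the forcing theorem for Cohen forcing over $L$. The inequality $\eta \le \eta^x$ is immediate, since any $OTM$ computation on the empty oracle is also a computation on $x$ with the same halting time, so the real task is to prove $\eta^x \le \eta$, and for this it suffices to show that every $\Sigma_1^x$-fixed ordinal lies strictly below $\eta$.

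Fix a $\Sigma_1$-formula $\phi$ and let $\alpha_x$ be least with $L_{\alpha_x}[x] \models \phi(x)$. By the forcing theorem, genericity of $x$ yields a Cohen condition $p \subseteq x$ forcing the $\Sigma_1$-statement $\exists\beta\, L_\beta[\dot x] \models \phi(\dot x)$, and a standard density argument---the set of conditions below $p$ that decide a specific witnessing $\beta$ is dense---produces some $r \subseteq x$ and an ordinal $\beta$ with $r \Vdash L_{\check\beta}[\dot x] \models \phi(\dot x)$. Since $x \supseteq r$ this gives $L_\beta[x] \models \phi(x)$ and hence $\alpha_x \le \beta$.

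The next step is to pull this bound back to the ground model $L$. Consider the $L$-formula $\chi(r) := \exists \alpha\, (r \Vdash L_{\check\alpha}[\dot x] \models \phi(\dot x))$. Since Cohen forcing is concretely definable and its forcing relation for $\Sigma_1$-formulas is itself $\Sigma_1$ over $L$, $\chi(r)$ is $\Sigma_1$, with the finite sequence $r$ treated as a natural-number parameter. Let $\gamma$ be least with $L_\gamma \models \chi(r)$; applying Lemma~\ref{OTMhaltingtimesup} with empty oracle yields $\gamma < \eta$. Any witness $\alpha^*$ for $\chi(r)$ inside $L_\gamma$ satisfies $\alpha^* < \gamma$, so $\alpha_x \le \alpha^* < \gamma < \eta$, as required.

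I expect the main technical point to be the routine verification that the Cohen forcing relation for $\Sigma_1$-formulas over $L$ is itself $\Sigma_1$-definable, together with the cosmetic observation that the natural-number parameter $r$ in $\chi(r)$ is compatible with the hypothesis of Lemma~\ref{OTMhaltingtimesup}---either by viewing $r$ as a definable constant or by existentially quantifying over such $r$ to obtain a parameter-free $\Sigma_1$-formula whose least $L$-witness remains below $\eta$.
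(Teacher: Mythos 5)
Your proof is correct and takes essentially the same route as the paper: both arguments combine the $\Sigma_1$-definability of the Cohen forcing relation over $L$ with Lemma \ref{OTMhaltingtimesup}, reflecting the $\Sigma_1$ statement ``some ordinal is forced by this condition to be a witness'' below $\eta$ and then transferring the bound back to $L[x]$ by genericity (the paper phrases this directly for halting times of a fixed program, you for arbitrary $\Sigma_1^x$-fixed ordinals, which is only a cosmetic difference). One small caveat: of your two ways of handling the parameter $r$, only hard-coding the fixed finite condition $r$ into the formula is sound, since existentially quantifying over $r$ could make the least witnessing level correspond to a condition incompatible with $x$, which would not bound $\alpha_x$.
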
 

\begin{proof} Suppose that $x$ is Cohen generic over $L$ and $P^x$ halts at time $\gamma$. Let $\varphi(y,\alpha)$ state that $P^y$ halts at time $\alpha$. 
Suppose that $\dot{x}$ is the canonical name for the Cohen real and that $p\Vdash \varphi(\dot{x},\gamma)$. Since $\varphi$ is $\Sigma_1$, the existence of some $\alpha$ with $p\Vdash \varphi(\dot{x},\alpha)$ is $\Sigma_1$, 
so this holds in $L_{\eta}$ by Lemma \ref{OTMhaltingtimesup}. 
So there is some $\alpha<\eta$ with $p\Vdash \varphi(\dot{x},\alpha)$. Then $P^x$ halts at time $\alpha<\eta$ in $L[x]$, so $\alpha=\gamma<\eta$. 
\end{proof}

\subsection{Computations without parameters} 


Natural numbers as oracles do not change Turing computability. Thus there are at least two natural generalizations of Turing computability to computations of ordinal length, with and without ordinal parameters.
 We first consider machines without ordinal parameters. 

We first show that 
in $L$ there is a non-computable real $x$ which is computable relative to all oracles in a set of measure $1$. 
Let us say that a set $c$ of ordinals \emph{codes} a transitive set $x$ if there is some 
$\gamma\in Ord$ and a bijection $f:\gamma\rightarrow x$ such that $c=\{p(\alpha,\beta)\mid  \alpha,\beta<\gamma,\  f(\alpha)\in f(\beta)\}$, where $p\colon Ord \times Ord \rightarrow Ord$ denotes G\"odel pairing. 


\begin{lemma}{\label{enumerateL}} 
\begin{enumerate} 
\item There is an $OTM$ program $P$ 
such that for every $\alpha\in Ord$, there is an ordinal $\beta$ such that the tape contents 
at time $\beta$ is the 
characteristic function of a code for $L_{\alpha}$. 
\item There is an $OTM$ program $Q$ which stops with output $1$ if and only if the tape contents at the starting time is a code for some $L_{\alpha}$, and $Q$ stops with output $0$ otherwise. 
\item There is an $OTM$ program $R$ which for an arbitrary real $x$ in the oracle, stops with output $1$ if and only if the tape contents at the starting time is a code for some $L_{\alpha}$ with $x\in L_{\alpha}$. 
\end{enumerate} 
\end{lemma}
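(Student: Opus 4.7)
The plan is to formalize the folklore fact that an OTM can simulate the internal construction of $L$ level by level, using a canonical coding scheme based on G\"odel pairing. Fix once and for all, for each ordinal $\alpha$, the canonical $<_L$-bijection $f_\alpha\colon \gamma_\alpha \to L_\alpha$ where $\gamma_\alpha$ is the $<_L$-ordertype of $L_\alpha$, and let $c_\alpha=\{p(\xi,\zeta): \xi,\zeta<\gamma_\alpha,\ f_\alpha(\xi)\in f_\alpha(\zeta)\}$ be the canonical code of $L_\alpha$. The crucial observation is that these codes are nested, $c_\alpha\subseteq c_\beta$ whenever $\alpha\leq\beta$, since $f_\beta$ is chosen to extend $f_\alpha$. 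This monotonicity is what makes the $\liminf$ behavior of the OTM at limit stages produce exactly the correct code.

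For (1), the program $P$ maintains on its tape at appropriate times the characteristic function of $c_\alpha$. At successor stage $\alpha+1$, with $c_\alpha$ present, $P$ iterates through all pairs $(\varphi,\vec{a})$ where $\varphi$ is a formula (coded by a natural number) and $\vec{a}$ is a finite tuple of ordinals below $\gamma_\alpha$, evaluates the truth value of $L_\alpha\models\varphi[f_\alpha(\vec{a})]$, and thereby produces the new definable subsets of $L_\alpha$ in the canonical $<_L$-order, extending $c_\alpha$ to $c_{\alpha+1}$. At limit stages $\lambda$, the monotonicity of $\alpha\mapsto c_\alpha$ forces the tape (under $\liminf$) to display $c_\lambda=\bigcup_{\alpha<\lambda} c_\alpha$, which is exactly the canonical code of $L_\lambda$.

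For (2), program $Q$ first verifies syntactically that the input $c$ has the form $c\subseteq\{p(\xi,\zeta):\xi,\zeta<\gamma\}$ for some ordinal $\gamma$, and that the binary relation $E$ on $\gamma$ encoded by $c$ is extensional and wellfounded (both OTM-decidable, the latter by transfinite recursion along $E$). It then computes the Mostowski collapse of $(\gamma,E)$, obtaining a transitive set $T$, and reads off the ordinal height $\alpha$ of $T$. Finally, $Q$ runs $P$ from (1) in parallel until the tape displays $c_\alpha$, compares the Mostowski collapse $T$ with $L_\alpha$ (now explicitly on hand), and outputs $1$ if they coincide and $0$ otherwise. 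For (3), program $R$ first invokes $Q$ on its tape content; if $Q$ outputs $0$, so does $R$. Otherwise $R$ uses $c$ to enumerate the elements of the transitive collapse $L_\alpha$, selects those that are subsets of $\omega$, and compares each bit-by-bit with the oracle $x$, outputting $1$ on a match and $0$ once the enumeration of $L_\alpha$ is exhausted.

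The main technical obstacle lies in (1): making the satisfaction predicate $L_\alpha\models\varphi[\vec{b}]$ genuinely OTM-computable from $c_\alpha$. This is handled by an inner OTM subroutine that recurses on formula complexity, using $c_\alpha$ to enumerate bounded quantifier ranges and scratch regions of the tape to store intermediate truth values; one must check that this subroutine terminates and that its scratch usage does not interfere with the tape region holding $c_\alpha$. Once this is in place, parts (2) and (3) reduce to routine parallel simulation, Mostowski collapse, and bitwise comparison, so the remainder of the argument is essentially bookkeeping.
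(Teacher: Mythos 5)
Your proposal is correct in outline but takes a genuinely different route from the paper, mainly in part (1). You rebuild the $L$-hierarchy directly on the machine, maintaining canonical nested codes $c_\alpha$ whose monotonicity makes the $\liminf$ rule produce $c_\lambda$ at limits; this is essentially a self-contained reproof of the Koepke-style fact that an $OTM$ can compute codes for the $L_\alpha$'s together with the bounded satisfaction relation, and its correctness hinges on exactly the bookkeeping you flag (keeping scratch work off the code track so that cells outside $\bigcup_{\alpha<\lambda}c_\alpha$ are $0$ cofinally often, and arranging state/head behaviour so the construction resumes correctly at limit stages). The paper instead gets (1) almost for free from the cited theorem that a set of ordinals is $OTM$-computable from finitely many ordinal parameters if and only if it is constructible \cite{OTM}: it dovetails all programs $P_m$ with all finite tuples of ordinal parameters, running each for boundedly many steps, so codes for every $L_\alpha$ appear on the tape without any internal construction of $L$. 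For (2) the paper checks wellfoundedness by exhaustive search and then evaluates a ``$V=L_\alpha$''-type sentence in the coded structure via the computable bounded truth predicate \cite{Ko1}, whereas you compute the height $\alpha$ of the (wellfounded, extensional) input code and compare it, via the collapse, with the canonical code of $L_\alpha$ produced by your $P$; your version trades the truth-predicate machinery for a stage-by-stage comparison and thereby sidesteps any absoluteness issues about recognizing levels of $L$ inside an arbitrary coded transitive set, but it only works because your $P$ produces the levels in increasing order at recognizable stages. Part (3) is essentially identical in both treatments. In short: your argument buys self-containedness at the cost of redoing (and having to verify in detail) the limit-stage and satisfaction subroutines that the paper simply cites; the paper's argument is shorter but leans on the quoted results.
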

\begin{proof}
Note that $x\subseteq Ord$ is $OTM$-computable from finitely many ordinal parameters if and only if $x\in L$ \cite{OTM}. 
The program $P$ is obtained as follows. 
We enumerate all tupes $(m,\alpha_{0},....,\alpha_{n})$ with $m\in \omega$ and ordinals $\alpha_1,...,\alpha_n$. Let the $m^{th}$ $OTM$ program $P_m$ run for $\alpha_0$ many steps in the parameter $(\alpha_{0},...,\alpha_{n})$. 
This generates codes for all elements of $L$, in particular for $L_{\alpha}$ for all $\alpha\in Ord$. 

For the second claim, note that bounded truth predicates can be computed by an $OTM$ \cite{Ko1}. 
The wellfoundedness of the tape content can be tested 
by an exhaustive search. We can then check the sentence $\exists\alpha\in Ord\ V=L_{\alpha}$ by 
evaluating the bounded truth predicate. 

For the third claim, we can check whether the tape contents codes some $L_{\alpha}$ by the second claim. Whether or not $x\in L_{\alpha}$ can be checked by identifying representatives for all elements of $\omega$ in the coded structure and then
checking for each element of the coded structure whether it is equal to $x$. 
Whether some $\delta\in Ord$ codes $n\in\omega$ can be checked as follows: If $n=0$, then one runs through the code to check whether $\delta$ 
has any predecessors, e.g. whether $p(\gamma,\delta)$ belongs to the code for some $\gamma$. Then, recursively, a code for $n+1$ can be identified as having exactly the codes for $0,1,...,n$ as its predecessors. 
\end{proof}

\begin{thm}{\label{InL}} 
Suppose that $V=L$. 
There is a real $x$ and a 
co-countable set $A\subseteq {}^{\omega}2$
such that $x$ is $OTM$-computable without ordinal parameters from every $y\in A$, but $x$ is 
not $OTM$-computable without parameters. 
\end{thm}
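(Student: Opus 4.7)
The plan is to exhibit a specific real $x \in L \setminus L_\eta$, where $\eta$ is the supremum of $OTM$-halting times without oracle, and show that $x$ becomes computable once the oracle appears sufficiently late in the constructible hierarchy.

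First I would note that $\eta < \omega_1$ and that $L_\eta$ contains only countably many reals, while ${}^{\omega}2 = ({}^{\omega}2)^L$ has cardinality $\aleph_1$ under $V = L$. Hence there is a countable ordinal $\alpha_0$ such that $L_{\alpha_0} \setminus L_\eta$ contains a real; fix such an $\alpha_0$ and choose $x \in L_{\alpha_0} \setminus L_\eta$. Because the $OTM$-computable reals coincide with the elements of $L_\eta$ (by the characterisation lemma applied to $y = 0$), $x$ is not $OTM$-computable without parameters. Set $A := \{y \in {}^{\omega}2 : y \notin L_{\alpha_0}\}$; since $L_{\alpha_0}$ contains only countably many reals, $A$ is co-countable.

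The key step is to show that $x$ is $OTM$-computable from every $y \in A$. Given such $y$, let $\alpha_y$ be minimal with $y \in L_{\alpha_y}$, so $\alpha_y > \alpha_0$. Using Lemma \ref{enumerateL}, I would run the enumeration program $P$ and, after each step, apply the tester $R$ (with oracle $y$) to check whether the current tape content codes some $L_\alpha$ with $y \in L_\alpha$; the combined program halts as soon as such a stage is detected. On input $y$ this program halts, precisely when the enumeration first displays a code for $L_{\alpha_y}$. Since an $OTM$ cannot write an ordinal-indexed code for $L_\alpha$ in fewer than $\alpha$ many steps, the halting time of the search is at least $\alpha_y$. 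Hence $\eta^y \geq \alpha_y > \alpha_0$, so $x \in L_{\alpha_0} \subseteq L_{\eta^y} \subseteq L_{\eta^y}[y]$, and the characterisation lemma gives that $x$ is $OTM$-computable from $y$.

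The main obstacle I expect is the timing estimate $\eta^y \geq \alpha_y$: one must verify that the enumeration $P$ from Lemma \ref{enumerateL}(1) really takes at least $\alpha_y$ steps before producing a code for $L_{\alpha_y}$, and that interleaving $P$ with the tester $R$ does not destroy this bound. Granted this tape-length bookkeeping, the construction of the desired $x$ and co-countable $A$ is complete.
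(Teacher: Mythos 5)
Your overall architecture is sound and genuinely different from the paper's: you obtain computability of $x$ from $y$ abstractly, via the characterization lemma ($x\in L_{\eta^y}[y]$ implies $x$ is $OTM$-computable from $y$), so you only need to exhibit \emph{some} program with oracle $y$ whose halting time exceeds $\alpha_0$; since the statement does not demand a single uniform program, this is legitimate. The gap is exactly at the step you flag, and your proposed way of closing it would not work. The obstacle is not ``tape-length bookkeeping'': every relevant $L_\alpha$ is countable, so the enumeration of Lemma~\ref{enumerateL} (which simulates all programs with all ordinal parameters) may well display a \emph{real} coding $L_\alpha$, occupying only $\omega$ cells, and no estimate on the length of the displayed code yields a time bound. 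What is actually needed is a constructibility-rank fact: no set of ordinals coding $(L_\alpha,\in)$ can belong to $L_\beta$ for $\beta\leq\alpha$ (a Boolos--Putnam-style diagonalization: the reals, respectively the subsets of the code's length, of the coded structure are all definable from the code a few levels above where it lives, and one diagonalizes against them), combined with the observation that the tape content of your interleaved machine at time $\tau$ lies in $L_{\tau+\omega}$ (the oracle only influences when the test of Lemma~\ref{enumerateL}(3) is run, not which codes appear), and with the admissibility, hence additive closure, of $\eta^y$ to absorb the resulting slack. With these ingredients one does get $\eta^y>\alpha_y>\alpha_0$ and your proof can be completed; as written, however, the pivotal estimate is asserted rather than proved, and the sketch of how you would verify it points in the wrong direction.

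For comparison, the paper's proof of Theorem~\ref{InL} avoids any lower bound on halting times: it takes $\alpha$ to be the supremum of the parameter-free halting times and $A={}^{\omega}2\setminus L_{\alpha}$, so the least $\beta$ with $y\in L_\beta$ is automatically greater than $\alpha$; it then writes $\beta$ many $1$s and uses them as a yardstick to decide the parameter-free halting problem, computes $\alpha$, and outputs the $<_L$-least code $x$ of $\alpha$, whose non-computability follows from the usual halting-problem diagonal. That argument is uniform in $y$ and needs neither the characterization lemma nor the timing estimate on which your route depends (the paper does state a closely related ``halts after at least $\alpha$ many steps'' claim in Lemma~\ref{OTMhaltingtimesup}, but Theorem~\ref{InL} itself does not rest on it).
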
 
\begin{proof} 
Since there are only countably many $OTM$ programs, there are only countably many halting times without parameters, all of which are countable due to condensation in $L$. Let $\alpha<\omega_1$ denote their supremum. 
Let $A={}^{\omega}2\setminus L_{\alpha}$ and suppose that $x$ is the $<_{L}$-least real coding a well-ordering of order type $\alpha$. 
We claim that $x$ is $OTM$-computable without parameters relative to any $y\in A$. 
To see this, suppose that $P$ is a diverging $OTM$ program which writes $L_{\beta}$ on the tape for all $\beta\in Ord$ as in Lemma \ref{enumerateL}. 
We wait for the least $\beta\in Ord$ with $y\in L_{\beta}$. 
Then $x\in L_{\beta}$ and hence $\alpha<\beta$. 
We then write a sequence of $\beta$ many $1$s on the tape, succeeded by $0$s. This allows us to solve the halting problem for parameter free $OTM$s as follows. 
Whenever a program runs for $\beta$ many steps, it cannot halt, since $\beta>\alpha$. 
We compute the supremum $\alpha$ of the halting times and then search $L_{\beta}$ for the 
$L$-least code $x$ for $\alpha$. However, $x$ itself is not $OTM$-computable, as it would allow us to write a sequence of $\alpha$ many $1$s on the tape succeeded by $0$s, which allows a solution of the halting problem for parameter free $OTM$s. 
\end{proof} 

\begin{corollary}{\label{MoreInL}} Assume that $V=L$. 
\begin{enumerate} 
\item Let $h$ be a real coding the halting problem for parameter-free $OTM$s. Then $h$ is $OTM$-computable from every non-$OTM$-computable real $x$.
\item For all reals $x$ and $y$, $x$ is $OTM$-computable from $y$ or $y$ is $OTM$-computable from $x$.
\end{enumerate} 
\end{corollary}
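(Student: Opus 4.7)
The plan is to reduce both parts to the characterization established earlier in this section: a real $x$ is $OTM$-computable from $y$ if and only if $x \in L_{\eta^y}[y]$. Here $\eta = \eta^{0}$ agrees with the ordinal $\alpha$ from the proof of Theorem \ref{InL}, namely the supremum of parameter-free halting times. Combined with the enumeration machinery of Lemma \ref{enumerateL}, both parts reduce to locating reals in the $L$-hierarchy.

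For part (1), the characterization gives that a real $x$ is $OTM$-computable if and only if $x \in L_{\eta}$. So if $x$ is not $OTM$-computable, then $x \notin L_{\alpha}$, meaning $x$ lies in the set $A$ from Theorem \ref{InL}. The proof of that theorem already extracts $h$ as an intermediate step: run $P$ and use $R$ with oracle $x$ to find the least $\beta$ with $x \in L_{\beta}$, which satisfies $\beta > \alpha$; then write $\beta$-many $1$s on the tape, and for each parameter-free program $P_m$, simulate it for $\beta$ steps and report halting. Since $\beta > \alpha$, every halting program halts within this time, so this decides $h$ and shows $h$ is $OTM$-computable from $x$.

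For part (2), use the canonical well-ordering $<_L$ of the reals in $L$. Given reals $x, y$, without loss of generality $x \leq_L y$; then $x$ appears in the $L$-hierarchy no later than $y$, so $x \in L_{\rho(y)}$, where $\rho(y)$ is the least ordinal with $y \in L_{\rho(y)}$. The crucial step is to show $\eta^y \geq \rho(y)$. I would do this by exhibiting a parameter-free $OTM$ with oracle $y$ that iterates through ordinals $\gamma$, at each stage generating a code for $L_{\gamma}$ via $P$ and applying $R$ relative to $y$ to check whether $y \in L_{\gamma}$, halting at the first such $\gamma$. This procedure halts after at least $\rho(y)$ many steps, so $\eta^y \geq \rho(y)$. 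Consequently $x \in L_{\rho(y)} \subseteq L_{\eta^y} \subseteq L_{\eta^y}[y]$, and the characterization yields that $x$ is $OTM$-computable from $y$. By symmetry, if $y \leq_L x$, then $y$ is $OTM$-computable from $x$.

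The main obstacle is the bound $\eta^y \geq \rho(y)$ in part (2): one must design an explicit parameter-free oracle-$y$ $OTM$ whose halting time reaches or exceeds $\rho(y)$. The search procedure above accomplishes this, but one should verify that iterating through transfinite ordinals and invoking $P$ and $R$ at each stage stays within the parameter-free framework and really does produce a halting time of at least $\rho(y)$.
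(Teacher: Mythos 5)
Your part (1) is essentially the paper's own argument: non-computability means $x\notin L_{\eta}=L_{\alpha}$, so $x$ lies in the set $A$ of Theorem \ref{InL}, whose proof already computes the halting problem from any such oracle; the paper simply says ``follows from the previous proof.''

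Your part (2) takes a genuinely different route. The paper argues directly and algorithmically: with oracle $y$, use the strategy of Theorem \ref{InL} to find the least level $L_{\beta+1}$ containing $y$; since without loss of generality the $L$-rank of $x$ is at most that of $y$, we have $x\in L_{\beta+1}$, and $x$ is coded by a fixed natural number $n$ in the $<_L$-least code of $L_{\beta+1}$, which is hard-coded into the program; decoding yields $x$. This avoids both the characterization ``$x$ is computable from $y$ iff $x\in L_{\eta^y}[y]$'' and any lower bound on $\eta^y$. Your reduction to that characterization is legitimate, but the step you yourself flag is exactly where your write-up is incomplete: Lemma \ref{enumerateL} gives no control over the order in which codes appear nor over how long producing a code for $L_{\gamma}$ takes, so ``the procedure halts after at least $\rho(y)$ many steps'' does not follow from the description alone --- in principle the enumeration could place a short code of some high $L_{\gamma}\ni y$ on the tape early. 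Two ways to close this. First, the candidate codes are produced without consulting the oracle, so the tape content at time $t$ lies in $L_{t+\omega}$; from any code of an $L_{\gamma}$ with $y\in L_{\gamma}$ one recovers $y$ arithmetically, hence $\rho(y)\leq t+\omega\cdot 2$; since $\eta^y$ strictly exceeds every halting time and is $y$-admissible, this gives $\rho(y)<\eta^y$. Second, and more cleanly, skip the machine and apply Lemma \ref{OTMhaltingtimesup} relativized to $y$: as $V=L$, the $\Sigma_1$ statement (in parameter $y$) ``there is an ordinal $\gamma$ with $y\in L_{\gamma}$'' holds in some $L_{\alpha}[y]$, and the least such $\alpha$ exceeds $\rho(y)$ because the witnessing level $L_{\gamma}$ must be an element of $L_{\alpha}[y]$ and contain $y$; hence $\eta^y>\rho(y)$. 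With either repair your argument is correct; the paper's version buys an explicit program and independence from the halting-time analysis, while yours isolates the set-theoretic content that $x\leq_L y$ implies $x\in L_{\eta^y}[y]$.
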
 
\begin{proof}
The first claim follows from the previous proof. 
For the second claim, let $\alpha$ and $\beta$ be minimal such that $x\in L_{\alpha+1}$ and $y\in L_{\beta+1}$. Assume without loss of generality that $\beta\geq\alpha$.
Given $y$, we can, using the strategy from the proof of Theorem \ref{InL}, compute the $<_{L}$-minimal real $r$ coding
an $L_{\beta+1}$. As $x\in L_{\beta+1}$, it must be coded by some fixed natural number $n$ in $r$ which can be given to our program in advance.
It is now easy to compute $x$ from $r$. Thus $x$ is computable from $y$.
\end{proof}



It is also consistent that there is no such real $x$. 

\begin{thm}{\label{OhneParameter}} 
\begin{enumerate} 
\item Suppose that for every $x\in {}^{\omega}2$, the set of random reals over $L[x]$ has measure $1$. If $A$ has positive Lebesgue measure and $x\in {}^{\omega}2$ is $OTM$-computable without ordinal parameters from every $y\in A$, 
then $x$ is $OTM$-computable without ordinal parameters. 
\item Suppose that for every $x\in {}^{\omega}2$, the set of Cohen reals over $L[x]$ is comeager. If $A$ is nonmeager with the property of Baire and $x\in {}^{\omega}2$ is $OTM$-computable without ordinal parameters from 
every $y\in A$, then $x$ is $OTM$-computable without ordinal parameters. 
\end{enumerate} 
\end{thm}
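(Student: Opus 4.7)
The plan is to apply a mutual-genericity argument: pick two generics $y_1, y_2 \in A$, use that $x$ is $OTM$-computable from both, and exploit the standard intersection $L[y_1] \cap L[y_2] = L$ for mutually generic reals, refined down to the level $L_\eta$ so as to obtain the ``without parameters'' conclusion.

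First I would choose the two generics. For part (1), applying the hypothesis with $x = 0$ shows that random reals over $L$ have full measure, so $A$ (of positive measure) contains some $y_1$ random over $L$; applying the hypothesis with parameter $y_1$ then produces $y_2 \in A$ random over $L[y_1]$. For part (2), since $A$ is nonmeager with the Baire property, $A$ is comeager in some nonempty basic open set $U$, and intersecting $U$ with the comeager set of Cohen reals over $L$ (resp. $L[y_1]$) gives a nonempty set from which $y_1$ (and then $y_2$) can be drawn in $A$. In either case $(y_1, y_2)$ is mutually generic over $L$ for the appropriate forcing.

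Next I would show $x \in L_\eta[y_i]$ for both $i$. By the hypothesis $x = P_i^{y_i}$ for some $OTM$ programs $P_i$, so $x \in L_{\eta^{y_i}}[y_i]$ by the earlier characterisation. The proof of the preceding Cohen proposition (that $\eta^x = \eta$ for $x$ Cohen over $L$) adapts verbatim to random reals, since it only uses Lemma \ref{OTMhaltingtimesup} and the $\Sigma_1$-definability of the forcing relation for $\Sigma_1$ formulas, which holds equally for random forcing. Hence $\eta^{y_i} = \eta$, so $x \in L_\eta[y_i]$. I would then pick $\alpha < \eta$ admissible with $x \in L_\alpha[y_1] \cap L_\alpha[y_2]$, possible since $\eta$ is a limit of admissibles by the earlier lemma. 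Since $y_1$ is generic over $L$ (hence over $L_\alpha$) and $y_2$ is generic over $L[y_1]$ (hence over $L_\alpha[y_1]$), the pair $(y_1, y_2)$ is mutually generic over $L_\alpha$.

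The classical mutual-genericity argument then applies: take names $\tau_1, \tau_2 \in L_\alpha$ for $x$, a product condition $(p_1, p_2) \in L_\alpha$ with $(p_1, p_2) \Vdash \tau_1[\dot{y}_1] = \tau_2[\dot{y}_2]$, and recover each bit $x(n)$ as the unique value $\epsilon$ forced by some (any) extension of $p_1$ deciding $\tau_1[\dot{y}_1](n)$, with uniqueness following because a disagreement between a $q_1 \leq p_1$ and a $q_2 \leq p_2$ on this bit would be forbidden by $(p_1,p_2)$. This yields $x \in L_\alpha \subseteq L_\eta$, and the earlier characterisation concludes that $x$ is $OTM$-computable without parameters.

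The hard part will be verifying the mutual-genericity intersection step at the admissible level $L_\alpha$ rather than globally over $L$: one must check that names for reals in $L_\alpha[y_i]$ really live in $L_\alpha$, that the product posets and their decoding fit inside the admissible structure $L_\alpha$, and that the bit-by-bit reconstruction is uniform enough to be carried out using only $\Delta_0$-collection available in admissibles. These are routine for Cohen forcing (poset of rank $\omega$, real names of rank $\omega\cdot 2$) and for random forcing using Borel codes in $L_\alpha$, but care is needed to stay inside $L_\alpha$ throughout.
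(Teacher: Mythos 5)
Your part (2) (the category case) is essentially sound, and it is a genuinely different route from the paper's: the paper does not force at all here, but instead observes that the category (and measure) quantifier preserves $\Sigma^1_2$ (Kechris), which applies because the hypothesis is equivalent to the Baire property (measurability) of $\boldsymbol\Sigma^1_2$ sets; after normalizing with the density/Baire argument one gets that $\{x\}$ is $\Sigma^1_2$, hence $\Delta^1_2$, and $\Delta^1_2$ singletons are $OTM$-computable without parameters. Your Cohen argument instead mirrors the machinery the paper develops later for $ITTM$s (Lemmas \ref{comeagergen}, \ref{mutgen}, \ref{mutgensect} over provident levels $L_\alpha$), together with $\eta^y=\eta$ for Cohen generics; that works, at the cost of having to verify the Mathias-style forcing apparatus over $L_\eta$, which the descriptive-set-theoretic proof avoids.

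Part (1), however, has a genuine gap. First, successive random reals are \emph{not} mutually generic: if $y_1$ is random over $L$ and $y_2$ is random over $L[y_1]$, the pair is generic for the two-step iteration, not for the product (the paper cites \cite[Lemma 3.2.8, Theorem 3.2.11]{BJ} for exactly this failure). So the ``classical mutual-genericity argument'' with a product condition $(p_1,p_2)$ simply does not apply; the paper has to prove the weaker intersection statement $M[x]\cap M[y]=M$ separately (Lemma \ref{successive random reals}), by a nontrivial argument using a product of $(\kappa^+)$ many random forcings, c.c.c./cardinality counting, and the isomorphism theorem for measure algebras --- an argument that needs $M$ to satisfy a substantial fragment of $ZFC$ and does not transfer to an admissible level $L_\alpha$ with $\alpha<\eta$. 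Second, your claim that the proposition $\eta^x=\eta$ for Cohen generics ``adapts verbatim'' to random reals is unsubstantiated: random forcing over $L$ is not a forcing whose conditions and forcing relation live inside $L_\eta$ (its conditions are positive-measure Borel codes, cofinal in the reals of $L$, most of which lie outside $L_\eta$), and this is precisely the obstacle the authors themselves flag as open in the $ITTM$ section (``the difficulty is that random forcing in $L_\alpha$ is a proper class''). Third, even if you invoke Lemma \ref{successive random reals} with $M=L$, you only get $x\in L$, i.e.\ $OTM$-computability \emph{with} ordinal parameters; the parameter-free conclusion needs the intersection pinned down at level $\eta$ (via $\eta^{y_i}=\eta$ and an intersection lemma over $L_\eta$ for random forcing), and none of that is established in your sketch. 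The paper's actual proof of part (1) sidesteps all of this: measurability of $\boldsymbol\Sigma^1_2$ sets makes the measure quantifier preserve $\Sigma^1_2$, the Lebesgue density theorem normalizes the relative measure, $\{x\}$ becomes $\Sigma^1_2$ and hence $\Delta^1_2$, and such reals are $OTM$-computable without parameters.
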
 

\begin{proof} Suppose that for every $x\in {}^{\omega}2$, the set of random reals over $L[x]$ has measure $1$, and that $x\in {}^{\omega}2$ is $OTM$-computable without ordinal parameters from every $y\in A$. 
If $B\subseteq {}^{\omega}2\times {}^{\omega}2$ is $\Sigma^1_2$ and $q\in \mathbb{Q}$, then the set $\{x\in {}^{\omega}\omega \mid \mu(B_x)> q\}$ is $\Sigma^1_2$, by the proof of \cite[Theorem 2.2.3]{Ke}. Note that as stated, this proof 
uses projective determinacy, but Lebesgue measurability of $\bf\Sigma^1_2$ sets is sufficient for the application of \cite[Corollary 2.2.2]{Ke}. 

Now suppose that $\mu(A)>0$ and for every $y\in A$, there is an $OTM$ $P$ such that $P^y$ computes $x$. Then $\{y\mid P^y=x\}$ is provably $\Delta^1_2$ and hence measurable by \cite[Exercise 14.4]{Kanamori}. 
Since there are countably many programs, $\mu(\{y\mid P^y=x\})>0$ for some program $P$. There is a basic open set $U$ such that the relative measure of $\{y\mid P^y=x\}$ in $U$ is $>0.5$ by the Lebesgue density theorem. 

We can assume without loss of generality that $U={}^{\omega}2$. 
Then $\{x\}=\{y\mid  \mu(\{z\mid y=P^z\})>0.5\}$, so $\{x\}$ is $\Sigma^1_2$ and thus easily $\Delta^1_2$ . Note that a set $A$ of reals is $OTM$-computable iff it is $\Delta^{1}_{2}$ by Corollary $3.11$ of \cite{Se}. 
It follows from the discussion in the beginning of this section that $x$ is $OTM$-computable. 

The proof of the second claim is analogous. 
\end{proof} 

\begin{remark} \label{remark pfOTMindependence} 
\begin{enumerate} 
\item The statement that for every real $x$, the set of random reals over $L[x]$ has measure $1$ is equivalent to the statement that every $\bf\Sigma^1_2$ set is Lebesgue measurable \cite[Theorem 4.4]{Ik}. 
This follows from $\omega_1^{L[x]}<\omega_1$ for all reals $x$ by \cite[Corollary 14.3]{Kanamori}. 
\item The statement that for every real $x$, the set of Cohen reals over $L[x]$ is comeager is equivalent to the statement that every $\bf\Sigma^1_2$ set hast the property of Baire \cite[Theorem 4.4]{Ik}. 
This follows from $\omega_1^{L[x]}<\omega_1$ for all reals $x$ by \cite[Corollary 14.3]{Kanamori}. 
\end{enumerate} 
\end{remark} 

It follows from Theorems \ref{InL} and Theorem \ref{OhneParameter} that 
$ZFC$ does not decide whether there is a real $x$ which is not $OTM$-computable and a Borel set $A\subseteq{}^{\omega}2$ which is nonmeager or has positive measure such that $x$ is $OTM$-computable from every element of $A$. 





\subsection{Computations with real parameters} 

We will see below that, for most machine concepts of transfinite computability, computability with positive probability relative to a random oracle does not exceed plain computability. 
Since parameter-free $OTM$-computation
provides a natural formalization of the intuitive idea of a transfinite construction procedure, this 
intrinsically motivates the consideration of the statement that every real $x$ which is $OTM$-computable relative to all reals $y$
from some set $A$ with $\mu(A)>0$ is $OTM$-computable in the empty oracle. We will abbreviate this axiom 
by $Z(0)$. 
Similarly, for an arbitrary real $x$, we denote by $Z(x)$ the statement that every real which is $OTM$-computable relative to all reals $x\oplus y$ for all $y\in A$ with $\mu(A)>0$ is $OTM$-computable in the oracle $x$. 
Intuitively $\neg Z(x)$ means that $x$ contains a way of extracting new information from randomness, so we call a real $x$ with $\neg Z(x)$ an extracting real. The same intuition 
motivates the consideration of the statement $Z$ 
that no extracting reals exist. 

We easily obtain similar results as above for computability relative to real oracles.

\begin{prop}
$Z$ is independent from $ZFC$.
\end{prop}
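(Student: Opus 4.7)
The plan is to read off the two halves of the independence from results already established above. For the consistency of $\neg Z$, I would work in $L$: Theorem \ref{InL} exhibits a real $x$ and a co-countable (hence measure one) set $A \subseteq {}^\omega 2$ such that $x$ is $OTM$-computable without parameters from every $y \in A$, while $x$ itself is not $OTM$-computable. Reading this with the empty oracle as the side parameter directly witnesses $\neg Z(0)$, and in particular $\neg Z$.

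For the consistency of $Z$, the plan is to work in a model in which, for every real $x$, the set of reals random over $L[x]$ has measure one. By Remark \ref{remark pfOTMindependence} this is equivalent to Lebesgue measurability of every $\mathbf{\Sigma}^1_2$ set, whose consistency with ZFC is standard; a Solovay-style L\'evy collapse of an inaccessible, or any model in which $\omega_1^{L[x]} < \omega_1$ for every real $x$, suffices. In such a model I would verify the relativization of Theorem \ref{OhneParameter}(1): fix a real $x$, a positive-measure set $A$, and a real $z$ that is $OTM$-computable from $x \oplus y$ for every $y \in A$. For each $OTM$-program $P$ the set
\[
B_P := \{y \in {}^\omega 2 \mid P^{x \oplus y} = z\}
\]
is provably $\Delta^1_2$ in the parameter $x \oplus z$, hence Lebesgue measurable, and by countable additivity $\mu(B_P) > 0$ for some $P$. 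Applying the Lebesgue density theorem and passing to a suitable basic open neighbourhood, one may assume $\mu(B_P) > \tfrac12$. Since two distinct candidate outputs would give disjoint subsets each of measure exceeding $\tfrac12$, the singleton $\{z\}$ coincides with
\[
\{w \in {}^\omega 2 \mid \mu(\{y \mid P^{x \oplus y} = w\}) > \tfrac12\},
\]
which is $\Sigma^1_2$ in $x$ and therefore $\Delta^1_2(x)$. By the equivalence recalled at the start of this section, $z$ is then $OTM$-computable from $x$. Thus $Z(x)$ holds for every real $x$, so $Z$ holds.

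The only real subtlety is expository: one must check that the definability and density-point arguments of Theorem \ref{OhneParameter} are stable under carrying the fixed side parameter $x$. This is pure bookkeeping, and no genuinely new idea enters beyond what is already present in Theorem \ref{InL}, Theorem \ref{OhneParameter}, and Remark \ref{remark pfOTMindependence}.
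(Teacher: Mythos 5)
Your proposal is correct and follows essentially the same route as the paper: $\neg Z$ is witnessed in $L$ via Theorem \ref{InL} (failure of $Z(0)$), and $Z$ holds in any model where for every real $x$ the randoms over $L[x]$ have measure one, by running the majority-vote/density argument of Theorem \ref{OhneParameter} with the side parameter $x$ carried along, with consistency supplied by Remark \ref{remark pfOTMindependence}. The only difference is that you spell out the relativized version of Theorem \ref{OhneParameter} explicitly, which the paper leaves implicit.
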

\begin{proof}
The failure of $Z(0)$ implies the failure of $Z$, so $Z$ fails in $L$ by Theorem \ref{InL}. 
On the other hand, the proof of Theorem \ref{OhneParameter} shows that $Z$ holds if for every real $x$, the set of random reals over $L[x]$ has measure $1$. 
This is consistent by Remark \ref{remark pfOTMindependence}. 
\end{proof}

As a consequence of $Z$, 
the universe $V$ cannot be too close to $L$. 

\begin{prop}
$ZFC+Z$ 
implies that $V\neq L[x]$ for all reals $x$. 
\end{prop}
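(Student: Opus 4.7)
The plan is to relativize the construction from the proof of Theorem~\ref{InL} to show that if $V=L[x]$ for some real $x$, then $\neg Z(x)$ holds, so $x$ is an extracting real and $Z$ fails.

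First I would verify that $\eta^x$ is countable in $V=L[x]$. If $P^x$ halts at time $\alpha$, then the $x$-relativization of the argument from the first unnamed lemma of this section shows that the hull of $\{x\}$ in $L_{\alpha+1}[x]$ is all of $L_{\alpha+1}[x]$, hence a surjection $\omega\to L_{\alpha+1}[x]$ is definable over $L_{\alpha+1}[x]$ and $\alpha$ is countable in $L_{\alpha+2}[x]\subseteq V$. Since there are only countably many $OTM$-programs, $\eta^x$ is a countable supremum of countable ordinals, hence countable in $V$; in particular $L_{\eta^x}[x]$ is countable in $V$, so the set $A := {}^{\omega}2 \setminus L_{\eta^x}[x]$ is co-countable and has $\mu(A)=1$.

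Next, let $z$ be the $<_{L[x]}$-least real coding a wellorder of order type $\eta^x$, which exists in $L[x]=V$ by countability of $\eta^x$. I would show $z$ is $OTM$-computable from $x\oplus y$ for every $y\in A$: using the $x$-relativization of the diverging program from Lemma~\ref{enumerateL}(1), successively write codes for $L_\alpha[x]$ on the tape and wait for some $L_\beta[x]$ containing $y$; since $y\notin L_{\eta^x}[x]$, necessarily $\beta>\eta^x$. Inside this code for $L_\beta[x]$, simulate all $OTM$-programs in the oracle $x$ to read off their halting times, take the supremum to recover $\eta^x$, and extract the $<_{L[x]}$-least code for a wellorder of that length. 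Conversely, $z$ is not $OTM$-computable from $x$ alone, since given $z$ one could mark off $\eta^x$ many tape cells and use them as a clock to decide the halting problem for $OTM$s in oracle $x$ from $x$, contradicting a standard diagonal argument. Hence $z$ witnesses $\neg Z(x)$, so $\neg Z$ holds, contradicting the assumption.

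The main work lies in checking that the relativizations (countability of halting times, definability of the Skolem hull, and $OTM$-enumeration of $L[x]$ in oracle $x$) go through cleanly; the overall structure is a direct analogue of Theorem~\ref{InL}.
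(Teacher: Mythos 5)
Your argument is correct and follows essentially the same route as the paper: the paper's proof also relativizes Theorem~\ref{InL} to show that $Z(x)$ fails in $L[x]$, using a diverging program that enumerates the $L[x]$-hierarchy, the countability of the supremum $\eta^x$ (called $\sigma^x$ there) of oracle-$x$ halting times, and the co-countable (hence measure one) set of reals $y$ appearing only above that supremum. The only cosmetic difference is your choice of witness: you extract the $<_{L[x]}$-least code for $\eta^x$ (as in Theorem~\ref{InL}), while the paper directly computes the halting set for oracle-$x$ $OTM$s; both are non-computable from $x$ by the same diagonal argument.
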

\begin{proof}
It suffices to show that $Z(x)$ fails in $L[x]$. To see this, we follow the proof of Theorem \ref{InL} above and obtain a (non-halting) $OTM$-program $P$ such
that $P^{x}$ writes $L[x]$ on the tape. Let $\sigma^{x}$ be the supremum of the halting times of $OTM$s in the oracle $x$. Then for all but countably many reals $y$ in $L[x]$, the smallest $\beta$ such that
$y\in L_{\beta+1}[x]$ will be larger than $\sigma^{x}$ by a standard fine structural argument (note that, as $x$ is just a real, condensation holds in the $L[x]$-hierarchy). Now suppose that $(P_{i})_{i\in\omega}$ is a computable enumeration of the $OTM$-programs and proceed as follows. Given $x\oplus y$ in the oracle, use $P^{x}$ to enumerate $L[x]$ until some $L_{\beta}[x]$ is found that contains $y$. Then compute a
set $H^{x}_{y}\subseteq\omega$ by letting $P_{i}^{x}$ run for $\beta$ many steps and outputting $1$ if $P_{i}^{x}$ stops before time $\beta$ and $0$, otherwise. When $\beta>\sigma^{x}$, then $H^{x}_{y}$ will
just be the halting number for $OTM$-programs in the oracle $x$, which is not $OTM$-computable in the oracle $x$. As we observed above that the $\beta$ we find will be $>\sigma^{x}$ for all but countably many
 reals $y$, this procedure computes the halting number for $OTM$s in the oracle $x$ relative to $x\oplus y$ for all but countable many $y$, and hence a set of $y$ of measure $1$, contradicting $Z(x)$.
\end{proof}

Since $x^{\#}$ exists in $L[x^{\#}]$, the existence of $x^{\#}$ does not imply $Z$. However, the existence of $x^{\#}$ for all reals $x$ implies that $\omega_1^{L[x]}<\omega_1$ for all reals $x$ and hence $Z$ by Remark  \ref{remark pfOTMindependence} . 

\begin{question} 
\begin{enumerate} 
\item Is it consistent that $Z(0)$ holds while $Z$ fails? 
\item Is it consistent that $Z(0)$ holds in $L[x]$ for some real $x$? 
\item Does $Z$ imply that there are random reals over $L$? 
\end{enumerate} 
\end{question}

\subsection{Computations with ordinal parameters} 

In analogy with Turing machines, where arbitrary natural numbers are allowed as oracles, we can allow ordinals as oracles as in \cite{KoSe}. 
For this type of computations, a real $x$ is computable from a real $y$ if and only if there exist an $OTM$-program $P$ and finitely many ordinals $\alpha_{0},...,\alpha_{n}$ 
such that $P$ eventually stops with $x$ written on the tape, when run in the oracle $y$ with parameters $\alpha_{0},...,\alpha_{n}$. The computability strength corresponds to constructibility. 
We obtain the following fact by a straightforward relativization of the proof of \cite{KoSe}. 

\begin{lemma}
A real $x$ is $OTM$-computable from $y$ with ordinal parameters if and only if $x\in L[y]$.
\end{lemma}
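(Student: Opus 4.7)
My plan is to prove both implications by relativizing to $y$ the standard characterization of $OTM$-computability with ordinal parameters as constructibility, as in \cite{KoSe}.

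For the ``if'' direction, suppose $x\in L[y]$ and fix $\alpha\in Ord$ least with $x\in L_{\alpha+1}[y]$. I would adapt Lemma \ref{enumerateL} to the relativized setting: there is an $OTM$-program $P$ which, given $y$ in the oracle, successively writes codes for $L_{\beta}[y]$ for every $\beta\in Ord$ on the tape. The construction is as in the unrelativized case: enumerate all tuples $(m,\beta_{0},\dots,\beta_{n})$ and, using the oracle $y$, simulate the $m$-th $OTM$ in those ordinal parameters for $\beta_{0}$ many steps relative to $y$. Since $x\subseteq Ord$ is $OTM$-computable in $y$ from finitely many ordinal parameters iff $x\in L[y]$ by the relativization of the basic result in \cite{OTM}, every element of $L[y]$ is produced along the way. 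Using the parameter $\alpha$, we halt the simulation once a code for $L_{\alpha+1}[y]$ has been written. A natural number parameter $n$ (hardcoded into the program) identifies $x$ as the $n$-th real in this code, and we can then extract the bits of $x$ from the code and write them to the output tape.

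For the ``only if'' direction, suppose $P$ is an $OTM$-program that halts on oracle $y$ with ordinal parameters $\alpha_{0},\dots,\alpha_{n}$ and outputs $x$. The configuration at each stage $\gamma$ of the computation is uniformly definable over $L_{\gamma}[y,\alpha_{0},\dots,\alpha_{n}]=L_{\gamma}[y]$ (since the finitely many ordinal parameters are already in $L_{\gamma}[y]$ for large enough $\gamma$). Hence the entire halting computation is an element of $L_{\beta}[y]$ for some sufficiently large $\beta$, and so its output $x$ lies in $L[y]$.

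The only mildly delicate point is verifying that the $OTM$ simulation used to enumerate $L[y]$ really is absolute to the constructible hierarchy over $y$, so that both directions mesh; this is exactly what the proof in \cite{KoSe} establishes in the unrelativized setting, and the relativization is mechanical since $y$ is just an additional parameter that every level of the hierarchy can access through the oracle. No new ideas are needed beyond this relativization, which is why the statement is given as a straightforward consequence of \cite{KoSe}.
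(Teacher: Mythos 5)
Your proposal is correct and matches the paper, which offers no proof at all beyond the remark that the lemma is a straightforward relativization of Koepke's characterization of $OTM$-computability with ordinal parameters as constructibility; you simply spell out that relativization (enumerating codes for the levels $L_{\beta}[y]$ using the oracle, and conversely observing that a halting computation with oracle $y$ lies in some $L_{\beta}[y]$), exactly as in the unrelativized Lemma \ref{enumerateL}. One small repair: since $L_{\alpha+1}[y]$ may be uncountable, the position of $x$ in the produced code is in general an ordinal rather than a natural number $n$, but as ordinal parameters are permitted this index can simply be passed as an additional parameter.
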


We aim to characterize the models of set theory where random oracles cannot add information, i.e. where $OTM$-computability with ordinal parameters from all oracles in a set of positive measure implies 
$OTM$-computability with ordinal parameters in the empty oracle. 
Trivially, $L$ has this property. 
Note that if ${}^{\omega}2\not\subseteq L$ and the set of constructible reals is measurable, then it has measure $0$. This follows from the fact that we can partition $\omega$ into a constructible sequence of 
disjoint infinite sets and translate ${}^{\omega}2\cap L$ by some $a\in {}^{\omega}2\setminus L$ separately on each set. 

If ${}^{\omega}2\cap L$ is not measurable, then every set of reals of positive measure contains a real in $L$ and this real is $OTM$-computable with ordinal parameters. 
Many forcings such as 
random forcing and Sacks forcing preserve outer measure, so that in the generic extension the set of ground model reals is not measurable. Such extensions of $L$ also have the required property. 

We now consider the case that ${}^{\omega}2$ has measure $0$. 
Note that the statement that a code $c\in {}^{\omega}2$ for a Borel subset of ${}^{\omega}2$ codes a measure $1$ set is absolute between transitive models of $ZFC$ containing $c$ by \cite[Lemma 26.1]{Je}. 
This implies that for every generic filter $g$ over $M$ and every random real $x$ over $M[g]$, $x$ is random over $M$. 
The random reals appearing in a two-step iteration of random forcing are not mutually random generic by \cite[Lemma 3.2.8, Theorem 3.2.11]{BJ}. 
However, the next lemma is sufficient for our application. 

\begin{lemma}\label{successive random reals} Suppose that $M$ is a model of $ZFC$. Suppose that $x$ is random over $M$ and $y$ is random over $M[x]$. Then $M[x]\cap M[y]
=M$. 
\end{lemma}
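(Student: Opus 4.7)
My plan is to prove $M[x] \cap M[y] \subseteq M$ (the reverse inclusion is trivial), handling first the case of reals and then noting that the extension to arbitrary sets is routine via coding and the ccc of random forcing.

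So suppose $z \in M[x] \cap M[y]$ is a real. Since $z \in M[y]$ and $y$ is random over $M$, I will use the continuous reading of names for random forcing to obtain a Borel function $f \colon 2^\omega \to 2^\omega$ with code in $M$ such that $z = f(y)$. I then consider the pushforward measure $\nu = f_\ast \mu$ on $2^\omega$, a Borel probability measure coded in $M$, and let $W \in M$ be the (at most countable) set of atoms of $\nu$ as computed in $M$; note that $W \subseteq M$.

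The crux of the argument is to show that the set of atoms of $\nu$ computed in $M[x]$ is still $W$. For each rational $\varepsilon > 0$, I form the tree
\[
T_\varepsilon := \{s \in 2^{<\omega} : \nu([s]) \geq \varepsilon\} \in M,
\]
where $[s]$ denotes the cylinder set above $s$. In any transitive model $N \supseteq M$, an element $u \in N \cap 2^\omega$ satisfies $\nu(\{u\}) \geq \varepsilon$ iff $u \in [T_\varepsilon]^N$, since $\nu(\{u\}) = \lim_n \nu([u \restriction n])$ and each cylinder measure is absolute between transitive models containing $\nu$. In $M$, the set $[T_\varepsilon]^M$ is finite, containing at most $1/\varepsilon$ branches (the atoms of mass $\geq \varepsilon$ contribute disjointly to the unit mass). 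Any hypothetical new branch $u \in [T_\varepsilon]^{M[x]} \setminus [T_\varepsilon]^M$ must diverge from each of these finitely many $M$-branches at some finite level $n^\ast$; then the subtree of $T_\varepsilon$ above $u\restriction n^\ast$ would be wellfounded in $M$ but possess an infinite branch in $M[x]$, contradicting absoluteness of wellfoundedness between transitive models. Hence $[T_\varepsilon]^{M[x]} = [T_\varepsilon]^M$ for every rational $\varepsilon > 0$, and taking the union over $\varepsilon$ shows that the atoms of $\nu$ in $M[x]$ are exactly $W$.

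To conclude: since $y$ is random over $M[x]$ and $f^{-1}(\{z\})$ is a Borel set with code in $M[x]$ (as $f \in M$ and $z \in M[x]$) that contains $y$, it cannot be null in $M[x]$. Hence $\nu(\{z\}) = \mu(f^{-1}(\{z\})) > 0$ in $M[x]$, so $z$ is an atom of $\nu$ in $M[x]$ and therefore $z \in W \subseteq M$, completing the reals case. The main obstacle is precisely the absoluteness of the set of atoms of a Borel measure between $M$ and its outer models; once this is handled by the pruning-plus-wellfoundedness-absoluteness trick, the desired conclusion follows immediately from the randomness of $y$ over $M[x]$.
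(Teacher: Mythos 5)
Your argument is correct for reals, and it takes a genuinely different route from the paper. The paper argues globally: assuming no condition of the two-step iteration forces $\dot{M}[\dot{x}]\cap\dot{M}[\dot{y}]=\dot{M}$, it adds a finite support product of $(\kappa^+)^M$ random reals, extracts $\kappa^+$ pairwise distinct new elements of $M[y]$, and derives a cardinality contradiction using c.c.c.; it then uses quasi-homogeneity of the measure algebra (via the isomorphism theorem for Borel measures) to pass from ``some condition forces it'' to arbitrary successive random reals. Your proof is local and direct: Borel reading of names over $M$ gives $z=f(y)$, the atoms of the pushforward measure $f_{*}\mu$ of mass $\geq\varepsilon$ are exactly the branches of the tree $T_\varepsilon\in M$, and since $[T_\varepsilon]^M$ is finite, Mostowski absoluteness of wellfoundedness (applied to the subtree above $u\upharpoonright n^{*}$) shows no new branches appear in $M[x]$; randomness of $y$ over $M[x]$ then forces $z$ to be such an atom, hence $z\in M$. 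Notably your argument never uses that $x$ is random over $M$ -- only that $M[x]$ is a transitive ZFC extension of $M$ over which $y$ is random -- so it proves a stronger local statement, and it avoids the paper's counting and homogeneity machinery. The one soft spot is your closing claim that the extension from reals to arbitrary sets is ``routine via coding and the ccc'': for a set of ordinals $A\in M[x]\cap M[y]$ the relevant Boolean values give an intersection of possibly uncountably many Borel sets, so c.c.c.\ alone does not suffice; the honest route is the intermediate-model theorem for random forcing (every intermediate extension $M\subseteq M[A]\subseteq M[y]$ is generated by a single real, using that complete subalgebras of the measure algebra have countable Maharam type), after which your reals case applies. Since the paper's own proof counts the new objects against $(2^\omega)^{M[y]}$ and its application (Theorem \ref{reals constructible from many reals}) only concerns reals, this caveat affects you and the paper equally.
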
 

\begin{proof} Let $\mathbb{P}$ denote random forcing and $\dot{\mathbb{P}}$ a $\mathbb{P}$-name for random forcing. Note that $\mathbb{P}*\dot{\mathbb{P}}$ is forcing equivalent to $\mathbb{P}$ \cite[Lemma 3.2.8]{BJ}. 
Let $\dot{x},\dot{y}$ be names for the random reals added by $\mathbb{P}*\dot{\mathbb{P}}$. 

We claim that there is a condition $(p,\dot{q})\in\mathbb{P}*\dot{\mathbb{P}}$ with $(p,\dot{q})\Vdash_{\mathbb{P}*\dot{\mathbb{P}}}\dot{M}[\dot{x}]\cap \dot{M}[\dot{y}]=\dot{M}$, where $\dot{M}$ is a name for the ground model $M$.  
Otherwise $1_{\mathbb{P}}\Vdash_{\mathbb{P}*\dot{\mathbb{P}}}\dot{M}[\dot{x}]\cap \dot{M}[\dot{y}]\neq \dot{M}$. 
Let $\kappa=(2^{\omega})^M$. 
Suppose that $g$ is generic over $M$ for  a finite support product $\mathbb{P}$ of $(\kappa^+)^M$ random forcings. Note that random forcing is $\sigma$-linked by \cite[Lemma 3.1.1]{BJ} and hence Knaster. Then $\mathbb{P}$ is Knaster and hence c.c.c. by \cite[Corollary 15.16]{Je}. 

Let $(x_{\alpha})_{\alpha<(\kappa^+)^M}$ denote the sequence of random reals added by $g$. Suppose that $y$ is random over $M[g]$. 
Then $y$ is random over $M$ and over $M[x_{\alpha}]$ for all $\alpha<(\kappa^+)^M$, so $M[x_{\alpha},y]$ is a $\mathbb{P}*\dot{\mathbb{P}}$-generic extension of $M$. 
Hence there is some $y_{\alpha}\in (M[x_{\alpha}]\cap M[y])\setminus M$ for each $\alpha<(\kappa^+)^{M}$. 
Then $x_{\alpha}, x_{\beta}$ are mutually generic for all $\alpha\neq\beta$. 
This implies $M[x_{\alpha}]\cap M[x_{\beta}]=M$ by a similar argument as in Lemma \ref{mutgensect} below. 
Then $y_{\alpha}\neq y_{\beta}$ for $\alpha\neq \beta$ and 
hence $|2^{\omega}|^{M[G]}=|(2^{\omega})^{M[y]}|^{M[G]}$. But $|2^{\omega}|^{M[G]}=(\kappa^+)^{M[G]}=(\kappa^+)^M$ and $|(2^{\omega})^{M[y]}|^{M[G]}=|2^{\omega}|^{M[y]}=|2^{\omega}|^M=\kappa$, since random forcing and $\mathbb{P}$ are c.c.c. 

Suppose that $(p,\dot{q})\Vdash_{\mathbb{P}*\dot{\mathbb{P}}}\dot{M}[\dot{x}]\cap \dot{M}[\dot{y}]=\dot{M}$. 
It follows from the isomorphism theorem for Borel measures \cite[Theorem 17.41]{Ke2} that 
for every condition $r\in \mathbb{P}$, random forcing below $r$ is forcing equivalent to $\mathbb{P}$, i.e. the Boolean completions are isomorphic. 
Thus for an arbitrary random real $x$ over $V$ not necessarily below $p$, there is some condition $r\in \dot{\mathbb{P}}^{x}$ with $r\Vdash^{M[x]}_{\mathbb{P}} \dot{M}[\check{x}]\cap \dot{M}[\dot{y}]=M$. 
Then $M[x]\cap M[y]=M$ for an arbitrary random real $y$ over $M[x]$ by the same argument for $\dot{\mathbb{P}}^x$. 
\end{proof} 

\begin{thm}\label{reals constructible from many reals} 
\begin{enumerate} 
\item Suppose that for every real $x$, there is a random real over $L[x]$. If $A$ has positive measure and $x\in {}^{\omega}2$ is constructible from each $y\in A$, then $x\in L$. 
\item Suppose that for every real $x$, there is a Cohen real over $L[x]$. If $A$ is a nonmeager Borel set and $x\in {}^{\omega}2$ is constructible from each $y\in A$, then $x\in L$. 
\end{enumerate} 
\end{thm}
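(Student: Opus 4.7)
The plan is to combine Lemma \ref{successive random reals} (and its Cohen analogue) with the standard fact that if a random (respectively Cohen) real over $L[z]$ exists for every real $z$, then the set of such reals has measure $1$ (respectively is comeager).

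For (1), the hypothesis guarantees that the set of reals random over $L$ has measure $1$, so since $\mu(A)>0$ we may pick $y_1\in A$ which is random over $L$. Applying the hypothesis to $y_1$, the set of reals random over $L[y_1]$ also has measure $1$, so we may pick $y_2\in A$ which is random over $L[y_1]$. Since $x$ is constructible from each element of $A$, the preceding lemma on constructibility gives $x\in L[y_1]\cap L[y_2]$. By Lemma \ref{successive random reals} applied with $M=L$, we conclude $L[y_1]\cap L[y_2]=L$, whence $x\in L$.

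For (2), the strategy is identical but with random forcing replaced by Cohen forcing. The hypothesis implies that for every real $z$ the Cohen reals over $L[z]$ form a comeager set, so since $A$ is a nonmeager Borel set we may successively pick $y_1\in A$ Cohen over $L$ and then $y_2\in A$ Cohen over $L[y_1]$. What is then needed is the Cohen analogue of Lemma \ref{successive random reals}, namely that $L[y_1]\cap L[y_2]=L$ under these hypotheses. The proof should follow the same template: the two-step iteration of Cohen forcing is forcing-equivalent to Cohen forcing, Cohen forcing is countable (hence Knaster and homogeneous), and a finite support product of Cohen forcings is again Cohen forcing; this is all that was used in the random case, with Kuratowski--Ulam replacing Fubini where necessary.

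The main subtlety is verifying this Cohen analogue of Lemma \ref{successive random reals}, but as indicated no essentially new input beyond the template of the random case is required. The selection of $y_1$ and $y_2$ inside $A$ poses no problem: in (1), $A$ meets any measure-one set, and in (2), because $A$ is a nonmeager Borel set, its intersection with any comeager set is again nonmeager and in particular nonempty.
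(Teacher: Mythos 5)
There is a genuine gap at the very first step. You claim as a ``standard fact'' that if a random (Cohen) real over $L[z]$ exists for every real $z$, then the set of random (Cohen) reals over $L[z]$ has measure $1$ (is comeager). This is false: the existential hypothesis is equivalent to Lebesgue measurability (the Baire property) of all $\bf\Delta^1_2$ sets, whereas the full-measure/comeager statement is equivalent to the same regularity for all $\bf\Sigma^1_2$ sets (this is exactly the distinction between Theorems 4.3 and 4.4 of Ikegami cited in the paper, and it is why the hypothesis of this theorem is weaker than that of Theorem \ref{OhneParameter}). The two are not equivalent: for instance, in the random extension of $L$ there is a random real over $L[x]$ for every real $x$, but random forcing preserves outer measure, so the constructible reals retain outer measure one and the set of randoms over $L$ is very far from having measure one. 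Consequently, under the stated hypothesis you cannot simply intersect $A$ with a measure-one (comeager) set of generics to find $y_1\in A$ random over $L$ and then $y_2\in A$ random over $L[y_1]$; that selection step is exactly where the theorem's weaker hypothesis has to be exploited differently.

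The paper repairs this by localizing the forcing to $A$: assume without loss of generality that $A$ is Borel with Borel code $a$, work over $L[a]$, and use the fact that random forcing below the positive-measure condition $A$ is forcing equivalent to random forcing (isomorphism theorem for measure algebras) to convert the random real over $L[a]$ provided by the hypothesis into a random real $y$ over $L[a]$ lying below $A$, hence in $A$; then repeat over $L[a][y]$ to get $z\in A$, note that $y$ is random over $L$ and $z$ over $L[y]$ by absoluteness of ``this code codes a measure-one set,'' and conclude with Lemma \ref{successive random reals} applied to $M=L$ exactly as you do. The Cohen case is handled the same way (Cohen forcing below a condition is Cohen forcing, and $A$ is comeager in some basic open set), and there your concluding intersection claim $L[y_1]\cap L[y_2]=L$ is in fact easier than the random case, since mutually Cohen generic reals are product-generic, so no analogue of the iteration argument of Lemma \ref{successive random reals} is needed. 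The remainder of your outline (passing from ``constructible from every $y\in A$'' to $x\in L[y_1]\cap L[y_2]$ and invoking the intersection lemma) agrees with the paper; the missing idea is the Borel code plus relativization-below-the-condition trick that replaces your false measure-one claim.
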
 

\begin{proof} Since $A$ has a Borel subset with the same measure, we can assume that $A$ is Borel. Suppose that $a$ is a Borel code for $A$. 
Note that a real $y$ is random over a model $M$ if and only if $y$ is in every measure one Borel set coded in $M$. Let $y$ be random over $L[a]$ below $A$ and $z$ random over $L[a][y]$ below $A$. Such reals $y,z$ exist since 
random forcing below the condition $A$ is 
forcing equivalent to random forcing. 
Then $y,z\in A$. Moreover $y$ is random over $L$ and $z$ is random over $L[y]$ by the discussion before the previous lemma. 
Since $x$ is constructible from $y$ and from $z$ by our assumption, $x\in L$ by the previous lemma applied to $V=L$. 

The argument for Cohen forcing is similar. 
\end{proof} 

\begin{remark} 
\begin{enumerate} 	
\item After forcing with a finite support iteration of length $\omega_1$ of random forcings, there is a random real over $L[x]$ for every real $x$. The statement that for every real $x$, there is a random real over $L[x]$ is equivalent to the statement that every $\bf\Delta^1_2$ set is Lebesgue measurable \cite[Theorem 4.3]{Ik}. 
\item After forcing with a product of $\omega_1$ Cohen forcings, there is a Cohen real over $L[x]$ for every real $x$. 
The statement that for every real $x$, there is a Cohen real over $L[x]$ is equivalent to the statement that every $\bf\Delta^1_2$ set has the property of Baire \cite[Theorem 4.3]{Ik}. 
\end{enumerate} 
\end{remark} 

There is a forcing extension of $L$ such that there is a nonconstructible real $x$ which is constructible from all elements of a measure $1$ set 
\cite[Section 3]{JuShe}. 

\begin{thm}[Judah-Shelah]\label{Goldstern forcing} There is a forcing $\mathbb{P}$ in $L$ such that in any $\mathbb{P}$-generic extension of $L$, there is a measure one set $A$ such that every $x\in A$ can be constructed from every $y\in A$, but $A$ contains no constructible real. 
\end{thm}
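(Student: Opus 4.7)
The plan is to follow the Judah--Shelah construction from \cite{JuShe}, in which the forcing $\mathbb{P}$ is a tree-type forcing defined in $L$ designed to add a single real $r\in{}^\omega 2$ together with a Borel code $b\in L[r]$ for a measure-one set $A\subseteq{}^\omega 2$ consisting of reals mutually constructible from $r$. Concretely, conditions are perfect trees (or finite unions thereof) in $2^{<\omega}$ whose level measures go to $1$, and we equip them with enough bookkeeping so that the generic real $r$ can be decoded from any branch through a distinguished generic tree $T\in L[r]$. One first establishes the standard forcing properties: $\mathbb{P}$ is proper and preserves measure, so in any $\mathbb{P}$-generic extension $L[r]$ there are no new null sets coded in $L$ that become large.

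Next, one verifies the three defining properties of $A$. Unfolding the generic, $A$ is realized as the set of branches of $T$, and by construction $T\in L[r]$, so $A\subseteq L[r]$ and hence every $x\in A$ is constructible from $r$. To show that conversely $r\in L[y]$ for each $y\in A$, one uses the coding built into $T$: each node of $T$ is decorated with an initial segment of $r$ in such a way that $y$ together with $T$ (which is definable in $L[y]$ by a density argument, as every real in $A$ decides the isomorphism type of the generic tree) reconstructs $r$. This is the homogeneity step and is the heart of the construction: it is essentially a Fubini argument combined with the symmetry of the forcing under permutations of the branches of $T$. Once both directions are in place, $L[y]=L[r]$ for every $y\in A$, which gives mutual constructibility of elements of $A$.

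Finally, $\mu(A)=1$ follows from the measure-one branching structure of $T$ (i.e. the level measures of $T$ tend to $1$), which is arranged in the definition of $\mathbb{P}$ and preserved by genericity. The fact that $A$ contains no constructible real is immediate: if $x\in A\cap L$, then $r\in L[x]=L$, contradicting the fact that $r$ is $\mathbb{P}$-generic over $L$ (and in particular not in $L$, since $\mathbb{P}$ adds a new real). The main obstacle, and the reason the argument is nontrivial, is the simultaneous arrangement of the two properties ``$A$ has measure one'' and ``every $y\in A$ decodes $r$'': the coding must be robust enough that a generic branch computes the whole tree, yet sparse enough that the tree still carries full measure. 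This is precisely what the Judah--Shelah forcing accomplishes, and we refer to \cite[Section 3]{JuShe} for the detailed verification.
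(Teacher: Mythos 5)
Your opening move --- attribute the result to Judah--Shelah and invoke \cite[Section 3]{JuShe} --- matches only the first sentence of the paper's proof, which indeed notes that Blass--Shelah forcing has the required property. The paper then gives a self-contained, much shorter argument via a simplified Goldstern forcing: fix a partition of $\omega$ into blocks $I_n$ of length $\geq n$; conditions are trees whose nodes are tuples $(C_0,\dots,C_n)$ with each $C_i=2^{I_i}\setminus\{t_i\}$; the generic is a sequence $(C_n)_{n\in\omega}$; the measure-one set is the tail set $A=\{x\mid \forall^\infty n\ x\upharpoonright I_n\in C_n\}$; a density argument shows every ground-model real is punched out of infinitely many blocks, so $2^\omega\cap L$ is null and disjoint from $A$; and pure decision plus a fusion argument shows the forcing is minimal for reals, so every $y\in A$, being a new real, constructs the generic and hence every real of the extension, in particular every $x\in A$.

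Measured against either that proof or the actual Judah--Shelah argument, your sketch has two genuine gaps. First, the measure claim: if $A$ is the branch set of a single tree $T$ whose level measures tend to $1$, then $\mu(A)$ is at most the infinite product of those level measures, which is strictly less than $1$ (and positive only if the product converges); to get measure one you need the ``all but finitely many levels'' tail construction, i.e.\ a countable union of finite modifications, which your ``finite unions thereof'' does not supply. Second, and more seriously, the step ``every $y\in A$ decodes $r$'' is the whole content of the theorem and is only asserted, via vague appeals to decorated nodes, symmetry of the forcing and Fubini; the actual mechanism is a minimality-for-reals argument (using pure decision, thin a condition so that incompatible nodes force incompatible values of any name for a new real, whence the generic is the unique branch compatible with that real), combined with a separate density argument showing no ground-model real lies in $A$. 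Your derivation of ``$A$ contains no constructible real'' rests entirely on the unproved decoding property, so it has nothing to stand on; note also that properness and ``measure preservation'' are beside the point --- the forcing must make the ground-model reals null, which is the opposite of preserving their outer measure. Since all substantive verification is deferred to \cite[Section 3]{JuShe}, the proposal as written is a citation plus an inaccurate sketch rather than a proof.
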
 

\begin{proof} Blass-Shelah forcing has this property \cite[Section 3]{JuShe}. We include a much shorter proof via a simplification of a forcing of Martin Goldstern, whom we thank for allowing us to include this. 
We define a forcing $\mathbb{P}$ with the property that every new real constructs the generic real, i.e. the forcing is minimal for reals, and the set of ground model reals has measure $0$. 
Suppose that $(a_n)_{n\in\omega}$ is a strictly increasing sequence of natural numbers with $a_{n+1}-a_n\geq n$. Let $I_n=[a_n,a_{n+1})$. 
The forcing $\mathbb{P}$ consists of trees $t$ whose nodes of $t$ are of the form $(C_0,...,C_n)$ with $C_i= 2^{I_i}\setminus\{t_i\}$ for some $t_i\in 2^{I_i}$. Then $\mu(C_i)\geq 1-\frac{1}{2^i}$. Every splitting node $(C_0,...,C_n)$
 splits into $(C_0,...,C_{n+1})$ for all such $C_{n+1}$. The trees have no end nodes and cofinally many splitting nodes. The conditions are ordered by reverse inclusion. 

Suppose that $(C_n)_{n\in\omega}$ is $\mathbb{P}$-generic over $V$. 
Then $\mu(\{x\mid \forall^{\infty} n\ x\upharpoonright I_n \in C_n\})=1$.  
Let $X=\{x\mid \exists^{\infty} n\ x\upharpoonright I_n\notin C_n\}$. Then $\mu(X)=0$. 
Suppose that $x\in {}^{\omega}2\cap V$. Then for any $t\in \mathbb{P}$ with the stem $(D_0,...,D_n)$, we can find some $s\leq t$ by choosing $D_{n+1}$ with $x\upharpoonright I_{n+1}\notin D_{n+1}$, hence $(D_0,...,D_{n+1})$ forces 
that $x\upharpoonright I_{n+1}\notin \dot{C}_{n+1}$, where $\dot{C}_{n+1}$ is a name for $C_{n+1}$. This implies that $x\in X$. Thus $\mu({}^{\omega}2 \cap V)=0$. 

We claim that $\mathbb{P}$ has the pure decision property, i.e. given any $s\in \mathbb{P}$ and any sentence $\varphi$, there is some $t\leq s$ with the same stem as $s$ which decides $\varphi$. As for Sacks forcing, we enumerate the 
direct successors of the stem $t_0$ of $t$ as $u_0,...,u_n$ and choose trees $t^{i}\leq t/u_i=\{r\in t\mid u\subseteq u_i$ or $u_i\subseteq r\}$ deciding $\varphi$. Then $s=\bigcup_{i\leq n} t^{i}$ has the stem $t_0$ and decides $\varphi$. 

If $t$ forces that $\dot{x}$ is a name for a new real, we can build a subtree $s\leq t$ using the pure decision property such that at every splitting node $p$ in $s$, the parts of $\dot{x}$ decided by $s/q$ for direct successors of $p$ 
 are incompatible. This can easily be done by considering all pairs of direct successors, since the trees are finitely splitting. Then the generic real $y$ is the unique branch in $s$ which is compatible with $\dot{x}^y$ and hence is constructible from $\dot{x}^y$. 
\end{proof} 

It is independent from $ZFC$ whether there are a real $x$ and a set $X\subseteq {}^{\omega}2$ 
of positive measure such that $x$ is $OTM$-computable with parameters from each element of $X$, by Theorem \ref{reals constructible from many reals} and Theorem \ref{Goldstern forcing}. 
The same statement, but with sets of positive measure replaced by nonmeager Borel sets, is independent from $ZFC$ by Theorem \ref{reals constructible from many reals} and the following property of Laver forcing. 

\begin{thm}[Gray] Laver forcing adds a minimal real such that the set of ground model reals is meager. 
\end{thm}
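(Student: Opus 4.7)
The plan is to verify separately the two assertions about the Laver-generic real: its minimality for reals, and the meagerness of $V\cap{}^{\omega}2$ in the generic extension. Recall that Laver forcing $\mathbb{L}$ consists of trees $T\subseteq\omega^{<\omega}$ with a stem $s_T$ such that every node $t\in T$ with $s_T\subseteq t$ has infinitely many immediate successors in $T$, ordered by reverse inclusion; the generic filter determines a branch $\ell\in\omega^{\omega}$.

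First I would establish a pure decision property analogous to the one used in Theorem \ref{Goldstern forcing}: for every $T\in\mathbb{L}$ and every sentence $\varphi$, there is $T'\leq T$ with $\mathrm{stem}(T')=\mathrm{stem}(T)$ deciding $\varphi$. This is obtained by a fusion construction along splitting levels, using at each splitting node $u$ that among the infinitely many immediate successors $u^\frown i$ one may, by Ramsey-type thinning, extract an infinite subset on which the decision about $\varphi$ (recursively arranged below) is uniform, while preserving infinite branching at every splitting node of the result.

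Next, for minimality, suppose $T\Vdash\dot{x}\in{}^{\omega}2\setminus V$. Using pure decision I would build by fusion a subtree $T^{*}\leq T$ such that at every splitting node $u$ of $T^{*}$ and every pair of distinct immediate successors $u^\frown i,u^\frown j$ of $u$ in $T^{*}$, the subtrees $T^{*}/u^\frown i$ and $T^{*}/u^\frown j$ decide incompatible initial segments of $\dot{x}$. This refinement is possible because, if at some splitting node no such infinite pairwise-separating family existed, one could thin to infinitely many successors whose forced decisions on $\dot{x}$ agree coordinatewise, and iterate the construction to obtain a subtree forcing $\dot{x}$ to equal a ground model real, contradicting $\dot{x}\notin V$. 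From such $T^{*}$, the generic branch $\ell$ is reconstructible from $\dot{x}^{\ell}$ by descending $T^{*}$ and at each splitting node selecting the unique immediate successor whose forced segment of $\dot{x}$ is compatible with $\dot{x}^{\ell}$; hence $\ell\in V[\dot{x}^{\ell}]$, i.e.\ $V[\dot{x}^{\ell}]=V[\ell]$, which is minimality for reals.

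For the meagerness claim, a standard density argument shows that $\ell$ dominates every $f\in V\cap\omega^{\omega}$: given a condition $T$ and $f\in V$, one thins $T$ so that every node at position $n$ above the stem takes value exceeding $f(n)$. Consequently the closed sets $F_{k}=\{x\in\omega^{\omega}\mid\forall n\geq k\ x(n)\leq\ell(n)\}$ are nowhere dense and cover $V\cap\omega^{\omega}$, witnessing meagerness in Baire space; the analogue for ${}^{\omega}2$ follows by partitioning $\omega$ into consecutive intervals $I_{n}$ whose lengths grow rapidly in an $\ell$-determined fashion and exhibiting, for each $x\in V\cap{}^{\omega}2$, an $\ell$-computable pattern on $I_{n}$ that $x$ must avoid for almost all $n$. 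The main obstacle I anticipate is the fusion in the minimality step: one must simultaneously maintain infinite branching at every splitting node of the subtree being constructed and enforce, at every such node and every pair of surviving successors, the pairwise incompatibility of the decisions about $\dot{x}$, requiring careful level-by-level bookkeeping.
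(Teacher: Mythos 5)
Your meagerness half is essentially sound, though note that the paper itself does not reprove anything here: it simply cites Gray \cite{Gr} for minimality and \cite[Theorem 7.3.28]{BJ} for domination, and infers meagerness from domination. Your direct density argument for domination is the standard one, and you are right that meagerness of $V\cap{}^{\omega}\omega$ in Baire space does not transfer for free to $V\cap{}^{\omega}2$ in Cantor space, so the interval-pattern argument you sketch (intervals determined by $\ell$, a pattern read off from $\ell$ that each ground model real must eventually avoid, via thinning successor sets at every node above the stem) is genuinely needed and does work. Your sketch of pure decision is repairable but as stated the recursion does not ground out; the standard proof is the ``good node'' argument: if the stem were undecided, all but finitely many successors of any undecided node are undecided, the undecided nodes would contain a Laver subtree, and any extension of that subtree deciding $\varphi$ would make its own stem decided, a contradiction.

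The genuine gap is in the minimality step, which is exactly the nontrivial content of Gray's theorem and cannot be dispatched by the dichotomy you assert. You claim: either infinitely many successors of a splitting node admit stem-preserving refinements deciding pairwise incompatible initial segments of $\dot{x}$, or else one can thin so that the decisions ``agree coordinatewise'' and iterate to get a subtree forcing $\dot{x}\in V$. The hidden lemma this needs -- that below any condition forcing $\dot{x}\notin V$, stem-preserving refinements can decide incompatible segments -- is false. For example, fix $z\in V\cap{}^{\omega}2$, let $k$ be the stem length, and let $\dot{x}$ be the name that agrees with $z$ on all positions below $\ell(k)$ and codes the tail $\ell(k+1),\ell(k+2),\dots$ of the Laver real beyond $\ell(k)$: this $\dot{x}$ is forced to be new, yet every refinement keeping the stem retains arbitrarily large successor values, so it can only decide initial segments of $z$ (forcing a deviation from $z$ at position $p$ is possible only below the finitely many immediate successors with value $\leq p$). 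Gluing pieces of varying ground model reals $w_n$ on the generic intervals $[\ell(k+n),\ell(k+n+1))$ gives names for new reals for which, at every node, the stem-preserving decisions lie along a single ground model real that depends on the node; so ``the decisions cohere'' at a node does not let you iterate to a condition forcing $\dot{x}\in V$, because the coherent lines change from node to node and fusing them (or, in the other horn, separating the successors' lines rather than the decisions at the node itself) is precisely the delicate combinatorial work of Gray's proof. This is why Laver minimality is not a routine adaptation of the finitely-branching argument of Theorem \ref{Goldstern forcing}, and why the paper cites \cite{Gr} rather than reproving it; as written, your central step is an assertion of the theorem's hardest part with an incorrect justification. (Two smaller points: recovering $\ell$ from $\dot{x}^{\ell}$ also uses that $T^{*}/u^{\frown}i$ lies in the generic filter when $u^{\frown}i\subseteq\ell$, which needs the density-of-long-stems argument; and your fusion bookkeeping must allow thinning each successor set only at the stage its node is frozen.)
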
 

\begin{proof} Laver forcing is minimal \cite{Gr}. Since a Laver real dominates the ground model reals \cite[Theorem 7.3.28]{BJ}, the set of ground model reals is meager in the generic extension. 
\end{proof} 

\begin{remark} The results in this section hold verbatim for Ordinal Register Machines ($ORM$s) (introduced in \cite{Ko1}) which are identical to $OTM$s in computational strength with and without 
 ordinal parameters. This is shown in \cite{Ko2} in the case with parameters. We leave out the proof for the case without parameters, which is not hard to obtain, but technical and not very informative. 
\end{remark}

Note that in the situation of Theorem \ref{Goldstern forcing}, for any new real $x$, we can search through all $\mathbb{P}$-names $\dot{x}$ in the ground model $M$ and thin out trees as in the proof of Theorem \ref{Goldstern forcing}. For each such tree $t$, we compute the unique branch $y$ with $\dot{x}^y=x$, if it exists, and check whether it is $\mathbb{P}$-generic over $L$. Thus we have an $OTM$ program which computes a $\mathbb{P}$-generic real over $L$ from each new real. 

\begin{question} Is it consistent that there is a nonconstructible real $x$ and a Borel set $A$ of measure $1$ such that $x$ is $OTM$-computable without parameters from every $y\in A$? 
\end{question} 

More generally, we ask which combinations of the following statements are consistent (with $\mu({}^{\omega}2\cap L)=0$). If $A$ is a Borel set of positive measure (measure $1$) and $x$ is $OTM$-computable (with ordinal parameters) from each $y\in A$, then $x$ is $OTM$-computable (with ordinal parameters). 


%

\section{Infinite time Turing machines} 

$ITTM$s are the historically first machine model of transfinite computations. Roughly speaking, an $ITTM$ is a classical Turing machine with transfinite ordinal running time: Whenever the time reaches 
a limit ordinal, the tape content at each cell is the limit inferior of the earlier contents and the machine assumes a special limit state. The definitions of  $ITTM$s, writability, eventual writability and accidental writability can be found in \cite{HaLe}. 

In this section, we will show that every real $x$ which is writable (eventually writable, accidentally writable) from every real in a nonmeager Borel set is already writable (eventually writable, accidentally writable, respectively). 
%
%
%
The proofs use Cohen forcing over $L_{\alpha}$. 
A similar argument using a ranked forcing language can be found in \cite[Theorem 3.1]{We3}. 
In ongoing work, we are attempting to use a similar strategy for random forcing instead of Cohen forcing, which would lead to the analogous result for positive Lebesgue measure. The difficulty is that random forcing in $L_{\alpha}$ is a proper class. 

\begin{defini} Suppose that $y$ is a real. Let $\lambda^{y}$ ($\zeta^y$, $\Sigma^y$) denotes the supremum of the ordinals writable (eventually writable, accidentally writable) in the oracle $y$. Let $\lambda=\lambda^0, \zeta=\zeta^0$, $\Sigma=\Sigma^0$. 
\end{defini} 

Welch characterized the writable (eventually writable, accidentally writable) reals \cite{We1}. 

\begin{thm}[Welch]{\label{ITTMchar}} For every real $x$, the reals writable (eventually writable, accidentally writable) in the oracle $x$ are exactly those in $L_{\lambda^{x}}[x]$ ($L_{\zeta^{x}}[x]$, $L_{\Sigma^{x}}[x]$). 
\end{thm}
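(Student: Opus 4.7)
By relativizing every argument to the oracle $y$, it suffices to treat the case $y=0$, establishing that the writable, eventually writable, and accidentally writable reals coincide with ${}^\omega 2\cap L_{\lambda}$, ${}^\omega 2\cap L_{\zeta}$, and ${}^\omega 2\cap L_{\Sigma}$ respectively. I would split the proof into a straightforward forward inclusion and a more delicate backward inclusion, running the three cases in parallel.

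For the forward inclusion, suppose $x$ is writable by a program $P$ halting at stage $\alpha<\lambda$. The full computation $(c_\beta)_{\beta\le\alpha}$ is uniformly definable over $L_\alpha$: successor stages are $\Delta_0$ in the previous configuration, and the $\liminf$ rule at limits is $\Sigma_2$ over the preceding initial segment. Hence the computation, and in particular its final tape content $x$, lies in $L_{\alpha+\omega}\subseteq L_\lambda$. For eventually and accidentally writable reals one argues similarly: the eventual stabilization at stage $\zeta$ is definable over $L_\zeta$, and any real that appears on the tape at some $\beta<\Sigma$ is already coded in $L_{\beta+\omega}\subseteq L_\Sigma$.

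For the backward inclusion, the core step is that the writable ordinals are cofinal in $\lambda$, so that for every $\alpha<\lambda$ one can write an ordinal $\gamma\ge\alpha$ and then, as in Lemma \ref{enumerateL}, use bounded truth-predicate evaluation to generate a code for $L_\gamma$ and read off any designated real it contains. Given $x\in L_\lambda$, pick $\alpha<\lambda$ with $x\in L_\alpha$, together with a natural-number index $n$ specifying $x$ in a canonical enumeration of $L_\alpha$; the ITTM writes a code for $L_\alpha$ and outputs the real with index $n$. The analogous argument for $\zeta$ uses that eventually writable ordinals are cofinal in $\zeta$ and that one can recognize stabilization of a flag cell; for $\Sigma$, one uses that accidentally writable ordinals are cofinal in $\Sigma$ and that every real in $L_\Sigma$ appears on some tape cell during a suitable universal enumeration.

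The main obstacle is justifying the three cofinality facts, which is precisely where Welch's characterization of the triple $(\lambda,\zeta,\Sigma)$ as the lexicographically least triple with $L_\lambda\prec_{\Sigma_1}L_\zeta\prec_{\Sigma_2}L_\Sigma$ enters. From this elementarity one extracts: any writable ordinal is $\Sigma_1$-fixed over $L_\lambda$ and conversely, matching the $OTM$-style argument of Lemma \ref{OTMhaltingtimesup}; eventual writability corresponds to $\Sigma_2$-fixed ordinals below $\zeta$; and accidental writability corresponds to ordinals definable in $L_\Sigma$. Since these are the results established by Welch in \cite{We1}, I would prove the theorem by first isolating the elementarity triple, then deriving the cofinality statements, and finally running the uniform coding argument above in each of the three cases.
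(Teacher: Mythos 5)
The paper itself gives no proof of this statement: it is quoted as Welch's theorem with a citation to \cite{We1}, so there is no internal argument to match. Your outline does follow the standard route in the literature, namely deriving the characterization from the $\lambda$-$\zeta$-$\Sigma$ elementarity triple (the paper's Theorem \ref{lazesithm}), and your backward inclusions (write a code for a sufficiently large ordinal, build a code of the corresponding $L$-level, read off the real at a fixed index, with the restart-on-change trick for the eventually and accidentally writable cases) are essentially the right argument in outline.

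The genuine gap is that you locate the difficulty in the wrong place. The three ``cofinality facts'' you single out as the main obstacle are immediate: $\lambda^x,\zeta^x,\Sigma^x$ are \emph{defined} as suprema of the writable, eventually writable, accidentally writable ordinals, so cofinality holds by fiat. What is hard is precisely the direction you call straightforward. Your forward inclusion starts from ``a program halting at stage $\alpha<\lambda$,'' but the bound on halting times is not definitional; that the supremum of clockable ordinals equals $\lambda$ is itself a theorem of Welch (answering a question of Hamkins--Lewis), proved with the same $\Sigma_2$-machinery. Likewise, that an eventually stabilizing output is an element of $L_{\zeta}$ (rather than merely definable over $L_{\zeta}$, hence only in $L_{\zeta+1}$), and that every real which ever appears on a tape already appears before stage $\Sigma$ (the looping of snapshots between $\zeta$ and $\Sigma$), are the substantive content of the $\lambda$-$\zeta$-$\Sigma$ analysis; your sketch silently assumes all three. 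A correct write-up must first derive these bounding facts from $L_{\lambda}\prec_{\Sigma_1}L_{\zeta}\prec_{\Sigma_2}L_{\Sigma}$ and only then run the forward inclusions. A smaller point: Lemma \ref{enumerateL} concerns OTMs with ordinal-length tape; for ITTMs you need the separate (standard, but not free) fact that an ITTM can write a code for $L_{\gamma}$ from a real coding $\gamma$, and your appeal to ``bounded truth-predicate evaluation'' should be replaced by, or reduced to, that construction.
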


Note that $\zeta$ is $\Sigma_2$-admissible and $\Sigma$ is a limit of $\Sigma_2$-admissibles \cite[Lemma 7, p. 19]{We2}, but $\Sigma$ is not admissible \cite[Fact 2]{We2}. 
Moreover $\lambda$ is an admissible limit of admissibles by \cite[Fact 2.2, p. 11]{We2}. 
Since adding an oracle can only increase the supremum of the writable (eventually writable, accidentally writable) ordinals, we have $\lambda\leq \lambda^x$, $\zeta\leq \zeta^x$, and $\Sigma\leq \Sigma^x$ for all reals $x$. 

Our goal is to show that $\lambda^x=\lambda$, $\zeta^x=\zeta$, and $\Sigma^x=\Sigma$ for Cohen generic reals $x$ over $L_{\Sigma+1}$, using the following characterization. 
The proof of the unrelativized version can be found in \cite[Theorem 2.1, Theorem 2.3]{We1}. The relativized version is discussed in the proof of \cite[Lemma 2.4]{We1}. 

\begin{thm}[Welch]{\label{lazesithm}} 
Suppose that $x$ is a real. Then $(\zeta^{x},\Sigma^{x})$ is the lexically minimal pair of ordinals such that $L_{\zeta^{x}}[x]\prec_{\Sigma_{2}}L_{\Sigma^{x}}[x]$. Moreover, $\lambda^{x}$ is minimal 
with the property that $L_{\lambda^{x}}[x]\prec_{\Sigma_{1}}L_{\zeta^{x}}[x]$.
\end{thm}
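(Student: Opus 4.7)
The strategy is to exploit a recurrence property of $ITTM$ computations: for every non-halting $ITTM$ program $P$ in oracle $x$, the configuration (tape content, internal state, head position) at stage $\Sigma^{x}$ coincides with that at stage $\zeta^{x}$. Once this ``loop from $\zeta^{x}$ to $\Sigma^{x}$'' is established, both the $\Sigma_{2}$-elementarity $L_{\zeta^{x}}[x]\prec_{\Sigma_{2}}L_{\Sigma^{x}}[x]$ and the $\Sigma_{1}$-elementarity $L_{\lambda^{x}}[x]\prec_{\Sigma_{1}}L_{\zeta^{x}}[x]$ follow by a uniform search argument, and minimality is verified by designing $ITTM$ programs that would accidentally write smaller candidate ordinals.

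I would first check that the configuration of $P^{x}$ at stage $\alpha$ is $\Delta_{1}$-definable over $L_{\alpha+1}[x]$, uniformly in $P$ and $\alpha$. Combined with Theorem \ref{ITTMchar}, which identifies the writable, eventually writable and accidentally writable reals with those of $L_{\lambda^{x}}[x]$, $L_{\zeta^{x}}[x]$ and $L_{\Sigma^{x}}[x]$ respectively, this yields natural $\Sigma_{2}$ descriptions of $\zeta^{x}$ and $\Sigma^{x}$: informally, $\zeta^{x}$ is the least stage beyond which no cell that is ever going to stabilize changes again in any program, and $\Sigma^{x}$ is the least stage at which the configuration sequence of a universal $ITTM$ starts to revisit a value already seen. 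A pigeonhole argument in configuration space, applied simultaneously to a universal simulator that interleaves all programs, produces the recurrence $\mathrm{Conf}_{\zeta^{x}}=\mathrm{Conf}_{\Sigma^{x}}$.

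For the upward direction of $\Sigma_{2}$-elementarity, let $\phi(a)$ be $\Sigma_{2}$ with $a\in L_{\zeta^{x}}[x]$ and $L_{\Sigma^{x}}[x]\models\phi(a)$. Since $a$ is eventually writable, some $ITTM$ program eventually produces $a$ and then scans the $L[x]$-hierarchy for a stage $L_{\gamma}[x]$ witnessing $\phi(a)$, flashing a designated cell to $1$ the first time such a $\gamma$ is found. The flash occurs before $\Sigma^{x}$ because the witness lives in $L_{\Sigma^{x}}[x]$; the recurrence then forces the same flash before $\zeta^{x}$, placing a witness inside $L_{\zeta^{x}}[x]$. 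The $\Sigma_{1}$ claim for $\lambda^{x}$ is handled identically, except that the corresponding search actually halts, so its halting time is writable and thus below $\lambda^{x}$. Minimality of the pair is verified by choosing, for any candidate $(\alpha,\beta)<_{\mathrm{lex}}(\zeta^{x},\Sigma^{x})$, an eventually writable ordinal in $[\alpha,\zeta^{x})$ or an accidentally writable ordinal in $(\beta,\Sigma^{x}]$ and encoding its existence as a $\Sigma_{2}$-statement that separates the two models; analogously for $\lambda^{x}$ using writable ordinals.

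The principal obstacle is the recurrence lemma itself. One must track the limit-inferior behavior of tape cells and machine states across very long ordinal stages and show that oscillating cells resume their patterns in a synchronized way across all programs simultaneously, which is the content of Welch's lazy-$S$ or lost-melody analysis. The relativization to the oracle $x$ is routine, but the combinatorial core requires delicate definability bookkeeping inside the $L[x]$-hierarchy and careful coupling between the $\Sigma_{2}$-description of $\Sigma^{x}$ and the actual recurrence of configurations.
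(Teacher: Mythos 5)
First, note that the paper does not prove this statement: it is Welch's theorem, quoted from \cite[Theorems 2.1 and 2.3]{We1}, with the relativization to the oracle $x$ discussed in the proof of \cite[Lemma 2.4]{We1}. So your sketch has to be measured against Welch's argument. Your overall picture --- non-halting computations loop from $\zeta^{x}$ to $\Sigma^{x}$, searching/theory machines transfer truth between levels, minimality comes from the loop --- is the right shape, but the logical order you propose has a genuine gap at its centre. A pigeonhole argument in configuration space only yields that the snapshot sequence of a universal machine is eventually periodic, i.e.\ that \emph{some} snapshot recurs at \emph{some} pair of times; it does not identify those times as $\zeta^{x}$ and $\Sigma^{x}$, and that identification is precisely the deep content of the theorem. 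In Welch's proof the derivation runs the other way: one takes the lexically least pair $(\zeta',\Sigma')$ with $L_{\zeta'}[x]\prec_{\Sigma_{2}}L_{\Sigma'}[x]$ (whose existence needs its own argument), shows that any machine not yet halted has the same snapshot at $\zeta'$ and $\Sigma'$ because eventual stabilization of a cell is a $\Sigma_{2}$ (resp.\ $\Pi_{2}$) fact transferred by the elementarity, deduces $\zeta^{x}\leq\zeta'$ and $\Sigma^{x}\leq\Sigma'$, and obtains the converse inequalities from machines that eventually write the relevant theories, which establishes $L_{\zeta^{x}}[x]\prec_{\Sigma_{2}}L_{\Sigma^{x}}[x]$ and then minimality. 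Deriving the elementarity from a recurrence that you obtain only by pigeonhole is therefore circular at exactly the point you yourself flag as ``the principal obstacle''.

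Second, your transfer argument is too weak for $\Sigma_{2}$. A machine that searches the $L[x]$-hierarchy and ``flashes a cell the first time a witnessing level is found'' verifies $\Sigma_{1}$ statements; a $\Sigma_{2}$ statement $\exists u\,\forall v\,\psi(u,v,a)$ is neither upward persistent nor witnessed by a single level, so finding $\gamma$ with $L_{\gamma}[x]\models\phi(a)$ says nothing about $L_{\Sigma^{x}}[x]$, and conversely truth in $L_{\Sigma^{x}}[x]$ need not reflect to a level such a search can recognize. Welch captures $\Sigma_{2}$ facts not through halting-style flashes but through the liminf rule: a cell is eventually constantly $1$ if and only if an associated $\Sigma_{2}$ statement holds, and this coupling is what ties the elementarity to the loop. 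Note also that both directions of $\prec_{\Sigma_{2}}$ require argument (only $\Sigma_{1}$ truth goes up automatically), and your minimality paragraph does not explain how a lexically smaller elementary pair would actually be contradicted: in Welch's argument minimality is exactly what the looping lemma applied to the least elementary pair delivers, not a separate separation-by-ordinals step. The claim about $\lambda^{x}$ is the one place where your halting-search idea is adequate, since there the statements involved really are $\Sigma_{1}$.
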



Although we only need to force over $L_{\alpha}$ where $\alpha$ is admissible or a limit of admissibles, let us phrase the results in a stronger form. 
Mathias \cite{Ma} developed set forcing over models of a weak fragment $PROV$ of $ZFC$ such that the transitive models of $PROV$, the \emph{provident sets}, are 
the transitive sets closed under functions defined by recursion along rudimentary functions and containing $\omega$. 
The definitions and basic facts about rudimentary functions and provident sets can be found in \cite{Ma, MaBo}. For example, $L_{\alpha}$ is provident if and only if $\alpha$ is an infinite indecomposable ordinal.
We would like to thank Adrian Mathias for discussions on this topic. 

As usual, if $\mathbb{P}\subseteq L_{\alpha}$ is a partial order and $G\subseteq \mathbb{P}$ is a filter, let $L_{\alpha}[G]=\{\sigma^G\mid \sigma\in L_{\alpha}\}$ denote the \emph{generic extension} of $L_{\alpha}$ by $G$. 
Let $L_{\alpha}^x$ denote $L_{\alpha}$ built relative to the language $\{\in,x\}$, where $x$ is a real. 
If $L_{\alpha}$ is provident and $x$ is Cohen generic over $L_{\alpha}$, then $L_{\alpha}[x]=L_{\alpha}^x$ by \cite[Section 9]{Ma}. 


\begin{lemma}{\label{mutgensect}} Suppose that $L_{\alpha}$ is provident, $\mathbb{P}, \mathbb{Q}\in L_{\alpha}$ are forcings, and $G\times H$ is $\mathbb{P}\times \mathbb{Q}$-generic over $L_{\alpha}$. Then $L_{\alpha}[G]\cap L_{\alpha}[H]= L_{\alpha}$. 
\end{lemma}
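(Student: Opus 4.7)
The plan is to transfer the classical mutual-genericity argument---namely that $V[G] \cap V[H] = V$ whenever $G$ and $H$ are mutually generic---into Mathias's forcing theory over provident sets. Throughout I will use the following ingredients from \cite{Ma}: the forcing theorem and truth lemma over provident models, the definability (in fact, rudimentarity) of the atomic forcing relation ``$p \Vdash_{\mathbb{P}} \check{b} \in \dot{x}$'' as a relation on parameters in $L_{\alpha}$, and the product identification $L_{\alpha}[G \times H] = L_{\alpha}[G][H]$ for a product of two set-forcings in $L_{\alpha}$.

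Since $L_{\alpha} \subseteq L_{\alpha}[G] \cap L_{\alpha}[H]$ is trivial, I only have to prove the reverse inclusion. I will do this by transfinite induction on rank: assuming every element of $a \in L_{\alpha}[G] \cap L_{\alpha}[H]$ of lower rank already lies in $L_{\alpha}$, it suffices to show that for each $b \in L_{\alpha}$ the question ``$b \in a$?'' is decided inside $L_{\alpha}$ by a relation definable from parameters in $L_{\alpha}$; then rudimentary separation (available in a provident set) bounds the resulting set and places $a$ in $L_{\alpha}$.

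For the inductive step, fix a $\mathbb{P}$-name $\dot{x} \in L_{\alpha}$ with $\dot{x}^{G} = a$ and a $\mathbb{Q}$-name $\dot{y} \in L_{\alpha}$ with $\dot{y}^{H} = a$. By the product identification, $\dot{x}^{G \times H} = \dot{y}^{G \times H}$ in $L_{\alpha}[G \times H]$, so the forcing theorem produces some $(p,q) \in G \times H$ forcing $\forall z\,(z \in \dot{x} \leftrightarrow z \in \dot{y})$ in the relevant sense; in particular $(p,q) \Vdash \check{b} \in \dot{x} \leftrightarrow \check{b} \in \dot{y}$ for every $b \in L_{\alpha}$. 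The key claim is then that $p$ alone (inside $\mathbb{P}$) already decides ``$\check{b} \in \dot{x}$'' for every such $b$. If not, there are $p_{0}, p_{1} \leq p$ in $\mathbb{P}$ forcing opposite values of $\check{b} \in \dot{x}$; by density in $\mathbb{Q}$, pick $q' \leq q$ deciding $\check{b} \in \dot{y}$. Whichever side $q'$ chooses, one of $(p_{0}, q')$ or $(p_{1}, q')$ then extends $(p,q)$ while forcing $\check{b} \in \dot{x} \not\leftrightarrow \check{b} \in \dot{y}$, a contradiction. So $p$ decides every atomic membership statement about $\dot{x}$, and the truth lemma gives $b \in a \iff p \Vdash_{\mathbb{P}} \check{b} \in \dot{x}$; since the right-hand side is definable in $L_{\alpha}$ from $p, \dot{x}, \mathbb{P}$, the inductive step closes.

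The main obstacle is purely foundational: we are working over a provident $L_{\alpha}$ rather than a ZFC model, so each appeal to the forcing theorem, the truth lemma, the definability of the atomic forcing relation, and the product identification must be justified within Mathias's framework rather than invoked from standard textbooks. Once these are in place, the density argument in the key claim is entirely elementary and the whole proof reduces to a two-line combinatorial manipulation of conditions in $\mathbb{P} \times \mathbb{Q}$.
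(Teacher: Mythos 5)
Your argument is correct and is essentially the paper's own proof: both rest on Mathias's rudimentary definability of the (atomic) forcing relation over the provident $L_{\alpha}$, take names $\sigma,\tau$ and a condition $(p,q)$ forcing their equality, and run the standard product density argument to show one coordinate alone decides each membership statement, so the common value is a set definable in $L_{\alpha}$. The only differences are organizational (induction on rank with a positive conclusion versus a minimal-rank counterexample and contradiction), so no further comparison is needed.
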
 

\begin{proof} 
The forcing relation for atomic formulas is definable by a rudimentary recursion over provident sets by \cite[Section 2]{Ma}, and the forcing relation for $\Delta_0$ formulas is rudimentary in the forcing relation for atomic formulas \cite[Section 3]{Ma}. 
Hence $\{(p,q)\in \mathbb{P}\times\mathbb{Q}\mid p\Vdash \check{q}\in \sigma\}\in L_{\alpha}$ for any $\mathbb{P}$-name $\sigma\in L_{\alpha}$. 
Thus a filter $F\subseteq \mathbb{P}\times\mathbb{P}$ is $\mathbb{P}\times\mathbb{P}$-generic over $L_{\alpha}$ if and only if there is a $\mathbb{P}$-generic filter $G$ over $L_{\alpha}$ and a $\mathbb{P}$-generic filter $H$ over $L_{\alpha}[G]$ with $F=G\times H$, by the proof of \cite[Lemma 15.9]{Je}. 

Let $\dot{G}$, $\dot{H}$ denote the canonical names for $G,H$. Suppose that $x$ is of minimal rank with $x\in L_{\alpha}[G]\cap L_{\alpha}[H]$ and $x\notin L_{\alpha}$. Suppose that $\sigma\in M^{\mathbb{P}}$, $\tau\in M^{\mathbb{Q}}$ with $\sigma^G=x$ and $\tau^H=x$. Then there are conditions $p\in \mathbb{P}$, $q\in \mathbb{Q}$ with $(p,q)\Vdash_{\mathbb{P}\times\mathbb{Q}} \sigma^{\dot{G}}=\tau^{\dot{H}}$. Suppose that $x\notin M$. Then for some $y\in M$, $p$ does not decide if $y\in \sigma^{\dot{G}}$, and hence $q$ does not decide if $y\in \tau^{\dot{H}}$. Suppose that $p'\leq p$, $q'\leq q$ with $p'\Vdash_{\mathbb{P}} y\in \sigma^{\dot{G}}$ and $q'\Vdash_{\mathbb{Q}} y\notin \tau^{\dot{H}}$. Then $(p',q')\Vdash_{\mathbb{P}\times\mathbb{Q}} \sigma^{\dot{G}}\neq \tau^{\dot{H}}$, contradicting the assumption that $(p,q)\Vdash_{\mathbb{P}\times\mathbb{Q}} \sigma^{\dot{G}}=\tau^{\dot{H}}$. 
\end{proof} 

\begin{lemma}{\label{comeagergen}}
Suppose that $\alpha\in\omega_1$ and $a\subseteq\omega$. Then the set $C_{\alpha}$ of Cohen-generic reals over $L_{\alpha}[a]$ is comeager.
\end{lemma}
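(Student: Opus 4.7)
The plan is to realize $C_\alpha$ as a countable intersection of dense open subsets of $2^\omega$ and invoke the Baire category theorem.

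First I would use the standard characterization of Cohen-genericity via Cohen forcing $\mathbb{C}=2^{<\omega}$: a real $x\in{}^\omega 2$ is Cohen-generic over the transitive model $L_\alpha[a]$ if and only if for every dense set $D\subseteq 2^{<\omega}$ with $D\in L_\alpha[a]$, some initial segment $x\restriction n$ lies in $D$. For each such dense $D$, set
\[
U_D \;=\; \bigcup_{s\in D}[s], \qquad [s]=\{x\in{}^\omega 2\mid s\subset x\}.
\]
Then $U_D$ is open by construction, and it is dense in ${}^\omega 2$: given any basic clopen $[t]$, the density of $D$ in $2^{<\omega}$ yields some $s\in D$ with $t\subseteq s$, whence $[s]\subseteq [t]\cap U_D$.

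Next I would use that $\alpha<\omega_1$ to conclude $|L_\alpha[a]|\leq|\alpha|\cdot\aleph_0=\aleph_0$, so there are only countably many dense $D\in L_\alpha[a]$; enumerate them as $(D_n)_{n\in\omega}$. By the characterization above,
\[
C_\alpha \;\supseteq\; \bigcap_{n\in\omega} U_{D_n},
\]
which is a countable intersection of dense open sets, hence comeager in ${}^\omega 2$ by the Baire category theorem. Since a superset of a comeager set is comeager, this gives the claim.

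There is no genuine obstacle here: the only thing to verify carefully is that each $U_D$ is open and dense, which is immediate from the definition of density in $2^{<\omega}$, and that countability of $L_\alpha[a]$ is really being used to reduce to a Baire-category argument rather than a general intersection.
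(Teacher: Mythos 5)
Your proposal is correct and follows essentially the same route as the paper: the paper shows that for each dense $D\in L_\alpha[a]$ the set of reals whose associated filter misses $D$ is nowhere dense, and then uses countability of $L_\alpha[a]$ to conclude the union over all such $D$ is meager, which is exactly the complement of your intersection $\bigcap_n U_{D_n}$. The only cosmetic difference is that you phrase it via dense open sets and the Baire category theorem (which is not even needed, since a countable intersection of dense open sets is comeager by definition), while the paper argues directly with nowhere dense sets.
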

\begin{proof}
Cohen forcing consists of functions of the form $p:n\rightarrow2$ for some $n\in\omega$. 
For $f:\omega\rightarrow2$, we define the filter $G_f$ as the set of all finite initial functions of $f$. Then $f$ is Cohen-generic iff $G_f$ is a Cohen-generic filter.
We show that the set of $f$ such that $G_f$ intersects every dense subset of $\mathbb{P}$ contained in $L_{\alpha}$ is comeager. 

To this end, we first demonstrate that, for a particular dense subset $D$ of $\mathbb{P}$, the set $N_{D}$ of $f$ such that $G_{f}\cap D=\emptyset$ is nowhere dense.
To see this, let $[x,y]$ be a non-empty interval. We have to show that there are $x^{\prime},y^{\prime}$ such that $x<x^{\prime}<y^{\prime}<y$ and such that 
$[x^{\prime},y^{\prime}]$ consists entirely of elements $h$ for which $G_{h}\cap D\neq\emptyset$. Pick a subinterval $I$ of $[x,y]$ of the form $[k2^{-m},(k+1)2^{-m}]$, where
$k,m\in\mathbb{N}$ and $k<2^m$. Thus $I$ consists of all reals having the binary presentation $b(k)$ of $k$ as an initial segment. As $D$ is dense, pick $d\in D$ such that
$b(k)\subseteq d$. Thus $d\in G_{h}\cap D$ for all $h$ such that $d\subseteq h$, so that $G_{h}\cap D\neq\emptyset$ for all such $h$. 
But the set of these $h$ clearly forms a subinterval of $I$, hence of $[x,y]$ and is hence as desired. 

Now, $L_{\alpha}[a]$ is countable and hence contains only countably many dense sets, say $(D_{i}|i\in\omega)$. The set $F$ of $f$ for which there is some $i\in\omega$ with $G_{f}\cap D_{i}=\emptyset$ is just
$\bigcup_{i\in\omega}N_{D_{i}}$. As we just saw that each $D_i$ is nowhere dense, it follows that $F$ is meager. Consequently, the complement of $F$, i.e. the set of $f$ such that $G_f$ intersects
every $D_i$, is comeager.
\end{proof}

\begin{lemma}{\label{mutgen}}
Suppose that $A\subseteq{}^{\omega}2$ is a nonmeager Borel set and $\alpha< \omega_1$. There are reals $x, y\in A$ such that $x$ is Cohen-generic over $L_{\alpha}$ and $y$ is Cohen-generic over $L_{\alpha}[x]$.
\end{lemma}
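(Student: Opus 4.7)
The plan is to apply Lemma \ref{comeagergen} twice and combine it with the elementary observation that the intersection of a comeager set with a nonmeager set is itself nonmeager (hence nonempty). Indeed, if $C$ is comeager and $A\cap C$ were meager, then $A\subseteq (A\cap C)\cup({}^\omega 2\setminus C)$ would exhibit $A$ as a union of two meager sets, contradicting the nonmeagerness of $A$.

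First I would apply Lemma \ref{comeagergen} with $a=\emptyset$ to see that the set $C_\alpha$ of Cohen-generic reals over $L_\alpha$ is comeager in ${}^\omega 2$. By the observation above, $A\cap C_\alpha$ is nonmeager and in particular nonempty, so one can fix some $x\in A\cap C_\alpha$; this is the first required real.

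Next, because $\alpha<\omega_1$ and $x$ is a real (which we view as a subset of $\omega$), the set $L_\alpha[x]$ remains countable. Hence Lemma \ref{comeagergen} applies with $a=x$ to give that the set $C^{x}_\alpha$ of Cohen-generics over $L_\alpha[x]$ is comeager. Applying the same Baire-category observation once more, $A\cap C^{x}_\alpha$ is nonempty, and any $y\in A\cap C^{x}_\alpha$ completes the proof.

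There is no real obstacle: Lemma \ref{comeagergen} does all the heavy lifting, and a one-line Baire-category argument is enough. The only thing worth flagging is that $x$ and $y$ are chosen sequentially — first using the countable model $L_\alpha$, then using the still countable enlargement $L_\alpha[x]$ — and the statement does not assert mutual genericity of $x$ and $y$, although this in fact follows from the product lemma and will be exploited in the subsequent applications.
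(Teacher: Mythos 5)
Your proposal is correct and follows essentially the same route as the paper: apply Lemma \ref{comeagergen} first to $L_{\alpha}$ and then, after fixing $x$, to the still countable $L_{\alpha}[x]$, each time intersecting the comeager set of Cohen generics with the nonmeager set $A$ to find a point. In fact your write-up is slightly more careful than the paper's, which loosely asserts that $A\cap C_{\alpha}$ is ``comeager'' when nonmeager (and hence nonempty) is what is actually available and all that is needed.
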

\begin{proof}
Let $C_{\alpha}$ denote the set of Cohen reals over $L_{\alpha}$. 
Then $C_{\alpha}$ is comeager and hence $A\cap C_{\alpha}$ is comeager. Suppose tha $x\in A\cap C_{\alpha}$ and let $C$ denote the set
of Cohen reals over $L_{\alpha}[x]$.  
Since $C$ is comeager, suppose that $y\in A\cap C$. Then $y$ is Cohen generic over $L_{\alpha}[x]$. 
Hence $x,y\in A$ are mutually Cohen generic over $L_{\alpha}$. 
\end{proof} 

\begin{lemma}{\label{delta0forcing}}
Let $\mathbb{P}$ denote Cohen forcing. Suppose that $L_{\alpha}$ is provident, $p\in\mathbb{P}$, $\vec{\sigma}\in L_{\alpha}$, $\varphi$ is a formula. Then 
\begin{enumerate} 
\item If $\varphi$ is a $\Delta_0$ formula, then $p\Vdash_{\mathbb{P}}^{L_{\alpha}} \varphi$ is $\Delta_1$ over $L_{\alpha}$. 
\item If $\varphi$ is a $\Sigma_n$ formula, then $p\Vdash_{\mathbb{P}}^{L_{\alpha}} \varphi$ is $\Sigma_n$ over $L_{\alpha}$. 
\item If $\varphi$ is a $\Pi_n$ formula, the $p\Vdash_{\mathbb{P}}^{L_{\alpha}} \varphi$ is $\Pi_n$ over $L_{\alpha}$. 
\end{enumerate} 
\end{lemma}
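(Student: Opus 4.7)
The approach is to proceed by induction on the logical complexity of $\varphi$, building on the rudimentary recursion that defines the forcing relation for atomic formulas over provident sets in Mathias's framework \cite{Ma}.

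First I would handle the base case. By \cite[Section 2]{Ma}, the forcing relations $p \Vdash \sigma = \tau$ and $p \Vdash \sigma \in \tau$ for names $\sigma,\tau \in L_\alpha$ are computed by a rudimentary recursion on the ranks of the names, and hence are $\Delta_1$-definable over $L_\alpha$. I would then extend this to $\Delta_0$ formulas by induction on $\Delta_0$-complexity, using the standard clauses: conjunction and disjunction are handled pointwise, negation via $p \Vdash \neg \varphi \Leftrightarrow \forall q \leq p\, (q \not\Vdash \varphi)$, and bounded quantifiers via the usual reduction to forcing over the name components of the bounding name. Since $\mathbb{P}$ and all relevant names lie in $L_\alpha$, every step remains $\Delta_1$.

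Next I would prove the $\Sigma_n$ and $\Pi_n$ cases by simultaneous induction on $n$. Define $p \Vdash \forall x\, \varphi(x)$ as $\forall \sigma \in L_\alpha\, (p \Vdash \varphi(\sigma))$; if $\varphi$ is $\Sigma_{n-1}$, then by induction $p \Vdash \varphi(\sigma)$ is $\Sigma_{n-1}$ uniformly in $\sigma$, and prefixing the unbounded universal quantifier over names yields $\Pi_n$. Dually, $p \Vdash \exists x\, \varphi(x)$ is handled via the density condition $\forall q \leq p\, \exists r \leq q\, \exists \sigma \in L_\alpha\, (r \Vdash \varphi(\sigma))$, which collapses to a $\Sigma_n$ formula once the bounded quantifiers over conditions are absorbed (see below). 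The symmetry between the two cases comes from the fact that the inductive clauses for $\Sigma_n$ and $\Pi_n$ are formally dual.

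The hardest part will be verifying that the bounded quantifications $\forall q \leq p$ and $\exists r \leq q$ appearing in the treatment of negation and the density clause for existential forcing do not inflate the L\'evy class, since a provident $L_\alpha$ need not satisfy full $\Sigma_n$-collection. The resolution is that Cohen forcing $\mathbb{P}$ is an absolutely definable subset of $L_\omega \subseteq L_\alpha$ with rudimentary ordering, so that bounded quantification over $\mathbb{P}$ (or over a fixed condition $p$) by a rudimentary predicate preserves the relevant hierarchy in provident sets by the work in \cite[Section 3]{Ma}. Granting this stability, the induction carries through uniformly and yields the claimed complexity bounds in all three cases.
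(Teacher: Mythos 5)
Your overall architecture (atomic and $\Delta_0$ forcing from Mathias's rudimentary recursion, then an induction on $n$ over the quantifier prefix) is the same as the paper's, which simply cites \cite[Section 3]{Ma} for the $\Delta_0$ case and says the rest follows inductively from the definition of the forcing relation. The problem is in your induction step for the unbounded existential quantifier. You define $p\Vdash\exists x\,\varphi(x)$ by the density clause $\forall q\leq p\,\exists r\leq q\,\exists\sigma\,(r\Vdash\varphi(\sigma))$, and the claim that this ``collapses to a $\Sigma_n$ formula once the bounded quantifiers over conditions are absorbed'' is exactly what is not available here: absorbing a bounded universal quantifier over the (infinite) set of conditions below $p$ into a $\Sigma_n$ matrix requires a collection principle ($\Sigma_1$-collection already for $n=1$), and provident sets need not satisfy any such collection --- providence is far weaker than admissibility (for instance $L_{\omega^2}$ is provident and has a $\Delta_1$-definable cofinal $\omega$-sequence, so a statement $\forall q\in\mathbb{P}\,\exists r\,\exists\sigma\,\theta$ with $\theta$ of complexity $\Delta_1$ is genuinely $\Pi_2$ there). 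Your stated resolution, that $\mathbb{P}$ and its ordering are rudimentary and essentially live in $L_\omega$, does not address this, because the matrix being quantified over is $\Sigma_n$, not rudimentary; \cite[Section 3]{Ma} only gives that forcing for $\Delta_0$ formulas is rudimentary in atomic forcing. Nor can you pass from the density clause to a single witnessing name via fullness/maximality, since that argument needs mixing and is not available over a mere provident $L_\alpha$.

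The repair, and what the paper's terse induction implicitly uses, is to take the strong clauses for the unbounded quantifiers as the definition of $\Vdash$: $p\Vdash\exists x\,\varphi$ iff $\exists\sigma\in L_\alpha\,(p\Vdash\varphi(\sigma))$ and $p\Vdash\forall x\,\varphi$ iff $\forall\sigma\in L_\alpha\,(p\Vdash\varphi(\sigma))$. With these clauses the complexity count is immediate by induction: prefixing $\exists\sigma$ to a $\Pi_{n-1}$ relation gives $\Sigma_n$, prefixing $\forall\sigma$ to a $\Sigma_{n-1}$ relation gives $\Pi_n$, with the $\Delta_1$ base case from Mathias. The price one pays for abandoning the density clause is in the forcing theorem, not in this lemma: with the strong clauses the truth lemma is recovered by evaluating only at filters that are generic over $L_{\alpha+1}$ (so that the relevant dense sets definable over $L_\alpha$ are met), which is precisely how the subsequent lemma in the paper is formulated. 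As written, your version of statement (2) is unjustified for general provident $L_\alpha$ (it would be fine for admissible $\alpha$ and $n=1$, but the lemma is claimed, and later used, at the provident level of generality).
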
 

\begin{proof} This is proved for $\Delta_0$ formulas in \cite[Section 3]{Ma}. The rest follows inductively from the definition of the forcing relation. 
\end{proof} 

\begin{lemma} Let $\mathbb{P}$ denote Cohen forcing. Suppose that $L_{\alpha}$ is provident, $p\in\mathbb{P}$, $\varphi$ is a formula, and $\vec{\sigma}\in L_{\alpha}$. Then 
\begin{enumerate} 
\item $p\Vdash \varphi(\vec{\sigma})$ if and only if $L_{\alpha}[G]\vDash \varphi(\vec{\sigma}^G)$ for all Cohen generic filters $G$ over $L_{\alpha+1}$. 
\item Suppose that $G$ is Cohen generic over $L_{\alpha+1}$. Then $L_{\alpha}[G]\vDash \varphi(\vec{\sigma})$ if and only if $p\Vdash_{\mathbb{P}} \varphi(\vec{\sigma})$ for some $p\in G$. 
\end{enumerate} 
\end{lemma}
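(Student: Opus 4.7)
The plan is to prove (1) and (2) simultaneously by induction on the complexity of $\varphi$, treating (1) as the primary claim and deriving (2) from the inductive case of (1) via a density argument. For the base case of atomic and $\Delta_0$ formulas, both statements are immediate from Mathias' development of the forcing relation in the provident setting, which is the content used in the previous lemma. The Boolean inductive steps for $\wedge$, $\vee$, and $\neg$ are standard; for instance, for negation, if $p\Vdash \neg\varphi$ and some $L_{\alpha+1}$-generic $G\ni p$ satisfied $L_{\alpha}[G]\models\varphi(\vec{\sigma}^G)$, then (2) applied to $\varphi$ at lower complexity would yield some $q\in G$ with $q\Vdash\varphi$, contradicting the compatibility of $p$ and $q$ with $p\Vdash\neg\varphi$.

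The existential step is where the real work happens. The forward direction of (1) follows directly from the standard definition of $p\Vdash\exists x\,\varphi(x)$: below every extension of $p$ one finds a condition forcing $\varphi(\sigma)$ for some name $\sigma\in L_{\alpha}$, and so for any generic $G\ni p$ one extracts a witness $\sigma^G$ satisfying $\varphi$ in $L_{\alpha}[G]$. The reverse direction, as well as (2) at each level, relies on Lemma \ref{delta0forcing}: the forcing relation for $\varphi$ is definable over $L_{\alpha}$ with complexity matching $\varphi$. Hence the set
\[
D_{\varphi} = \{q\in\mathbb{P} : q\Vdash \varphi(\vec{\sigma}) \text{ or } q\Vdash \neg\varphi(\vec{\sigma})\}
\]
is definable over $L_{\alpha}$ and therefore belongs to $L_{\alpha+1}$, even though it need not belong to $L_{\alpha}$ itself. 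A standard argument shows that $D_{\varphi}$ is dense: if some $q\leq p$ failed to decide $\varphi$, one could, using the inductive hypothesis, produce two $L_{\alpha+1}$-generic filters through $q$ disagreeing on $\varphi$, contradicting the presumed equivalence. Since $G$ is generic over $L_{\alpha+1}$, it meets $D_{\varphi}$, and the deciding condition must force $\varphi$ rather than $\neg\varphi$, since otherwise (1) at the same complexity would deliver $L_{\alpha}[G]\models\neg\varphi$, closing the induction.

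The main obstacle is precisely this existential step, where in the classical $\mathrm{ZFC}$ setting one invokes collection to pool witnessing names. In a merely provident $L_{\alpha}$ collection may fail, so one cannot appeal to it directly. The point that rescues the argument is that Cohen conditions are hereditarily finite and all relevant $\mathbb{P}$-names lie rudimentarily inside $L_{\alpha}$, so the definition and complexity analysis of the forcing relation from Lemma \ref{delta0forcing} go through without collection; this in turn places $D_{\varphi}$ inside $L_{\alpha+1}$ and makes the genericity of $G$ over $L_{\alpha+1}$ (rather than over $L_{\alpha}$) exactly the right hypothesis to drive the density argument.
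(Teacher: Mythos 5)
Your overall plan is essentially the paper's own route: the paper proves this lemma by simply invoking the proof of the forcing theorem (citing Kunen, Theorems 3.5 and 3.6) on top of the preceding definability lemma, and your expansion correctly isolates the one point that matters in the provident setting, namely that Lemma \ref{delta0forcing} places all the relevant dense sets (conditions deciding $\varphi(\vec{\sigma})$, conditions forcing $\varphi(\sigma)$ for some name $\sigma$) as definable-over-$L_{\alpha}$ subsets of $\mathbb{P}$, hence as elements of $L_{\alpha+1}$, so that genericity over $L_{\alpha+1}$ rather than $L_{\alpha}$ is exactly the hypothesis needed to run the truth lemma.

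One step in your write-up does not work as stated: your density argument for $D_{\varphi}$. You claim that if some $q\leq p$ failed to decide $\varphi$, one could produce two $L_{\alpha+1}$-generic filters through $q$ disagreeing on $\varphi$, "contradicting the presumed equivalence." There is no contradiction there: when $q$ decides neither $\varphi$ nor $\neg\varphi$, generic extensions through $q$ disagreeing on $\varphi$ are precisely what one expects, and statement (1) says nothing about such $q$; invoking (1) at the same complexity to rule this out is circular, since (1) is what you are in the middle of proving. The repair is standard and cheaper: with the usual recursive definition of the forcing relation, $q\Vdash\neg\varphi(\vec{\sigma})$ holds exactly when no $r\leq q$ forces $\varphi(\vec{\sigma})$, so every condition either has an extension forcing $\varphi(\vec{\sigma})$ or itself forces $\neg\varphi(\vec{\sigma})$; density of $D_{\varphi}$ is immediate from the definition, with no appeal to generic filters at all. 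With that substitution (and the definability of $D_{\varphi}$ over $L_{\alpha}$ from Lemma \ref{delta0forcing}, which you already use), your induction closes and matches the cited argument.
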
 

\begin{proof} This follows from the proof of the forcing theorem, see for example \cite[Theorems 3.5 and 3.6]{Ku}. 
\end{proof} 

The previous lemma shows that $L_{\alpha}[x]\prec_{\Sigma_n} L_{\beta}[x]$ for all $n\geq 1$ and provident sets $L_{\alpha}\subseteq L_{\beta}$. This immediately implies the following. 

\begin{lemma}{\label{gensubmodels}} 
Suppose that $x$ is Cohen generic over $L_{\Sigma+1}$. 
\begin{enumerate} 
\item  $L_{\lambda}[x]\prec_{\Sigma_{1}}L_{\zeta}[x]\prec_{\Sigma_{2}}L_{\Sigma}[x]$. 
\item $\lambda^{x}=\lambda$, $\zeta^{x}=\zeta$ and $\Sigma^{x}=\Sigma$.
\end{enumerate} 
\end{lemma}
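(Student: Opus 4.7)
The plan is to combine Welch's unrelativized characterization of the triple $(\lambda,\zeta,\Sigma)$ (Theorem \ref{lazesithm} applied to the empty oracle) with the transfer principle announced just before the lemma, and then read off part~(2) from the minimality clause of Theorem \ref{lazesithm} together with the monotonicity inequalities $\lambda\le\lambda^{x}$, $\zeta\le\zeta^{x}$, $\Sigma\le\Sigma^{x}$ that were noted earlier.

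For part~(1), I would first verify that $L_{\lambda}$, $L_{\zeta}$, $L_{\Sigma}$ are all provident, using the facts cited from Welch that $\lambda$ is an admissible limit of admissibles, $\zeta$ is $\Sigma_{2}$-admissible and $\Sigma$ is a limit of $\Sigma_{2}$-admissibles; each such ordinal is indecomposable, so the corresponding $L_{\alpha}$ is provident in Mathias's sense. Welch's Theorem \ref{lazesithm} in the empty oracle gives $L_{\lambda}\prec_{\Sigma_{1}} L_{\zeta}$ and $L_{\zeta}\prec_{\Sigma_{2}} L_{\Sigma}$. I then apply the transfer result from the remark preceding the statement: for provident $L_{\alpha}\subseteq L_{\beta}$ with $L_{\alpha}\prec_{\Sigma_{n}}L_{\beta}$, Cohen genericity of $x$ yields $L_{\alpha}[x]\prec_{\Sigma_{n}}L_{\beta}[x]$. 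The underlying one-line verification is: for $\vec{\tau}\in L_{\alpha}$ and a $\Sigma_{n}$ formula $\varphi$, the forcing theorem gives $L_{\alpha}[x]\vDash \varphi(\vec{\tau}^{G})$ iff some $p\in G$ satisfies $p\Vdash^{L_{\alpha}}\varphi(\vec{\tau})$; by Lemma \ref{delta0forcing} this latter statement is itself $\Sigma_{n}$ over $L_{\alpha}$, so by $L_{\alpha}\prec_{\Sigma_{n}} L_{\beta}$ it is absolute to $L_{\beta}$, and the forcing theorem applied at level $\beta$ translates it back to $L_{\beta}[x]\vDash\varphi(\vec{\tau}^{G})$. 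Applied with $(\alpha,\beta)=(\lambda,\zeta)$ and $(\zeta,\Sigma)$, this proves part~(1).

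For part~(2), I would invoke Theorem \ref{lazesithm} relativized to $x$: $(\zeta^{x},\Sigma^{x})$ is the lexicographically least pair such that $L_{\zeta^{x}}[x]\prec_{\Sigma_{2}} L_{\Sigma^{x}}[x]$, and $\lambda^{x}$ is least with $L_{\lambda^{x}}[x]\prec_{\Sigma_{1}} L_{\zeta^{x}}[x]$. Part~(1) exhibits $(\zeta,\Sigma)$ as a witnessing pair, hence $(\zeta^{x},\Sigma^{x})\le_{\mathrm{lex}}(\zeta,\Sigma)$; combined with $\zeta\le\zeta^{x}$ and $\Sigma\le\Sigma^{x}$ this forces $\zeta^{x}=\zeta$ and then $\Sigma^{x}=\Sigma$. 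With $\zeta^{x}=\zeta$ in hand, part~(1) also shows $L_{\lambda}[x]\prec_{\Sigma_{1}} L_{\zeta^{x}}[x]$, giving $\lambda^{x}\le\lambda$; together with $\lambda\le\lambda^{x}$ this yields $\lambda^{x}=\lambda$.

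The main obstacle is really packaged inside the transfer lemma: one has to know that the Cohen forcing relation over a provident $L_{\alpha}$ respects formula complexity (which is Lemma \ref{delta0forcing}) and that it is absolute between $L_{\alpha}$ and $L_{\beta}$ whenever the relevant $\Sigma_{n}$-elementarity already holds downstairs. Once that transfer is granted, the two parts of the lemma are essentially bookkeeping with Welch's minimal-pair characterization, and the hypothesis that $x$ be Cohen generic over $L_{\Sigma+1}$ is only used to guarantee that genericity holds simultaneously over each of the provident models $L_{\lambda}$, $L_{\zeta}$, $L_{\Sigma}$.
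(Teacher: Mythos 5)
Your proposal is correct and follows exactly the route the paper intends: the paper derives the lemma "immediately" from the definability of the Cohen forcing relation (Lemma \ref{delta0forcing}) and the forcing theorem over provident $L_{\alpha}$, yielding the transfer $L_{\alpha}[x]\prec_{\Sigma_{n}}L_{\beta}[x]$, and then reads off part~(2) from Welch's minimality characterization (Theorem \ref{lazesithm}) together with $\lambda\le\lambda^{x}$, $\zeta\le\zeta^{x}$, $\Sigma\le\Sigma^{x}$. Your write-up simply makes explicit the same steps (providence of $L_{\lambda}$, $L_{\zeta}$, $L_{\Sigma}$ and genericity over each $L_{\alpha+1}$ following from genericity over $L_{\Sigma+1}$) that the paper leaves implicit.
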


\begin{prop}{\label{ITTMcomeager}}
Suppose that $x$ is a real and that $A$ is a comeager set of reals such that $x$ is writable (eventually writable, accidentally writable) in every oracle $y\in A$. Then $x$ is writable (eventually writable, accidentally writable). 
\end{prop}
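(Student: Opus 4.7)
The plan is to locate two reals $y, z \in A$ that are mutually Cohen generic over a sufficiently large level of $L$, and then squeeze $x$ into the appropriate level of the constructible hierarchy by combining Welch's characterization (Theorem~\ref{ITTMchar}) with the product forcing Lemma~\ref{mutgensect}.

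First I would use Lemma~\ref{comeagergen} to observe that the set of Cohen reals over $L_{\Sigma+1}$ is comeager, so its intersection with the comeager set $A$ is comeager and hence nonempty; pick any $y$ in this intersection. Since $L_{\Sigma+1}[y]$ is still countable, Lemma~\ref{comeagergen} again gives that the set of Cohen reals over $L_{\Sigma+1}[y]$ is comeager, and intersecting once more with $A$ I pick $z$ there. The standard product forcing lemma (as used in the proof of Lemma~\ref{mutgensect}) then makes $y$ and $z$ mutually Cohen generic over $L_{\Sigma+1}$, and \emph{a fortiori} mutually Cohen generic over each of $L_\lambda$, $L_\zeta$, $L_\Sigma$.

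Next I would invoke Lemma~\ref{gensubmodels} applied to $y$ and to $z$ separately to conclude that $\lambda^y = \lambda^z = \lambda$, $\zeta^y = \zeta^z = \zeta$, and $\Sigma^y = \Sigma^z = \Sigma$. In the writable case, Theorem~\ref{ITTMchar} translates the hypothesis into $x \in L_{\lambda^y}[y] = L_\lambda[y]$, and likewise $x \in L_\lambda[z]$. Since $\lambda$ is an admissible limit of admissibles it is indecomposable, so $L_\lambda$ is provident; Lemma~\ref{mutgensect} then yields $L_\lambda[y] \cap L_\lambda[z] = L_\lambda$, so $x \in L_\lambda$ and $x$ is writable by Theorem~\ref{ITTMchar}. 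The eventually writable case is identical after replacing $\lambda$ by $\zeta$ (using that $\zeta$ is $\Sigma_2$-admissible, hence indecomposable), and the accidentally writable case replaces $\lambda$ by $\Sigma$ (using that $\Sigma$ is a limit of $\Sigma_2$-admissibles, hence indecomposable).

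The main thing to verify carefully will be that Lemma~\ref{mutgensect} can be applied at each of the three levels $L_\lambda$, $L_\zeta$, $L_\Sigma$ rather than only at the larger model $L_{\Sigma+1}$ over which we did the forcing; this reduces to checking indecomposability of the three ordinals, which is immediate from the facts collected just after Theorem~\ref{ITTMchar}. The other delicate point, preservation of $\lambda$, $\zeta$, $\Sigma$ when passing to a Cohen extension, is already encapsulated in Lemma~\ref{gensubmodels}, so once these observations are in place the argument is a direct assembly of the lemmas.
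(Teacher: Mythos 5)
Your argument is correct and follows essentially the same route as the paper: restrict to Cohen generics over $L_{\Sigma+1}$, extract two mutually generic elements of $A$ (the paper packages this as Lemma \ref{mutgen}, which you reprove inline via Lemma \ref{comeagergen} and the product lemma), preserve $\lambda,\zeta,\Sigma$ via Lemma \ref{gensubmodels}, and intersect with Lemma \ref{mutgensect} using providence of $L_\lambda$, $L_\zeta$, $L_\Sigma$, finishing with Theorem \ref{ITTMchar}. No gaps worth flagging.
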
 

\begin{proof} 
The set $C$ of Cohen generic reals over $L_{\Sigma+1}$ is comeager by Lemma \ref{comeagergen}, 
so $A\cap C$ is comeager. 
We may assume without loss of generality that $A\subseteq C$. 
The reals writable in every $y\in A$ are those in $\bigcap_{y\in A}L_{\lambda}[y]$, the reals eventually writable in every $y\in A$ are those
in $\bigcap_{y\in A}L_{\zeta}[y]$, and the reals accidentally writable in every $y\in A$ are those in $\bigcap_{y\in A}L_{\Sigma}[y]$, by Lemma \ref{gensubmodels} and Theorem \ref{ITTMchar}. 

Since $A$ is comeager, $A$ contains two mutually Cohen generic reals $u$ and $v$ by Theorem \ref{mutgen}. Since $\lambda$, $\zeta$ and $\Sigma$ are limits of admissibles, it is readily
seen that $L_\lambda$, $L_\zeta$ and $L_\Sigma$ are provident. Then 
$$L_{\lambda}\subseteq\bigcap_{y\in A}L_{\lambda}[y]\subseteq L_{\lambda}[u]\cap L_{\lambda}[v]=L_{\lambda}$$
$$L_{\zeta}\subseteq\bigcap_{y\in A}L_{\zeta}[y]\subseteq L_{\zeta}[u]\cap L_{\zeta}[v]=L_{\zeta}$$ 
$$L_{\Sigma}\subseteq\bigcap_{y\in A}L_{\Sigma}[y]\subseteq L_{\Sigma}[u]\cap L_{\Sigma}[v]=L_{\Sigma}$$ 
by Theorem \ref{mutgensect}. 
Hence we have equalities in each case 
and the claim follows from Theorem \ref{ITTMchar}. 
\end{proof} 

%
%

\begin{thm}
Suppose that $x$ is a real and that $A$ is a nonmeager Borel set of reals such that $x$ is writable (eventually writable, accidentally writable) in every oracle $y\in A$. Then $x$ is writable (eventually writable, accidentally writable). 
\end{thm}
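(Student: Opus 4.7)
The plan is to follow essentially the same route as Proposition \ref{ITTMcomeager}, using Lemma \ref{mutgen} in the generality already asserted there (for nonmeager Borel sets, not just comeager sets) to locate two mutually Cohen generic reals inside $A$. Once those reals are in hand, the rest of the argument is the intersection argument via mutual genericity, exactly as in the comeager case.

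Concretely, first I would apply Lemma \ref{mutgen} with $\alpha = \Sigma + 1$ to the nonmeager Borel set $A$, obtaining reals $u, v \in A$ such that $u$ is Cohen generic over $L_{\Sigma+1}$ and $v$ is Cohen generic over $L_{\Sigma+1}[u]$. By the product lemma for forcing, the pair $(u,v)$ is then $\mathbb{P} \times \mathbb{P}$-generic over $L_{\Sigma+1}$, where $\mathbb{P}$ denotes Cohen forcing. Since $L_\lambda, L_\zeta, L_\Sigma \subseteq L_{\Sigma+1}$ and each of these models contains $\mathbb{P}$, genericity descends: $(u,v)$ is $\mathbb{P} \times \mathbb{P}$-generic over each of these smaller models as well.

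Next, Lemma \ref{gensubmodels} yields $\lambda^u = \lambda$, $\zeta^u = \zeta$, $\Sigma^u = \Sigma$, and analogously for $v$. Combined with Theorem \ref{ITTMchar}, the reals writable (eventually writable, accidentally writable) from the oracle $u$ are precisely those in $L_\lambda[u]$ (respectively $L_\zeta[u]$, $L_\Sigma[u]$), and similarly for $v$. Since $\lambda$, $\zeta$, $\Sigma$ are limits of admissibles, they are infinite indecomposable ordinals, so the models $L_\lambda$, $L_\zeta$, $L_\Sigma$ are provident and Lemma \ref{mutgensect} applies, giving
\[ L_\lambda[u] \cap L_\lambda[v] = L_\lambda, \quad L_\zeta[u] \cap L_\zeta[v] = L_\zeta, \quad L_\Sigma[u] \cap L_\Sigma[v] = L_\Sigma. \]
Since $x$ is writable (eventually writable, accidentally writable) from every element of $A$, in particular from both $u$ and $v$, we conclude $x \in L_\lambda$ (respectively $L_\zeta$, $L_\Sigma$), and Theorem \ref{ITTMchar} delivers the desired writability (eventual writability, accidental writability) of $x$.

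The main obstacle is simply the descent step: checking that mutual Cohen genericity over the large model $L_{\Sigma+1}$ restricts to mutual genericity over the smaller provident models and that the providence hypothesis in Lemma \ref{mutgensect} is satisfied by $L_\lambda$, $L_\zeta$, $L_\Sigma$. Both points are routine bookkeeping; the genuine work has already been done by Lemma \ref{mutgen}, which extracts the two mutually generic reals from an arbitrary nonmeager Borel set by first passing to a basic open neighborhood in which $A$ is comeager and then exploiting the comeagerness of the Cohen generics over a countable model.
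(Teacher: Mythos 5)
Your proposal is correct, but it takes a genuinely different route from the paper's proof of this theorem. The paper first proves the comeager case (Proposition \ref{ITTMcomeager}) and then handles a nonmeager Borel set $A$ by a soft reduction: since $A$ has the Baire property, there is a finite string $t$ with $A\triangle N_t$ meager, and the $ITTM$-computable map replacing the first $|t|$ bits of an oracle by $t$ pulls $A\cap N_t$ back to a comeager set of oracles from each of which $x$ is still writable (eventually, accidentally writable); the comeager case then applies. You bypass this translation entirely and re-run the mutual-genericity argument directly, which works because Lemma \ref{mutgen} is stated for nonmeager Borel sets and its proof only needs that a nonmeager set meets every comeager set, in particular the comeager sets of Cohen generics over $L_{\Sigma+1}$ and over $L_{\Sigma+1}[u]$; in fact your argument uses neither Borelness nor the Baire property, so it is marginally more general, whereas the paper's reduction is softer and reusable for any computability notion closed under composition with the finite-bit translation, without invoking the genericity machinery for nonmeager sets. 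One small bookkeeping point: $L_{\Sigma+1}$ is not provident ($\Sigma+1$ is a successor, hence decomposable), so rather than applying the product lemma over $L_{\Sigma+1}$ and then descending, it is cleaner to descend the two-step genericity first --- $u$ is Cohen generic over each of $L_{\lambda}, L_{\zeta}, L_{\Sigma}$ and $v$ is Cohen generic over $L_{\lambda}[u], L_{\zeta}[u], L_{\Sigma}[u]$, since every dense set in these models lies in $L_{\Sigma+1}$, respectively in $L_{\Sigma+1}[u]$ --- and then use the equivalence established in the proof of Lemma \ref{mutgensect} (for provident $L_{\alpha}$) to obtain $\mathbb{P}\times\mathbb{P}$-genericity over each of the provident models $L_{\lambda}, L_{\zeta}, L_{\Sigma}$; this is exactly the form in which Lemma \ref{mutgensect} is used in Proposition \ref{ITTMcomeager}, and the rest of your intersection argument then goes through verbatim.
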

\begin{proof}
Since $A$ has the Baire property, there is some finite $t$ such that, for the corresponding basic open set $N_{t}:=\{x|t\subseteq x\}$, $A\triangle N_t$ is meager.
Consequently, $A\cap N_{t}$ is comeager in $N_{t}$. We define a translation function $t:[0,1]\rightarrow N_{t}$, where $t(x)$ is obtained from $x$ by replacing the sequence of the first $|t|$ many bits of $x$ with $t$. Then $range(f)=N_{t}$, and $X:=f^{-1}[A\cap N_{t}]$ is comeager in $[0,1]$. Furthermore, $t$ is clearly $ITTM$-computable.
Now, if some $y$ is writable in every $a\in A$, then it is writable in every $t(x)$ with $x\in X$. So we can compute $y$ from every element of $X$ by first applying $f$ and then the reduction from $N_{t}$ to $y$.
Hence $y$ is writable in all elements of a comeager set, so $y$ is writable by Theorem \ref{ITTMcomeager}. The same argument shows the analogous statement for eventual and accidental writability.
\end{proof}

\section{Infinite time register machines}

Before we consider infinite time register machines, let us briefly mention the unresetting version of these machines. 
Unresetting (or \emph{weak}) $ITRM$s \cite{wITRM}, 
also called weak $ITRM$s ($wITRM$s), work like classical register machines. In particular, they use finitely many registers each of which can store a single natural number, but with transfinite ordinal running time. At limit times, the program line is the limit inferior of the earlier program lines and there is a similar limit rule for the register contents. 
If the limit inferior is infinite, then the computation is undefined.
A real $x$ is $wITRM$-computable if and only if 
$x\in L_{\omega_{1}^{CK}}$ \cite{wITRM}, and the proof  relativizes. 

\begin{lemma}
A real $x$ is $wITRM$-computable in the oracle $y$ if and only if $x\in L_{\omega_{1}^{CK,y}}[y]$.
\end{lemma}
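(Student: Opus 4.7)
The plan is a direct relativization of the proof from \cite{wITRM} that, without an oracle, the $wITRM$-computable reals are exactly those in $L_{\omega_{1}^{CK}}$. Since a $wITRM$ with oracle $y$ differs from a plain $wITRM$ only in having access to the bits of $y$, the argument should transfer with every occurrence of $\omega_{1}^{CK}$ replaced by $\omega_{1}^{CK,y}$ and every $\Delta^{1}_{1}$ claim replaced by its $y$-relativization $\Delta^{1}_{1}(y)$.

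For the direction from right to left, suppose $x\in L_{\omega_{1}^{CK,y}}[y]$. Then $x$ is $\Delta^{1}_{1}(y)$, and the simulation in \cite{wITRM} of a hyperarithmetic definition by a $wITRM$ uses only natural-number registers together with oracle access. Relativizing, a $wITRM$ with oracle $y$ can step through the $y$-hyperarithmetic hierarchy and thereby compute $x$ bit by bit.

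For the converse, suppose $P^{y}$ halts with output $x$. The key step is to bound the halting time strictly below $\omega_{1}^{CK,y}$. A $wITRM$-configuration consists of a program line plus finitely many natural-number register values, so the sequence of configurations is $\Sigma_{1}$-definable over $L_{\alpha}[y]$ for any $y$-admissible $\alpha$: successor steps are primitive recursive and the limit rule takes the liminf of program lines and of each register, with divergence if any such liminf is infinite. Consequently, if $P^{y}$ halts, it halts at some $y$-recursive ordinal, hence below $\omega_{1}^{CK,y}$, and the output $x$ lies in $L_{\omega_{1}^{CK,y}}[y]$.

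The main obstacle is verifying that the liminf divergence rule interacts correctly with the $y$-admissibility of $\omega_{1}^{CK,y}$, so that no halting computation with oracle $y$ can reach time $\omega_{1}^{CK,y}$ or beyond. In the unrelativized case this is what forces the supremum of halting times to coincide with $\omega_{1}^{CK}$; the relativized version uses the analogous fact that there is no $\Sigma_{1}$-definable cofinal map $\omega\to\omega_{1}^{CK,y}$ over $L_{\omega_{1}^{CK,y}}[y]$. This routine but essential check is where the bulk of the technical work from \cite{wITRM} is transferred into the oracle setting.
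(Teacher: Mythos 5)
Your proposal is correct and is essentially the paper's own argument: the paper simply asserts that Koepke's characterization of $wITRM$-computability as membership in $L_{\omega_1^{CK}}$ "relativizes," and your sketch spells out exactly that relativization (replacing $\omega_1^{CK}$ by $\omega_1^{CK,y}$, $\Delta^1_1$ by $\Delta^1_1(y)$, and using $y$-admissibility to bound halting times). No genuinely different route is taken, and the technical work you defer to \cite{wITRM} is the same work the paper defers to.
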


Hence the question is whether there is a set $A$ of positive measure and a real $x\notin L_{\omega_{1}^{CK}}$ such that $x\in L_{\omega_{1}^{CK,y}}[y]$. 
We will use the following result (see \cite[Theorem 9.1.13]{Nies}), where $\leq_{h}$ denotes hyperarithmetic reducibility. 

\begin{thm}[Sacks]{\label{Nies}}
Suppose that $x$ is a real. Then $x\notin\Delta_{1}^{1}$ if and only if $x\notin L_{\omega_{1}^{CK}}$ if and only if $\mu(\{a|x\leq_{h}a\})=0$. 
\end{thm}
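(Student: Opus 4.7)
The statement bundles together two independent ingredients. The equivalence of $x\notin\Delta^1_1$ and $x\notin L_{\omega_1^{CK}}$ is Kleene's theorem identifying the hyperarithmetic reals with the reals of $L_{\omega_1^{CK}}$, and I would just cite it. The substantive content is the equivalence with the third clause, i.e.\ Sacks' theorem: $x$ is hyperarithmetic iff $\{a:x\leq_h a\}$ has positive measure (equivalently, full measure).

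The easy direction is immediate: if $x\in\Delta^1_1$ then $x\in\Delta^1_1(a)$ for every real $a$, so $x\leq_h a$ holds universally and $\mu(\{a:x\leq_h a\})=1>0$. Contrapositively, $x\notin\Delta^1_1$ forces $\mu=0$ once we prove that positive measure implies hyperarithmeticity.

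For that direction I would argue by a density/majority-vote reduction. Fix an effective enumeration of pairs $(e_0,e_1)$ of $\Pi^1_1$ indices such that, relative to a given real $a$, $e_0$ and $e_1$ describe complementary $\Pi^1_1(a)$ sets; each such pair, when it succeeds, pins down a $\Delta^1_1(a)$-definable real, and $x\leq_h a$ iff some pair succeeds and yields $x$. Assuming $\mu(\{a:x\leq_h a\})>0$ and using countable additivity, pick a pair $(e_0,e_1)$ for which $B:=\{a:(e_0,e_1)$ computes $x$ from $a\}$ has positive measure. By the Lebesgue density theorem there is a basic clopen $U=N_t$ in ${}^\omega 2$ with $\mu(B\cap U)/\mu(U)>\tfrac{1}{2}$; translating by the finite string $t$ (which is an $ITTM$-computable homeomorphism of ${}^\omega 2$ preserving the class $\Delta^1_1$), I may assume $\mu(B)>\tfrac{1}{2}$ outright.

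Now I would recover $x$ by a majority vote:
\[
x(n)=i \iff \mu\bigl(\{a\in{}^\omega 2 : \Phi^a_{e_i}\text{ asserts } x(n)=i\}\bigr)>\tfrac{1}{2}.
\]
Because on a set of measure $>\tfrac{1}{2}$ the pair $(e_0,e_1)$ correctly computes $x$, exactly one of $i\in\{0,1\}$ satisfies the inequality and it is the true value. Finally I would invoke the standard fact that for a $\Sigma^1_1$ set coded by an index $e$ and a rational $q$, the statement ``$\mu$ of the coded set is $>q$'' is $\Sigma^1_1$ in $(e,q)$ (and dually $\Pi^1_1$); this follows from the regularity of $\Sigma^1_1$ sets together with the fact that the measure of a $\Sigma^1_1$ set is the supremum of measures of its Borel subsets, plus the arithmetical computability of measures of basic open sets. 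Applying this to both halves of the majority vote yields a $\Delta^1_1$ definition of $x$, so $x\in\Delta^1_1$.

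The main obstacle is the definability bookkeeping in the last step: one has to convince oneself that ``the measure of a $\Sigma^1_1$ set exceeds $q$'' really is $\Sigma^1_1$ uniformly in the index and threshold, so that the majority-vote definition lands in $\Delta^1_1$ rather than a higher level of the hierarchy. Everything else is density-theorem and countable-additivity bookkeeping; once the measurability/complexity of $\Sigma^1_1$-measure is in hand, the argument closes.
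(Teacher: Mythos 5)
Your sketch is correct, and it is worth noting that the paper itself gives no proof of this statement: it is quoted as Sacks' theorem with a reference to Nies's book, and the closest thing to an in-paper proof is the relativized version, Proposition \ref{RelNies}, which follows Kechris \cite{Ke}. Your argument runs along the same opening moves as that proof (countably many hyperarithmetic reduction indices, countable additivity to fix one index with positive measure, Lebesgue density to get relative measure $>\tfrac12$, measurability of the relevant $\Sigma^1_1$/$\Pi^1_1$ sets), but the endgame differs. You close by a two-sided majority vote, using that for the $\Sigma^1_1$ half of the index pair the predicate $\mu(\cdot)>q$ is $\Sigma^1_1$ and for the $\Pi^1_1$ half it is $\Pi^1_1$ (the latter via complements and closure of $\Sigma^1_1$ under number quantifiers), so that the graph of $x$ is simultaneously $\Sigma^1_1$ and $\Pi^1_1$, hence $x\in\Delta^1_1$. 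The paper's Proposition \ref{RelNies} instead forms the $\Pi^1_1$ set $Y$ of ``majority'' oracles, then uses a boundedness argument (the rank of a wellfounded section of a tree computed below $\omega_1^{CK}$) to cut $Y$ down to a nonempty $\Delta^1_1$ set $X$, and defines $x$ by quantifying existentially and universally over $X$. Both routes rest on the same effective measure-theoretic fact from \cite{Ke}; yours trades the $\Sigma^1_1$-boundedness step for the dual complexity computation of the measure predicate, which is a legitimate and standard variant. Two small points to tidy up: the translation by the finite string $t$ should simply be called recursive (the reference to $ITTM$-computability is out of place in this classical setting, though harmless), and you should enumerate all index pairs and build ``the pair succeeds and outputs $x$'' into the definition of $B$, observing that $B$ is measurable as a Boolean combination of analytic and coanalytic sets, since one cannot effectively enumerate only the succeeding pairs.
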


\begin{thm}{\label{wITRM}} 
Suppose that $x$ is a real and $A$ is a set of reals with $\mu(A)>0$ such that $x$ is $wITRM$-computable from every $y\in A$. Then $x$ is $wITRM$-computable. 
\end{thm}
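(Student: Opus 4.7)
The plan is to combine the preceding lemma, which identifies $wITRM$-computability in oracle $y$ with membership in $L_{\omega_1^{CK,y}}[y]$, with Sacks' theorem \ref{Nies} on the measure of the upper hyperarithmetic cone. The whole proof is really a one-step reduction once the two characterizations are lined up, so the argument should be very short.

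Concretely, I would first observe that membership in $L_{\omega_1^{CK,y}}[y]$ is the same as $x\leq_h y$ (the standard fact that the hyperarithmetic-in-$y$ reals are exactly those in $L_{\omega_1^{CK,y}}[y]$); together with the preceding lemma this gives the equivalence
\[ x \text{ is } wITRM\text{-computable in } y \iff x\leq_h y. \]
Hence the hypothesis that $x$ is $wITRM$-computable from every $y\in A$ says exactly that $A\subseteq\{y:x\leq_h y\}$, and the assumption $\mu(A)>0$ upgrades to $\mu(\{y:x\leq_h y\})>0$.

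At this point I would simply invoke the contrapositive of Theorem \ref{Nies}: if $\mu(\{y:x\leq_h y\})>0$ then $x\in L_{\omega_1^{CK}}$. Applying the previous lemma once more in the empty oracle, $x\in L_{\omega_1^{CK}}$ implies that $x$ is $wITRM$-computable, which is the conclusion.

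There is no real obstacle here — the theorem is essentially a translation of Sacks' theorem into the language of $wITRM$s, made possible by the very tight characterization of $wITRM$-computability via $L_{\omega_1^{CK,y}}[y]$. The only small care needed is to be explicit that \emph{positive} measure (rather than just full measure) suffices in the statement of Theorem \ref{Nies}: its contrapositive says that $\mu(\{y:x\leq_h y\})>0$ forces $x\in\Delta^1_1$, which is exactly what is needed.
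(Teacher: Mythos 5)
Your proposal is correct and takes essentially the same route as the paper: identify $wITRM$-computability in the oracle $y$ with $x\leq_{h}y$ via the $L_{\omega_{1}^{CK,y}}[y]$ characterization and then apply (the contrapositive of) Sacks' theorem (Theorem \ref{Nies}). The only cosmetic difference is that the paper first restricts to the measure-one set of oracles $y$ with $\omega_{1}^{CK,y}=\omega_{1}^{CK}$ and argues by contradiction, which your direct use of the relativized equivalence $x\in L_{\omega_{1}^{CK,y}}[y]\Leftrightarrow x\leq_{h}y$ makes unnecessary.
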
 
\begin{proof} 
Since $\mu(\{y|\omega_{1}^{CK,y}=\omega_{1}^{CK}\})=1$, we may assume that $\omega_{1}^{CK,y}=\omega_{1}^{CK}$ and thus $L_{\omega_{1}^{CK,y}}[y]=L_{\omega_{1}^{CK}}[y]$ for all $y\in A$. 
If $y$ is not $wITRM$-computable, then $y$ is not hyperarithmetical \cite{wITRM}. Then $\mu(\{x|y\leq_{h}x\})=0$ by Theorem \ref{Nies}, 
contradicting the assumption $\mu(A)>0$. 
\end{proof}


For the rest of this section, we consider (resetting) infinite time register machines. They
differ from weak $ITRM$s only in their behaviour when the limit inferior is infinite. 
In this case, the register in question is assigned the value 0 and the computation continues. 
This leads to a huge increase in terms of computability strength. An introduction to $ITRM$s can be found in \cite{ITRM}.

A real $x$ is $ITRM$-computable if and only if $x\in L_{\omega_{\omega}^{CK}}$ \cite{OC}.
$x\in L_{\omega_{\omega}^{CK}}$ and the proof relativizes. 

\begin{lemma}
A real $x$ 
is $ITRM$-computable in a real $y$ if and only if 
$x\in L_{\omega_{\omega}^{CK,y}}[y]$. 
\end{lemma}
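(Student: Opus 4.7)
The plan is to relativize the unrelativized characterization of $ITRM$-computability from \cite{OC} (a real is $ITRM$-computable iff it lies in $L_{\omega_\omega^{CK}}$) by replacing $\omega_n^{CK}$ with $\omega_n^{CK,y}$ throughout and adding $y$ to the background constructible hierarchy. Both directions should transfer with only notational changes.

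For the forward direction, I would first recall the central lemma of \cite{OC}: the supremum of halting times of $ITRM$s with exactly $n$ registers is $\omega_n^{CK}$. The proof analyzes configurations of $n$-register $ITRM$-computations and shows, via an argument appealing to $\Sigma_1$-admissibility of $\omega_n^{CK}$, that every non-halting computation enters a recognizable loop before time $\omega_n^{CK}$. The same analysis carried out with the oracle $y$ appended gives halting times bounded by $\omega_n^{CK,y}$, because admissibility of $\omega_n^{CK,y}$ relative to $y$ is exactly the ingredient used, and the configuration dynamics are $\Delta_1$ over $L_{\omega_n^{CK,y}}[y]$. Hence any halting $ITRM^y$-computation terminates below $\omega_\omega^{CK,y}$, and its output $x$, being definable from the computation itself over some $L_\alpha[y]$ with $\alpha<\omega_\omega^{CK,y}$, lies in $L_{\omega_\omega^{CK,y}}[y]$.

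For the converse, I would exhibit a uniform $ITRM$-procedure that, given the oracle $y$, enumerates codes for the levels $L_\alpha[y]$ for $\alpha<\omega_\omega^{CK,y}$ and searches for a code of $x$. The unrelativized version of this construction in \cite{OC} produces codes of $L_\alpha$ using $ITRM$s to run through $\omega_n^{CK}$-recursive well-orderings and evaluate bounded truth; replacing these by $y$-recursive well-orderings (which are available because the $\omega_n^{CK,y}$-recursive well-orderings on $\omega$ are $ITRM^y$-decidable by the usual coding tricks) and computing bounded truth in $L_\alpha[y]$ instead of $L_\alpha$ yields the desired program. Given $x\in L_{\omega_\omega^{CK,y}}[y]$, one waits until the enumerated code contains $x$ and outputs it.

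The main obstacle will be verifying in detail that the admissibility-based halting-time bound of \cite{OC} truly relativizes, i.e. that the dichotomy between stabilization and entering an effectively detectable loop is witnessed at $y$-admissible rather than admissible stages. This is not deep but requires inspecting the Koepke–Miller argument with $y$ added uniformly to the relevant $\Sigma_1$-definitions; once this is confirmed, the remaining steps are routine bookkeeping.
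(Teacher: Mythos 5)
Your proposal is correct and takes essentially the same route as the paper, which offers no separate argument for this lemma beyond citing the unrelativized characterization of $ITRM$-computability from \cite{OC} and remarking that its proof relativizes; your write-up simply makes that relativization explicit (halting-time bounds via $y$-admissibility of the $\omega_n^{CK,y}$ for the forward direction, and oracle-relativized enumeration of the levels $L_\alpha[y]$ for the converse). The only quibble is the indexing of the halting-time bound for $n$-register machines (it is $\omega_{n+1}^{CK}$ rather than $\omega_n^{CK}$ in the Koepke--Miller analysis), which is immaterial since only the supremum $\omega_\omega^{CK,y}$ matters.
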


The question is now whether 
there is a real $x\notin L_{\omega_{\omega}^{CK}}$ and a set $A$ of positive measure 
such that $x\in L_{\omega_{\omega}^{CK,y}}[y]$ for every $y\in A$. 
To show that there is no such real, we first relativize Theorem \ref{Nies}. 

\begin{prop}{\label{RelNies}} 
Suppose that $x,y$ are reals. 
Then $x\notin L_{\omega_{1}^{CK,y}}[y]$ if and only if $\mu(\{a\mid x\leq_{h} a\oplus  y\})=0$. 
\end{prop}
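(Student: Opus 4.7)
The plan is to deduce Proposition \ref{RelNies} from Theorem \ref{Nies} by relativizing Sacks' theorem to the parameter $y$. The first step is to rewrite the right-hand quantity in terms of $y$-hyperarithmetic reducibility. Observe that $x \leq_h a \oplus y$ is by definition equivalent to $x \in \Delta^1_1(a \oplus y)$, i.e.\ $x \in L_{\omega_1^{CK, a \oplus y}}[a \oplus y]$. Since $\Delta^1_1(a \oplus y) = \Delta^1_1(y,a)$, this is the same as saying that $x$ is hyperarithmetic in $a$ with $y$ as a side oracle, i.e.\ $x \leq_h^y a$, where $\leq_h^y$ denotes hyperarithmetic reducibility relativized to $y$. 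So the set $\{a \mid x \leq_h a \oplus y\}$ is exactly $\{a \mid x \leq_h^y a\}$.

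For the forward direction I would argue directly: if $x \in L_{\omega_1^{CK,y}}[y]$, then $x$ is hyperarithmetic in $y$ alone, hence $x \leq_h y \leq_h a \oplus y$ for every real $a$, and the set in question has measure $1$ rather than $0$. Contrapositively, if $\mu(\{a \mid x \leq_h a \oplus y\}) > 0$ then the identification above yields $\mu(\{a \mid x \leq_h^y a\}) > 0$, which is exactly the hypothesis of the relativized Sacks theorem.

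For the reverse direction, the substantive step is to apply Sacks' theorem with $y$ adjoined to the oracle. The proof of Theorem \ref{Nies} as given in \cite[Theorem 9.1.13]{Nies} rests on standard ingredients, the equivalence of $\Delta^1_1$ with $L_{\omega_1^{CK}}$-membership, the Lebesgue measurability of $\Sigma^1_1$ sets, and a basis theorem of Kleene/Gandy type, each of which has a uniform relativization to any real parameter. Reading the same proof with $y$ in place of the empty oracle therefore yields $\mu(\{a \mid x \leq_h^y a\}) > 0 \Rightarrow x \in L_{\omega_1^{CK,y}}[y]$. Combining this with the identification from the first paragraph gives the contrapositive of the remaining implication in Proposition \ref{RelNies}.

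The only potential obstacle is keeping the bookkeeping of the relativization straight, in particular checking that each lemma used in Sacks' argument remains correct with an extra real parameter. Since the hyperarithmetic hierarchy, the $\bf\Sigma^1_1$ measurability theorem, and the standard basis theorems all relativize uniformly, this verification is mechanical, and I would present it by simply appealing to the relativized form of Theorem \ref{Nies} without reproducing the details.
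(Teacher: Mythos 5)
Your proposal is correct and takes essentially the same route as the paper: the nontrivial direction is exactly the relativization to $y$ of Sacks' theorem, and the paper obtains it by rerunning the Kechris proof of Theorem \ref{Nies} with $y$ adjoined to the oracle, which is precisely the relativization you invoke (your identification $x\leq_h a\oplus y$ iff $x\in\Delta^1_1(a\oplus y)$ and the easy measure-one direction match the paper's setup). The only difference is presentational: the paper writes out the relativized argument (the $\Pi^1_1$-in-$y$ measure estimates, Lebesgue density, and the basis-type step producing a nonempty $\Delta^1_1(y)$ set of good oracles) instead of citing the relativized theorem as standard.
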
 
\begin{proof} 
We follow the proof of \cite[Theorem 3.1.1]{Ke}. 
Suppose that $x\notin L_{\omega_{1}^{CK,y}}[y]$. The set $\{a\mid x\leq_{h} a\oplus  y\}$ is $\Pi^1_1$ in $y$. Let us assume that it has positive measure. 
Since there are only countably many hyperarithmetic reductions, there is some hyperarithmetic reduction $P$ such that for a positive measure set of $a$, $P$ reduces $x$ to $a\oplus y$.
 Then there is a rational interval $I$ in which this set has relative measure $>0.5$ by the Lebesgue density theorem. 
The set $\{b\in I\mid P^{a\oplus y}=P^{b\oplus y}\}$ is $\Pi^1_1$ in $a$ and hence measurable. 
We define $Y$ as the set of $a$ with $\mu_I(\{b\in I\mid P^{a\oplus y}=P^{b\oplus y}\})> 0.5$, where $\mu_I(A)=\frac{\mu(A\cap I)}{\mu(I)}$ denotes the relative measure. Then $Y$ is $\Pi^1_1$ in $y$ by \cite[Theorem 2.2.3]{Ke}. 
The set $Z:=\{z\in {}^{\omega}2\mid \omega_1^{CK,z}=\omega_1^{CK}\}$ has measure $1$ by \cite[Corollary 9.1.15]{Nies}. 
Since $\mu(Y)>0.5$ there is some $z\in Y$ with $\omega_1^{CK,z}=\omega_1^{CK}$. 
Since $Y$ is $\Pi^1_1$ in $y$, there is a tree $T$ is a tree computable in $y$ such that $z\in Y$ if and only if $T_z$ is wellfounded, for all $z\in {}^{\omega}2$. 
Since $Z$ has measure $1$, there is some $z\in Y\cap Z$. 
Then $\alpha:=\mathrm{rank}(T_z)<\omega_1^{CK,z}=\omega_1^{CK}$ by \cite[Theorem 4.4]{Hj}. 
Since $\alpha$ is computable, the set $X:=\{z\in {}^{\omega}2\mid \mathrm{rank}(T_z)\leq\alpha\}$ is a nonempty $\Delta^1_1$ in $y$ subset of $Y$. 
Then $P^{a\oplus y}=x$ for all $a\in Y$. 
Then $z=x$ if and only if $P^{u\oplus y}=z$ for some (for all) $u\in X$. Since $P^{u\oplus y}$ halts 
for all $u\in X$, $P^{u\oplus y}=z$ can be equivalently replaced by the statement that for every halting run of $P$ on input $u\oplus y$ the output is $z$. Thus $x$ is hyperarithmetic in $y$. 
\end{proof}

\begin{lemma}{\label{RelAdm}} Suppose that $x$ is a real. 
\begin{enumerate} 
\item $\mu(\{y\in {}^{\omega}2\mid \omega_{1}^{CK,x\oplus y}=\omega_{1}^{CK,x}\})=1$. 
\item $\mu(\{y\in{}^{\omega}2 \mid \forall{i\in\omega}\ \omega_{i}^{CK,y}=\omega_{i}^{CK}\})=1$. 
\item $\mu(\{y\in{}^{\omega}2 \mid \omega_{\omega}^{CK,y}=\omega_{\omega}^{CK}\})=1$. 
\end{enumerate} 
\end{lemma}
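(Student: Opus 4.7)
I plan to derive part (1) as the $x$-relativization of Sacks' classical theorem $\mu(\{y:\omega_1^{CK,y}=\omega_1^{CK}\})=1$ from \cite[Corollary 9.1.15]{Nies}; since that proof uses only measure theory together with the $\Delta^1_1$/hyperarithmetic characterization of Theorem \ref{Nies}, and the $x$-relative version of Theorem \ref{Nies} is exactly Proposition \ref{RelNies}, the entire argument transfers verbatim. For a self-contained route one would assume $B:=\{y:\omega_1^{CK,x\oplus y}>\omega_1^{CK,x}\}$ has positive measure, note that for each $y\in B$ the join $x\oplus y$ computes a wellorder of order type $\geq\omega_1^{CK,x}$ (and hence a real not in $L_{\omega_1^{CK,x}}[x]$), pin a single Turing functional $\Phi$ witnessing this on a positive measure subset by countability, and then use a Lebesgue density argument to produce a single real $z\notin L_{\omega_1^{CK,x}}[x]$ with $\mu(\{y:z\leq_h x\oplus y\})>0$, contradicting Proposition \ref{RelNies}.

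For part (2) I will induct on $i<\omega$. The base case $i=1$ is (1) with $x=\emptyset$. Assuming inductively that $A_i:=\{y:\omega_i^{CK,y}=\omega_i^{CK}\}$ has full measure, fix the $<_L$-least real $c_i\in L_{\omega_i^{CK}+\omega}$ coding a wellorder of order type $\omega_i^{CK}$, so that $\omega_1^{CK,c_i}=\omega_{i+1}^{CK}$ (the first admissible that sees the ordinal $\omega_i^{CK}$). Applying (1) with parameter $c_i$ yields a full measure set $B_i$ on which $\omega_1^{CK,c_i\oplus y}=\omega_{i+1}^{CK}$. For $y\in A_i\cap B_i$ we have $c_i\in L_{\omega_i^{CK}+\omega}[y]$, so $L_\alpha[c_i\oplus y]=L_\alpha[y]$ for every $\alpha\geq\omega_i^{CK}+\omega$; consequently the $y$- and $(c_i\oplus y)$-admissibles above $\omega_i^{CK}$ coincide, and the next $y$-admissible after $\omega_i^{CK,y}=\omega_i^{CK}$ equals $\omega_1^{CK,c_i\oplus y}=\omega_{i+1}^{CK}$. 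Intersecting the countably many full measure sets $A_i\cap B_i$ yields (2).

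Part (3) is immediate from (2): on the full measure set produced by (2) one has $\omega_\omega^{CK,y}=\sup_{i<\omega}\omega_i^{CK,y}=\sup_{i<\omega}\omega_i^{CK}=\omega_\omega^{CK}$. The only genuinely delicate step is the Fubini/density maneuver in the self-contained route for (1) which replaces the family $y\mapsto\Phi^{x\oplus y}$ by a single witness $z$; parts (2) and (3) amount to bookkeeping in the admissibility hierarchy, with the observation that $c_i$ lies low enough in $L$ to be absorbed into $L_\alpha[y]$ once $\alpha\geq\omega_i^{CK}+\omega$.
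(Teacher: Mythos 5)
Your overall architecture is the paper's: part (1) is the relativization to $x$ of the Sacks/Nies theorem, part (2) is proved by applying part (1) with the canonical code $c_i$ for $\omega_i^{CK}$ as parameter, intersecting countably many measure-one sets, and inducting on $i$ using $\omega_{i+1}^{CK,y}=\omega_1^{CK,c_i\oplus y}$ for $y$ with $\omega_i^{CK,y}=\omega_i^{CK}$, and part (3) is the supremum remark. This is exactly how the paper argues (its $c(i)$, $X_i$, and $X=\bigcap_i X_i$), and your absorption argument $L_\alpha[c_i\oplus y]=L_\alpha[y]$ for $\alpha\geq\omega_i^{CK}+\omega$ is the right justification of the key identity; just note that $c_i\in L_{\omega_i^{CK}+\omega}$ itself requires the condensation/pointwise-definability argument for the least level of $L$ satisfying the relevant theory, which is the same argument the paper spells out (in relativized form) inside its proof of part (1).

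The genuine problem is your ``self-contained route'' for (1). First, the property you extract --- that $x\oplus y$ hyperarithmetically computes \emph{some} real not in $L_{\omega_1^{CK,x}}[x]$ --- is vacuous: $L_{\omega_1^{CK,x}}[x]$ contains only countably many reals, so for all but countably many $y$ one may take $z_y=y$, and hence no contradiction with $\mu(B)>0$ can come from that property alone. Second, even if you keep the sharper property that a fixed reduction $\Phi$ (it should be a hyperarithmetic reduction, not a Turing functional) sends $x\oplus y$ to a wellorder of type $\geq\omega_1^{CK,x}$ on a positive-measure set, the outputs $\Phi^{x\oplus y}$ vary with $y$: they could be pairwise distinct, each attained only on a null set, while their union still has positive measure, so no Lebesgue density argument produces a single $z\notin L_{\omega_1^{CK,x}}[x]$ with $\mu(\{y\mid z\leq_h x\oplus y\})>0$; density arguments of this kind (as in the proof of Proposition \ref{RelNies}) need the target real fixed in advance. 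The missing idea is the canonical single witness: if $\omega_1^{CK,x\oplus y}>\omega_1^{CK,x}$, then the $<_{L[x]}$-least code $c(x)$ for $\omega_1^{CK,x}$ lies in $L_{\omega_1^{CK,x}+\omega}[x]\subseteq L_{\omega_1^{CK,x\oplus y}}[x\oplus y]$, so \emph{every} bad $y$ satisfies $c(x)\leq_h x\oplus y$, while $c(x)\not\leq_h x$; Proposition \ref{RelNies} then makes the bad set null. This is exactly the device you deploy (as $c_i$) in part (2), and it is what the paper does for part (1); your fallback to citing \cite{Nies} and relativizing is acceptable, but the explicit argument you offer in its place does not close.
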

\begin{proof}
1. Let $c(x)$ denote the $<_{L[x]}$-least real $r$ which codes a well-ordering of length $\omega_{1}^{CK,x}$. Now suppose that $y$ is such that $\omega_{1}^{CK,x\oplus y}>\omega_{1}^{CK,x}$.
Then $x\in L_{\omega_{1}^{CK,x}}[x]\in L_{\omega_{1}^{CK,x\oplus y}}[x\oplus y]$ and $L_{\omega_{1}^{CK,x\oplus y}}[x]\subseteq L_{\omega_{1}^{CK,x\oplus y}}[x\oplus y]$. 
Let $H$ denote the hull of $x$ in $L_{\omega_{1}^{CK,x}}[x]$ for the canonical Skolem functions. 
Then $H=L_{\omega_{1}^{CK,x}}[x]$ by condensation \cite[Theorem 1.16]{SZ} and since $L_{\omega_{1}^{CK,x}}[x]$ is the least model of $KP$ containing $x$. 
Then there is a bijection between $\omega$ and $\omega_{1}^{CK,x}$ and hence a code $c$ for $\omega_{1}^{CK,x}$ in $L_{\omega_{1}^{CK,x}+\omega}[x]$. 
As $c(x)\leq_{L[x]}c$ by the minimality of $c(x)$, we 
have $c(x)\in L_{\omega_{1}^{CK,x\oplus y}}[x\oplus y]$ and $c(x)\leq_{h}x\oplus y$. 
Moreover $c(x)\not\leq_{h}x$ implies $\mu(\{y\mid c(x)\leq_{h}x\oplus y\})=0$ by Theorem \ref{RelNies}. 

2. Let $c(i)$ denote the $<_{L}$-least code for $\omega_{i}^{CK}$. We have $\mu(\{x\in{}^{\omega}2\mid \omega_{1}^{CK,x\oplus y}=\omega_{1}^{CK,y}\})=1$ for all $y\in{}^{\omega}2$ by Lemma \ref{RelAdm}. 
Then 
$$X_{i}:=\{x\in{}^{\omega}2 \mid  \omega_{1}^{CK,x\oplus c(i)}=\omega_{1}^{CK,c(i)}\}$$ has measure $1$ for all $i\in\omega$ and hence $X=\bigcap_{i\in\omega}X_{i}$ has measure $1$. 
We claim that $\omega_{i}^{CK,x}=\omega_{i}^{CK}$ for all $x\in X$. To see this, let us denote by $c(i,y)$ the $<_{L}$-least code for $\omega_{i}^{CK,y}$ for
$y\in{}^{\omega}2$. Then $c(0,y)$ is a code for $\omega$ and $\omega_{1}^{CK,y\oplus c(i,y)}=\omega_{i+1}^{y}$ for all reals $y$. 
Now suppose that $x\in X$. Since $x\in X_{1}$, we have 
$\omega_{1}^{CK,x}=\omega_{1}^{CK,x\oplus c(0,x)}=\omega_{1}^{CK,c(0)}=\omega_{1}^{CK}$. 
If $\omega_{i}^{CK,x}=\omega_{i}^{CK}$, then $c(i,x)=c(i)$. 
Since $x\in X_{i+1}$, we have
$\omega_{i+1}^{CK,x}=\omega_{1}^{CK,c(i,x)\oplus x}=\omega_{1}^{CK,c(i)\oplus x}=\omega_{1}^{CK,x\oplus c(i)}=\omega_{1}^{CK,c(i)}=\omega_{i+1}^{CK}$. 
Hence $\omega_{i}^{CK,x}=\omega_{i}^{CK}$ for all $i\in\omega$. 

3. This follows from the previous claim, since $\omega_{\omega}^{CK,x}=\sup_{i\in\omega} \omega_i^{CK,x}$. 
\end{proof}


We can now show that $ITRM$-computability relative to oracles in a set of positive measure implies $ITRM$-computability.

\begin{thm}{\label{ITRMrandom}}
Suppose that $x$ is a real 
and $A$ is a set of positive measure such that $x$ is $ITRM$-computable from all $y\in A$. Then $x$ is $ITRM$-computable. 
\end{thm}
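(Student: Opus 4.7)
The plan is to iterate the relativized Sacks theorem (Proposition \ref{RelNies}) up the admissibility hierarchy using an ordinal-coding parameter. By the characterization of $ITRM$-computability it suffices to show $x\in L_{\omega_{\omega}^{CK}}$, and the key observation is that $L_{\omega_{\omega}^{CK}}[y]=\bigcup_{k<\omega}L_{\omega_{k}^{CK}}[y]$, so after restricting to a measure-one set of oracles on which $\omega_{\omega}^{CK,y}=\omega_{\omega}^{CK}$, a countable additivity argument reduces matters to a single finite level $k$.

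The steps I would carry out are the following. First, use Lemma \ref{RelAdm}(3) to pass to a positive-measure $A'\subseteq A$ on which $\omega_{\omega}^{CK,y}=\omega_{\omega}^{CK}$, so that $x\in L_{\omega_{\omega}^{CK}}[y]$ for every $y\in A'$. Writing $L_{\omega_{\omega}^{CK}}[y]=\bigcup_{k}L_{\omega_{k}^{CK}}[y]$ and using countable additivity, fix $k\geq 1$ for which $B:=\{y\in A'\mid x\in L_{\omega_{k}^{CK}}[y]\}$ has positive measure. Next, let $c$ be the $<_{L}$-least real coding the ordinal $\omega_{k-1}^{CK}$ as a well-ordering of $\omega$ (with the convention $c=0$ when $k=1$); then $c\in L_{\omega_{k}^{CK}}$ and $\omega_{1}^{CK,c}=\omega_{k}^{CK}$. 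Applying Lemma \ref{RelAdm}(1) with $c$ in place of $x$ yields $\omega_{1}^{CK,c\oplus y}=\omega_{k}^{CK}$ for a measure-one set of $y$, and intersecting with $B$ produces a positive-measure set of $y$ on which
\[
L_{\omega_{k}^{CK}}[y]\subseteq L_{\omega_{k}^{CK}}[c\oplus y]=L_{\omega_{1}^{CK,c\oplus y}}[c\oplus y],
\]
so $x\leq_{h}c\oplus y$. The contrapositive of Proposition \ref{RelNies} applied with oracle $c$ then gives $x\in L_{\omega_{1}^{CK,c}}[c]=L_{\omega_{k}^{CK}}[c]=L_{\omega_{k}^{CK}}$, where the last equality uses $c\in L_{\omega_{k}^{CK}}$ together with admissibility of $\omega_{k}^{CK}$. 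Hence $x\in L_{\omega_{\omega}^{CK}}$, i.e., $x$ is $ITRM$-computable.

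The main technical point is the choice of the parameter $c$: one must ensure both that a code of $\omega_{k-1}^{CK}$ actually lies in $L_{\omega_{k}^{CK}}$, which is automatic since $\omega_{k-1}^{CK}$ is countable in that level, and that the least admissible above $\omega_{k-1}^{CK}$ in $L[c]$ is precisely $\omega_{k}^{CK}$, which follows from the very definition of $\omega_{k}^{CK}$ as the $k$-th admissible. Everything else is a routine combination of the tools already developed in the excerpt, and no new induction on $k$ is required beyond the single invocation of Proposition \ref{RelNies}.
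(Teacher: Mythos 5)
Your argument is correct and follows essentially the same route as the paper's proof: decompose into the finite levels $L_{\omega_k^{CK}}[y]$, extract a positive-measure piece at a single level $k$, take the $<_L$-least code $c$ of $\omega_{k-1}^{CK}$ as a parameter, and conclude via the relativized Sacks theorem (Proposition \ref{RelNies}). The only point you gloss is the measurability of the level sets in the countable-additivity step (the paper observes they are provably $\Delta^1_2$, hence measurable), which is harmless since positive outer measure already contradicts the null set provided by Proposition \ref{RelNies}; likewise your appeal to Lemma \ref{RelAdm}(1) can be dropped, as $\omega_1^{CK,c\oplus y}\geq\omega_1^{CK,c}=\omega_k^{CK}$ holds for every $y$.
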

\begin{proof} 
It is sufficient to show that $\bigcap_{y\in A}L_{\omega_{\omega}^{CK,y}}[y]=L_{\omega_{\omega}^{CK}}$.
Suppose that $x\in\bigcap_{y\in A}L_{\omega_{\omega}^{CK,y}}[y]\setminus L_{\omega_{\omega}^{CK}}$. 
Then for each $y\in A$, there is a least $i(y)\geq 1$ with $x\in L_{\omega_{i}^{CK,y}}[y]$. Let $A_{j}:=\{y\in A\mid i(y)=j\}$ for $j\in\omega$. 
Then $A=\bigcup_{j\in\omega}A_{j}$ and since the sets $A_j$ are provably $\Delta^1_2$, they are measurable by \cite[Exercise 14.4]{Kanamori}. 
Hence $\mu(A_k)>0$ for some $k\geq 1$. 
If $k=1$, then $x\in L_{\omega_1^{CK,y}}[y]$ for all $y\in A_1$ and $\mu(A_1)>0$, so $x$ is $ITRM$-computable. 
Suppose that $k=j+1$. Let $c$ denote the $<_{L}$-least code for a wellorder of length 
$\omega_{j}^{CK}$. 
Then there is a $\Sigma_1$ over $L_{\omega_{j}^{CK,y}}[y]$ 
partial surjection of $\omega$ onto $\omega_{j}^{CK}$ for all $y\in A_k$ and 
hence $c\in L_{\omega_{j}^{CK,y}+1}[y]$. 
Then $x\in L_{\omega_{k}^{CK,y}}[y]=L_{\omega_{1}^{CK,c\oplus y}}[c\oplus y]$ 
and hence $x\leq_{h} c\oplus y$ for $y\in A_{k}$. 
Then $x\in L _{\omega_{1}^{CK,c}}[c]=L_{\omega_{k}^{CK}}\subseteq L_{\omega_{\omega}^{CK}}$ by Theorem \ref{RelNies}, since $\mu(A_k)>0$. 
\end{proof}

Let us call a real $x$ \emph{$ITRM$-extracting} if and only if there is a real $y$ which is not $ITRM$-computable from $x$, but the set of reals $z$ such that $y$ is $ITRM$-computable from $x\oplus z$ has positive measure. 
A slight generalization of the above ideas shows that there are also no extracting reals for $ITRM$s, in contrast to the case of $OTM$s, where this is independent from $ZFC$. 

\begin{lemma}{\label{intersect}} 
Suppose that $x,y$ are reals and $\omega_{j}^{CK}=\omega_{j}^{CK,y}$ for all $j\in\omega$. 
Suppose that $i\in\omega$ and $c(i)$ is the $<_L$-least code for $\omega_{i}^{CK}$. 
Then $x\notin L_{\omega_{i+1}^{CK,y}}[y]$ if and only if $\mu(\{z\mid x\leq_{h} z\oplus y\oplus c(i)\})=0$. 
\end{lemma}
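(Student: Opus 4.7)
The plan is to reduce the statement to the already established relativized Sacks theorem (Proposition \ref{RelNies}) by absorbing the parameter $c(i)$ into the oracle. The key observation is that with the oracle $y':=y\oplus c(i)$, the hyperarithmetic reals in $y'$ coincide with $L_{\omega_{i+1}^{CK,y}}[y]$, after which the claim is essentially Proposition \ref{RelNies} applied to the pair $(x,y')$.

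First, I would compute $\omega_1^{CK,y\oplus c(i)}$. Recall from the proof of Lemma \ref{RelAdm} that $\omega_1^{CK,\,y\oplus c(i,y)}=\omega_{i+1}^{CK,y}$, where $c(i,y)$ denotes the $<_L$-least code for $\omega_i^{CK,y}$. The hypothesis $\omega_i^{CK}=\omega_i^{CK,y}$ implies $c(i,y)=c(i)$, so
\[
\omega_1^{CK,\,y\oplus c(i)}=\omega_{i+1}^{CK,y}.
\]
Next, since $c(i)$ is the $<_L$-least code for $\omega_i^{CK}$, we have $c(i)\in L_{\omega_i^{CK}+1}\subseteq L_{\omega_{i+1}^{CK}}=L_{\omega_{i+1}^{CK,y}}\subseteq L_{\omega_{i+1}^{CK,y}}[y]$. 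Consequently, adjoining $c(i)$ to the oracle does not enlarge the model:
\[
L_{\omega_{i+1}^{CK,y}}[y\oplus c(i)]=L_{\omega_{i+1}^{CK,y}}[y].
\]
Combining the two displayed equations gives $L_{\omega_1^{CK,\,y\oplus c(i)}}[y\oplus c(i)]=L_{\omega_{i+1}^{CK,y}}[y]$, which is the desired identification of the hyperarithmetic reals in $y'=y\oplus c(i)$ with $L_{\omega_{i+1}^{CK,y}}[y]$.

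Finally, I would invoke Proposition \ref{RelNies} with oracle $y'$:
$x\notin L_{\omega_1^{CK,y'}}[y']$ if and only if $\mu(\{a\mid x\leq_{h}a\oplus y'\})=0$. Using the identification from the previous paragraph and unfolding $y'=y\oplus c(i)$, this reads
\[
x\notin L_{\omega_{i+1}^{CK,y}}[y]\iff \mu(\{z\mid x\leq_{h}z\oplus y\oplus c(i)\})=0,
\]
which is exactly the claim. The only real obstacle I anticipate is the careful bookkeeping in the second paragraph, i.e.\ verifying that $c(i)$ is available inside $L_{\omega_{i+1}^{CK,y}}[y]$ and that $\omega_1^{CK,y\oplus c(i)}$ really equals $\omega_{i+1}^{CK,y}$ under the hypothesis on the admissibles of $y$; beyond these computations, everything reduces to a direct application of Proposition \ref{RelNies}.
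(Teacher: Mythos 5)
Your proposal is correct and follows essentially the same route as the paper: both identify $\omega_1^{CK,y\oplus c(i)}=\omega_{i+1}^{CK,y}$ under the hypothesis $\omega_j^{CK,y}=\omega_j^{CK}$, observe that $L_{\omega_{i+1}^{CK,y}}[y]=L_{\omega_1^{CK,y\oplus c(i)}}[y\oplus c(i)]$ since $c(i)$ already lies in that level, and then apply Proposition \ref{RelNies} to the oracle $y\oplus c(i)$. The only nitpick is your unargued claim that $c(i)\in L_{\omega_i^{CK}+1}$, which is stronger than needed; it suffices (and is what the paper's Lemma \ref{RelAdm} argument provides) that $c(i)$ appears in $L_{\omega_{i+1}^{CK,y}}[y]$, e.g.\ by $c(i)\in L_{\omega_i^{CK}+\omega}$.
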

\begin{proof}
Since $\omega_{j}^{CK}=\omega_{j}^{CK,y}$ for all $j\in\omega$, 
we have $\omega_{i+1}^{CK,y}=\omega_{1}^{CK,c(i)\oplus y}$. 
Then $x\in L_{\omega_{i+1}^{CK,y}}[y]\subseteq L_{\omega_{1}^{CK,y\oplus c(i)}}[y\oplus c(i)]$ implies that $x\leq_{h} y\oplus c(i)$.
For the other direction, suppose that $x\notin L_{\omega_{i+1}^{CK,y}}[y]=L_{\omega_{1}^{CK,y\oplus c(i)}}[y\oplus c(i)]$. Then $\{z\mid x\leq_{h} z\oplus y\oplus c(i)\}=\{z\mid x\leq_{h}z\oplus(y\oplus c(i))\}$ has 
measure $0$ by Theorem \ref{RelNies} applied to $y\oplus c(i)$. 
\end{proof}

\begin{prop}{\label{noITRMextractings}}
There is no $ITRM$-extracting real.
\end{prop}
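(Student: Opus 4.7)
The plan is to relativize the argument of Theorem \ref{ITRMrandom} to an arbitrary real oracle $x$. Recall that a real $y$ is $ITRM$-computable from a real $w$ if and only if $y\in L_{\omega_{\omega}^{CK,w}}[w]$. So it suffices to prove the following: for every real $x$ and every real $y$, if $A=\{z\mid y\in L_{\omega_{\omega}^{CK,x\oplus z}}[x\oplus z]\}$ has positive measure, then $y\in L_{\omega_{\omega}^{CK,x}}[x]$.

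First I would relativize the part 2 of Lemma \ref{RelAdm} to $x$. Writing $c(i,x)$ for the $<_{L[x]}$-least code for $\omega_i^{CK,x}$, the recursion $\omega_{i+1}^{CK,x\oplus z}=\omega_1^{CK,c(i,x\oplus z)\oplus x\oplus z}$ combined with part 1 of Lemma \ref{RelAdm} applied successively to $x$, $x\oplus c(1,x)$, $x\oplus c(2,x)$, etc., yields
\[ \mu(\{z\mid \omega_i^{CK,x\oplus z}=\omega_i^{CK,x}\text{ for all }i\in\omega\})=1. \]
Intersecting with $A$ we may assume that $\omega_i^{CK,x\oplus z}=\omega_i^{CK,x}$ for all $i\in\omega$ and all $z\in A$, and in particular $c(i,x\oplus z)=c(i,x)$ for all such $z$.

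Next, define $i(z)\geq 1$ for $z\in A$ to be least with $y\in L_{\omega_{i(z)}^{CK,x\oplus z}}[x\oplus z]$, and set $A_j=\{z\in A\mid i(z)=j\}$. Each $A_j$ is provably $\Delta^1_2$ in $(x,y)$ and hence Lebesgue measurable by \cite[Exercise 14.4]{Kanamori}, and $A=\bigcup_j A_j$, so some $A_k$ has positive measure. If $k=1$, then $y\leq_h x\oplus z$ for a positive measure set of $z$, so by Proposition \ref{RelNies} applied with oracle $x$ we obtain $y\in L_{\omega_1^{CK,x}}[x]\subseteq L_{\omega_\omega^{CK,x}}[x]$. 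If $k=j+1$, then for each $z\in A_k$ the chosen agreement of admissibles gives
\[ y\in L_{\omega_k^{CK,x\oplus z}}[x\oplus z]=L_{\omega_1^{CK,c(j,x)\oplus x\oplus z}}[c(j,x)\oplus x\oplus z], \]
so $y\leq_h z\oplus(x\oplus c(j,x))$ for a positive measure set of $z$. Applying Proposition \ref{RelNies} to the oracle $x\oplus c(j,x)$, we conclude $y\in L_{\omega_1^{CK,x\oplus c(j,x)}}[x\oplus c(j,x)]=L_{\omega_{j+1}^{CK,x}}[x]\subseteq L_{\omega_\omega^{CK,x}}[x]$.

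In either case $y$ is $ITRM$-computable from $x$, contradicting the hypothesis that $x$ witnesses $y$ as an extracting real. I do not expect a serious obstacle: the proof is essentially a uniform relativization of Theorem \ref{ITRMrandom}, and the only point requiring a little care is verifying that Lemma \ref{RelAdm} remains valid when the base oracle is $x$ instead of $0$, which follows by induction because $c(j,x)\in L_{\omega_j^{CK,x}+1}[x]$ plays the same role as $c(j)$ in the original argument.
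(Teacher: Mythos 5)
Your proof is correct and follows essentially the same route as the paper: the paper's own argument is likewise a relativization to the oracle $x$ of Theorem \ref{ITRMrandom}, combining Proposition \ref{RelNies} with the measure-one agreement of relativized admissibles (packaged there via Lemma \ref{intersect}). The only blemish is your incidental claim that $c(i,x\oplus z)=c(i,x)$, which need not hold literally for the $<_{L[x\oplus z]}$-least code, but it is never actually used --- all your argument needs is $\omega_{i+1}^{CK,x\oplus z}=\omega_{1}^{CK,c(i,x)\oplus x\oplus z}$ on the chosen measure-one set, which the agreement of admissibles already gives.
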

\begin{proof} Assume for a contradiction that $x$ is $ITRM$-extracting, witnessed by a real $y$. Then $y\notin L_{\omega_{\omega}^{CK,x}}[x]$ and $y\in L_{\omega_{\omega}^{CK,x\oplus z}}[x\oplus z]$ for a set of reals $z$ of positive measure. 
We have $y\nleq_{h}c(i)\oplus x$ if and only if $\mu(\{z\mid y\leq_{h}z\oplus x\oplus c(i)\})=0$ for all $i\in\omega$ by Lemma \ref{intersect}. Hence 
$$
\begin{array}{lll} 
   y\notin L_{\omega_{\omega}^{CK,x}}[x] & \Leftrightarrow & \forall{i\in\omega}\ y\notin L_{\omega_{i}^{CK,x}}[x] \nonumber \\
   & \Leftrightarrow & \forall{i\in\omega}\ y\nleq_{h}c(i)\oplus x \\ 
   & \Leftrightarrow & \forall{i\in\omega}\ \mu(\{z\mid y\leq_{h}c(i)\oplus z\oplus x\})=0 \\
   & \Leftrightarrow & \forall{i\in\omega}\ \mu(\{z\mid y\in L_{\omega_{i}^{CK,z\oplus x}}[z\oplus x]\})=0 \\ 
   & \Leftrightarrow & \mu(\{z\mid y\in L_{\omega_{\omega}^{z\oplus x}}[z\oplus x]\})=0 \\ 
\end{array}
$$
contradicting the assumption on $y$. 
\end{proof}

\begin{remark} A similar strategy works for the other machine types considered in this paper besides $OTM$s and $ORM$s and the arguments relativize in a straightforward manner. 
\end{remark}

We now prove an analogous result for nonmeager Borel sets of oracles. 



\begin{lemma}{\label{cohenadmlimpres}} 
\begin{enumerate} 
\item If $g$ is Cohen generic over $L_{\omega_{\omega}^{CK}}$, then $\omega_{\omega}^{CK,g}=\omega_{\omega}^{CK}$. 
\item If $g$ is Cohen generic over $L_{\omega_{i}^{CK}+1}$, then $\omega_{i}^{CK,g}=\omega_{i}^{CK}$. 
\end{enumerate} 
\end{lemma}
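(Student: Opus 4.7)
My plan is to reduce both parts to a single claim: for every ordinal $\beta$ with $\omega < \beta \leq \omega_i^{CK}$ (respectively $\beta \leq \omega_\omega^{CK}$), the set $L_\beta$ is admissible if and only if $L_\beta[g]$ is admissible. Granting this claim, the $g$-admissibles at or below $\omega_i^{CK}$ coincide with $\{\omega_1^{CK},\ldots,\omega_i^{CK}\}$, yielding $\omega_i^{CK,g} = \omega_i^{CK}$ and proving~(2). For~(1), a Cohen generic $g$ over $L_{\omega_\omega^{CK}}$ is in particular generic over each $L_{\omega_i^{CK}+1}$ (since $L_{\omega_i^{CK}+1} \subseteq L_{\omega_\omega^{CK}}$), so applying~(2) gives $\omega_\omega^{CK,g} = \sup_i \omega_i^{CK,g} = \sup_i \omega_i^{CK} = \omega_\omega^{CK}$.

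For the forward direction ($L_\beta$ admissible implies $L_\beta[g]$ admissible), I would invoke the standard preservation of admissibility by small forcing: Cohen forcing $\mathbb{P}$ lies in $L_\omega \subseteq L_\beta$, and by Lemma~\ref{delta0forcing} the $\mathbb{P}$-forcing relation preserves $\Sigma_1$-complexity. A putative failure of $\Sigma_1$-collection in $L_\beta[g]$ can be pulled back via names to a $\Sigma_1$-statement over $L_\beta$ (choosing names for witnesses via the maximality principle), and the bound obtained from $\Sigma_1$-collection in $L_\beta$ is then pushed forward through the interpretation by $g$ to bound the original witnesses.

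The main obstacle will be the reverse direction: ruling out new $g$-admissibles below $\omega_i^{CK}$. Assume $L_\beta$ is not admissible, witnessed by a $\Sigma_1$-formula $\phi(x,y) \equiv \exists z\,\psi(x,y,z)$ (with $\psi$ a $\Delta_0$-formula) and some $\gamma < \beta$ such that the induced partial function has range cofinal in $\beta$. My idea is to simulate $\phi$ inside $L_\beta[g]$ via the $\Sigma_1$-formula
\[
\phi^L(x,y) \;:=\; y \in L \;\wedge\; \exists z\,(z \in L \wedge \psi(x,y,z)).
\]
Because the ordinals of $L_\beta[g]$ are exactly those below $\beta$, a constructible set of rank $<\beta$ already lies in $L_\beta$, so $L \cap L_\beta[g] = L_\beta$; hence $L_\beta[g] \models \phi^L(x,y)$ if and only if $L_\beta \models \phi(x,y)$ for $x,y \in L_\beta$. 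If $L_\beta[g]$ were admissible, applying $\Sigma_1$-collection to $\phi^L$ and $\gamma$ would produce some $b \in L_\beta[g]$ of rank $\delta < \beta$ containing, for each $x < \gamma$, a witness $y$ with $\phi^L(x,y)$; but $\phi^L$ forces $y \in L$, so all such witnesses lie in $L_\delta[g] \cap L = L_\delta \in L_\beta$, contradicting cofinality of the range of $f$ in $\beta$. Hence $L_\beta[g]$ is not admissible, completing the claim.
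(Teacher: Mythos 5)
Your overall route is essentially the paper's: the paper proves exactly your forward direction -- if $\alpha$ is admissible and $g$ is Cohen generic over $L_{\alpha+1}$, then $L_{\alpha}[g]$ is admissible -- by citing \cite[Theorem 10.1]{Ma} rather than reproving it, leaves the converse (that $g$-admissible ordinals are admissible) implicit as folklore, and derives (1) from (2) via $\omega_{\omega}^{CK,g}=\sup_i\omega_i^{CK,g}$, just as you do. The only substantive differences are that you sketch the preservation argument from Lemma \ref{delta0forcing} instead of citing it, and that you make the converse explicit. One caveat on your forward sketch: the maximality (fullness) principle is not available over a mere admissible set, so you cannot ``choose names for witnesses'' that way; the standard repair is to apply $\Sigma_1$-collection in $L_{\beta}$ to the statement ``there exist $q\leq p$ and a name $\rho\in L_{\eta}$ with $q\Vdash\varphi(\tau,\rho)$'', which is $\Sigma_1$ by Lemma \ref{delta0forcing} -- or simply to quote \cite{Ma} as the paper does.

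Your reverse direction, however, contains a step that fails as written, coming from a conflation of von Neumann rank with $L$-level. First, ``a constructible set of rank $<\beta$ already lies in $L_{\beta}$'' is false: a real coding $L_{\omega_1^{CK}}$ has rank $\omega$ but is not in $L_{\omega_1^{CK}}$. Indeed $L\cap L_{\beta}[g]=L_{\beta}$ with the true $L$ is wrong in general, since $g$ is only generic over a countable level and may itself be constructible. What is true, and what your formula $\phi^L$ actually needs, is that $L$ \emph{as computed inside} $L_{\beta}[g]$ equals $L_{\beta}$, because $Ord\cap L_{\beta}[g]=\beta$ and (under the admissibility assumption on $L_{\beta}[g]$) the levels $L_{\gamma}$, $\gamma<\beta$, are computed correctly there. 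Second, and more seriously, the concluding bound ``all such witnesses lie in $L_{\delta}[g]\cap L=L_{\delta}$'' fails for the same reason: a witness $y\in b$ has rank $<\delta$ and is constructible in the sense of $L_{\beta}[g]$, hence lies in $L_{\beta}$, but nothing places it in $L_{\delta}$; so you obtain no bound below $\beta$ on the $L$-levels of the witnesses and no contradiction with cofinality. The repair is standard: bound levels, not ranks. Replace $\phi^L(x,y)$ by the $\Sigma_1$ statement $\exists\eta\,(y\in L_{\eta}\wedge\exists z\in L_{\eta}\,\psi(x,y,z))$ and apply $\Sigma_1$-collection (equivalently $\Sigma_1$-reflection) in the putatively admissible $L_{\beta}[g]$ to the ordinals $\eta$; this produces a single $\eta_0<\beta$ with all witnesses in $L_{\eta_0}\in L_{\beta}$, giving the desired contradiction. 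With these two corrections your argument goes through.
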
 
\begin{proof} 
1. If $\alpha$ is admissible and $h$ is a Cohen generic filter over $L_{\alpha+1}$, then $L_{\alpha}[h]$ is admissible by Theorem $10.1$ of \cite{Ma}. 
Note that $g$ is Cohen generic over $L_{\omega_{i}^{CK}+1}$ for all $i\in\omega$. 
Then $\omega_i^{CK,g}=\omega_i^{CK}$ for all $i\in\omega$. 
Hence $\omega_{\omega}^{CK,g}=\bigcup_{i\in\omega}\omega_{i}^{CK,g}=\bigcup_{i\in\omega}\omega_{i}^{CK}=\omega_{\omega}^{CK}$. 

2.  
As in the proof of the previous claim, 
$\omega_{j}^{CK}$ is $g$-admissible for all $j\leq i$, so that $\omega_{j}^{CK,g}=\omega_{j}^{CK}$ for all $j\leq i$. 
\end{proof}


\begin{thm}{\label{ITRMmeager}} 
Suppose that $x$ is a real and $A$ is a nonmeager Borel set such that $x$ is $ITRM$-computable from all $y\in A$. 
Then $x$ is $ITRM$-computable.
\end{thm}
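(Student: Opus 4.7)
The plan is to transcribe the structure of Theorem \ref{ITRMrandom} and of Proposition \ref{ITTMcomeager} to the Baire category setting. It suffices to show that $\bigcap_{y\in A} L_{\omega_{\omega}^{CK,y}}[y] = L_{\omega_{\omega}^{CK}}$, since the containment from right to left is immediate from the relativized characterization of $ITRM$-computability and the containment from left to right is what we need. By hypothesis and this characterization, $x$ lies in the intersection on the left, so we just need to trap the intersection inside $L_{\omega_{\omega}^{CK}}$.

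To do this, apply Lemma \ref{mutgen} with $\alpha = \omega_{\omega}^{CK}+1$ (which is countable) to the nonmeager Borel set $A$, obtaining $u,v\in A$ such that $u$ is Cohen generic over $L_{\omega_{\omega}^{CK}+1}$ and $v$ is Cohen generic over $L_{\omega_{\omega}^{CK}+1}[u]$. Lemma \ref{cohenadmlimpres} then gives $\omega_{\omega}^{CK,u}=\omega_{\omega}^{CK,v}=\omega_{\omega}^{CK}$. Consequently the hypothesis that $x$ is $ITRM$-computable from every element of $A$ yields
\[
x\in L_{\omega_{\omega}^{CK,u}}[u]\cap L_{\omega_{\omega}^{CK,v}}[v] = L_{\omega_{\omega}^{CK}}[u]\cap L_{\omega_{\omega}^{CK}}[v].
\]

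Since $\omega_{\omega}^{CK}$ is a limit of admissibles and hence an infinite indecomposable ordinal, $L_{\omega_{\omega}^{CK}}$ is provident. A standard product-genericity argument (as used in the proof of Proposition \ref{ITTMcomeager}) shows that $u$ and $v$ are in fact mutually $\mathbb{P}\times\mathbb{P}$-generic over $L_{\omega_{\omega}^{CK}}$, since any dense subset of $\mathbb{P}\times\mathbb{P}$ coded in $L_{\omega_{\omega}^{CK}}$ is met in virtue of $v$ being Cohen generic over $L_{\omega_{\omega}^{CK}+1}[u]$. Applying Lemma \ref{mutgensect} therefore yields
\[
L_{\omega_{\omega}^{CK}}[u]\cap L_{\omega_{\omega}^{CK}}[v] = L_{\omega_{\omega}^{CK}},
\]
so $x\in L_{\omega_{\omega}^{CK}}$ and hence $x$ is $ITRM$-computable.

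The only technical point worth being careful about is the verification that the reals $u,v$ produced by Lemma \ref{mutgen} are genuinely mutually generic over the provident level $L_{\omega_{\omega}^{CK}}$ in the sense required by Lemma \ref{mutgensect}; this is routine because Cohen forcing over $L_{\omega_{\omega}^{CK}+1}[u]$ automatically meets every dense set of $\mathbb{P}\times\mathbb{P}$ that lives in the smaller model $L_{\omega_{\omega}^{CK}}$, via the standard decomposition of product-forcing density into sequential density. Once this observation is in place, the proof is a direct parallel of the measure-theoretic argument, with Cohen-generic mutual pairs playing the role of successive random reals.
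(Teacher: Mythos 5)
Your proposal is correct and follows essentially the same route as the paper's proof: reduce to Cohen generics via the comeagerness of generic reals and Lemma \ref{mutgen}, use Lemma \ref{cohenadmlimpres} to see that $\omega_{\omega}^{CK,u}=\omega_{\omega}^{CK,v}=\omega_{\omega}^{CK}$, and then trap $x$ in $L_{\omega_{\omega}^{CK}}[u]\cap L_{\omega_{\omega}^{CK}}[v]=L_{\omega_{\omega}^{CK}}$ by Lemma \ref{mutgensect}. Your extra care about converting sequential genericity (over $L_{\omega_{\omega}^{CK}+1}$ and $L_{\omega_{\omega}^{CK}+1}[u]$) into product genericity over the provident level $L_{\omega_{\omega}^{CK}}$ is exactly the point the paper handles inside the proof of Lemma \ref{mutgensect}, so nothing is missing.
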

\begin{proof} We can assume that there is some $ITRM$ program $P$ which computes $x$ from all $y\in A$. 
The set $C$ of Cohen reals over $L_{\omega_{\omega}^{CK}}$ is comeager, so we can assume that $A\subseteq C$. 
There are mutually Cohen generic reals $u,v\in A$ over 
$L_{\omega_{\omega}^{CK}}$ by Lemma \ref{mutgen}. 
Then $L_{\omega_{\omega}^{CK,u}}[u]\cap L_{\omega_{\omega}^{CK,v}}[v]=L_{\omega_{\omega}^{CK}}[u]\cap L_{\omega_{\omega}^{CK}}[v]=L_{\omega_{\omega}^{CK}}$ by Lemma \ref{mutgensect}. 
Then 
$$L_{\omega_{\omega}^{CK}}\subseteq\bigcap_{y\in A}L_{\omega_{\omega}^{CK}}[y]
\subseteq L_{\omega_{\omega}^{CK}}[u]\cap L_{\omega_{\omega}^{CK}}[v]=L_{\omega_{\omega}^{CK}}$$ 
and hence $x$ is $ITRM$-computable. 
\end{proof}

\begin{remark} 
Following the same line of reasoning, 
if $x$ is $wITRM$-computable from all oracles in a nonmeager Borel set $A$ of oracles, then $x$ is $wITRM$-computable. 
\end{remark}

\section{$\alpha$-Turing machines}

Suppose that $\alpha>\omega$ is a countable admissible ordinal. 
In this section, we consider computability relative to a set of oracles of positive measure for parameter free $\alpha$-Turing machines as defined in \cite{KoSe}. These machines are similar to ITTMs, but have tape length $\alpha$. 
We crucially use the following characterization of the computability strength of $\alpha$-Turing machines. 
This is a minor modification of \cite[Lemma 3]{KoSe}. 

\begin{lemma}{\label{alphacomp}} Suppose that $\alpha>\omega$ is exponentially closed. 
A real $x$ is computable by an $\alpha$-Turing machine in an oracle $y$ if and only if $x$ is $\Delta_{1}$-definable in the parameter $y$ over $L_{\alpha}[y]$. 
\end{lemma}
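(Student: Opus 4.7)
The plan is to prove both directions by adapting the standard proof of \cite[Lemma 3]{KoSe} to a relativised version, inserting the oracle $y$ throughout. Exponential closure of $\alpha$ is needed so that Gödel pairing restricts to a bijection $\beta\times\beta\to\beta$ for a cofinal set of $\beta<\alpha$, allowing us to represent elements of $L_\beta[y]$ as bounded subsets of the $\alpha$-tape.

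For the forward direction, suppose the $\alpha$-Turing machine $P$ halts with oracle $y$ and output $x$. Since $\alpha$ is admissible and $P$ has only finitely many states, the halting time $\gamma$ lies below $\alpha$. The tape-and-state configuration at time $\delta\leq\gamma$ is obtained by a $\Sigma_1$-recursion on $\delta$: successor steps are given by the rudimentary transition function, limit steps are the $\Delta_0$-definable liminf of earlier configurations, and the oracle is read as a parameter $y$. By $\Sigma_1$-admissibility of $L_\alpha[y]$ this configuration function is total and $\Sigma_1$-definable, so $n\in x$ iff the $n$-th cell at time $\gamma$ is $1$, which is $\Delta_1$ over $L_\alpha[y]$ in the parameter $y$.

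For the converse, assume $n\in x\iff L_\alpha[y]\models\exists v\,\varphi(n,v,y)$ and $n\notin x\iff L_\alpha[y]\models\exists v\,\psi(n,v,y)$ for $\Delta_0$ formulas $\varphi,\psi$. I would design an $\alpha$-TM $Q$ that in the oracle $y$ enumerates characteristic codes for the levels $L_\beta[y]$ on the tape for $\beta<\alpha$: at successor stages it writes down the $\Sigma_\omega$-definable subsets of the previous level using the standard rudimentary presentation of the $J$-hierarchy relativised to $y$, and at limits it uses the liminf rule to reconstruct the union. Given exponential closure we can store a code for $L_\beta[y]$ on the segment $[0,\beta)$ of the tape and evaluate $\Delta_0$ formulas by the usual bounded-truth-predicate procedure. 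For each $n\in\omega$, $Q$ searches along $\beta<\alpha$ for the least stage at which either $\exists v\in L_\beta[y]\,\varphi(n,v,y)$ or $\exists v\in L_\beta[y]\,\psi(n,v,y)$ becomes true, and writes the corresponding bit. $\Sigma_1$-admissibility of $L_\alpha[y]$ ensures that the map $n\mapsto\beta(n)$ is bounded below $\alpha$, so the overall computation halts in $<\alpha$ steps with $x$ on the output tape.

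The main obstacle is bookkeeping in the reverse direction: implementing the $L[y]$-construction on an $\alpha$-tape and, at each limit stage, ensuring that the liminf convention produces precisely the intended code rather than a corrupted one. The argument in \cite{KoSe} shows how to do this in the unrelativised case, and the only modification is to treat $y$ as an additional atom available through oracle reads during each successor step; exponential closure of $\alpha$ is exactly what lets the pairing and coding operations remain internal to the levels being constructed.
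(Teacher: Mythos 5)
Your sketch leans on a hypothesis the lemma does not give you. The statement assumes only that $\alpha>\omega$ is exponentially closed, and it quantifies over arbitrary oracles $y$; yet both directions of your argument invoke ($\Sigma_1$-)admissibility, and in the converse direction you need admissibility of $L_{\alpha}[y]$ itself to bound the witness map $n\mapsto\beta(n)$ below $\alpha$. Neither is available: an exponentially closed ordinal need not be admissible, and even when $\alpha$ is admissible, $L_{\alpha}[y]$ need not be (e.g.\ $\alpha=\omega_1^{CK}$ and $y$ coding a wellorder of type $\omega_1^{CK}$). In the paper this lemma is not reproved at all --- it is quoted as a relativization of Lemma 3 of Koepke--Seyffert --- and that argument is organized precisely so that exponential closure alone suffices: exponential closure guarantees that a code for $L_{\beta}[y]$ can be produced on the $\alpha$-tape in fewer than $\alpha$ steps for each $\beta<\alpha$, and membership queries are settled one input at a time, so for each fixed $n$ the search terminates at the level $\beta(n)<\alpha$ and no uniform bound $\sup_n\beta(n)<\alpha$ (i.e.\ no $\Sigma_1$-bounding) is ever needed. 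Your version, in which a single run must write all of $x$ and then halt, is exactly the point where admissibility gets smuggled in; as written this proves the lemma only for admissibility-preserving oracles (which the paper arranges separately, in Lemmas \ref{RelAdm}, \ref{invariantadmissibles} and \ref{cohenadmlimpres}, when it applies the lemma), not the lemma as stated. You should either switch to the query-by-query convention of \cite{KoSe} or explain how to remove the boundedness requirement.

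A smaller but genuine misstep is the justification in the forward direction: the halting time of $P^y$ is below $\alpha$ because $\alpha$-machines by definition have both tape length and running time bounded by $\alpha$ (this is why the paper's final section asks about $(\alpha,\beta)$-machines as a generalization), not because ``$\alpha$ is admissible and $P$ has finitely many states.'' Admissibility of the ordinal does not bound halting times of machines of this kind --- ITTMs halt long after $\omega_1^{CK}$ --- so the inference you state is invalid even though the conclusion you need holds by definition. The rest of that direction (coding the length-$\gamma$ computation as a bounded set via pairing, uniqueness giving $\Delta_1$) is fine, but note that placing the computation inside $L_{\alpha}[y]$ should again be argued from exponential closure, not from an appeal to admissibility.
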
 

In particular, for reals $x,y$ with $\omega_{i}^{CK,y}=\omega_{i}^{CK}$, $x$ is $\omega_{i}^{CK}$-computable from $y$ if and only if $x\in L_{\omega_{i}^{CK}}[y]$. 

If $\alpha$ is an ordinal, let $\alpha^+$ denote the least admissible ordinal $\gamma>\alpha$. Let $\bar{\alpha}=\omega_{\bar{\iota}}$ denote the least admissible ordinal $\gamma$ such that $L_{\gamma^{+}}$ does not contain a real coding $\gamma$. 
Then for every admissible $\alpha<\bar{\alpha}$, the $<_L$-least real $c_{\alpha}$ coding $\alpha$ is in $L_{\alpha^+}$. 
We will extend the preceding results to all admissible ordinals $\alpha<\bar{\alpha}$. 


\begin{lemma}{\label{invariantadmissibles}} 
 If $\iota<\bar{\iota}$, then $\mu(\{x\in{}^{\omega}2\mid \omega_{\iota}^{CK,x}=\omega_{\iota}^{CK}\})=1$. 
\end{lemma}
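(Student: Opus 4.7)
The plan is to prove the lemma by transfinite induction on $\iota<\bar{\iota}$, extending the strategy of Lemma~\ref{RelAdm} (which handled $\iota\leq\omega$) to all countable indices $\iota<\bar{\iota}$. The essential use of $\iota<\bar{\iota}$ is that for every $\xi<\bar{\iota}$ the $<_L$-least real $c_\xi:=c_{\omega_\xi^{CK}}$ coding $\omega_\xi^{CK}$ lies in $L_{(\omega_\xi^{CK})^+}=L_{\omega_{\xi+1}^{CK}}$ and hence satisfies $\omega_1^{CK,c_\xi}=\omega_{\xi+1}^{CK}$; this real will serve as the oracle parameter in successive applications of the relativized Sacks theorem, Lemma~\ref{RelAdm}(1). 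Note also that every $\iota<\bar{\iota}$ is itself countable, since $\bar{\alpha}=\omega_{\bar{\iota}}$ is by definition a countable admissible ordinal, which makes countable intersections of full-measure sets available at limit stages.

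Write $X_\xi:=\{y\in{}^\omega 2 : \omega_\xi^{CK,y}=\omega_\xi^{CK}\}$. The base case $\iota=1$ is exactly Lemma~\ref{RelAdm}(1) applied to the empty oracle. For the successor step $\iota=\xi+1<\bar{\iota}$, assume inductively that $\mu(X_\xi)=1$ and apply Lemma~\ref{RelAdm}(1) with the oracle $c_\xi$ to get $\mu(Y_\xi)=1$, where $Y_\xi:=\{y : \omega_1^{CK,c_\xi\oplus y}=\omega_1^{CK,c_\xi}\}$. For every $y\in X_\xi\cap Y_\xi$ the $<_L$-least code $c(\xi,y)$ of $\omega_\xi^{CK,y}=\omega_\xi^{CK}$ coincides with $c_\xi$, so the standard identity $\omega_{\xi+1}^{CK,y}=\omega_1^{CK,c(\xi,y)\oplus y}$ already invoked in the proof of Lemma~\ref{RelAdm}(2) yields
\[
\omega_{\xi+1}^{CK,y}=\omega_1^{CK,c_\xi\oplus y}=\omega_1^{CK,c_\xi}=\omega_{\xi+1}^{CK},
\]
so $X_{\xi+1}\supseteq X_\xi\cap Y_\xi$ has full measure. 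At a limit ordinal $\iota<\bar{\iota}$, the countability of $\iota$ makes $\bigcap_{\xi<\iota}X_\xi$ a countable intersection of full-measure sets and hence of full measure; for any $y$ in this intersection the first $\iota$ many $y$-admissibles are exactly $(\omega_\xi^{CK})_{\xi<\iota}$, so
\[
\omega_\iota^{CK,y}=\sup_{\xi<\iota}\omega_\xi^{CK,y}=\sup_{\xi<\iota}\omega_\xi^{CK}=\omega_\iota^{CK}.
\]

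The main delicate point, and indeed the only place where the hypothesis $\iota<\bar{\iota}$ is genuinely used, is the availability of the real $c_\xi$ as an input to Lemma~\ref{RelAdm}(1) together with the identity $\omega_{\xi+1}^{CK,y}=\omega_1^{CK,c_\xi\oplus y}$ on $X_\xi$: both of these rely on $c_{\omega_\xi^{CK}}\in L_{\omega_{\xi+1}^{CK}}$, which is precisely what fails at $\bar{\iota}$. Once one grants the parameterised Sacks theorem and the standard admissibility identity above, the induction and the limit step are routine, so no substantial new technical obstacle beyond those already overcome in Lemma~\ref{RelAdm} should appear.
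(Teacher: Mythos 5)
Your proposal is correct and follows essentially the same route as the paper's proof: a transfinite induction in which the successor step applies the relativized Sacks result (Lemma \ref{RelAdm}(1)) with the oracle $c_{\xi}\in L_{\omega_{\xi+1}^{CK}}$ and the identity $\omega_{\xi+1}^{CK,y}=\omega_{1}^{CK,c_{\xi}\oplus y}$ on the measure-one set where $\omega_{\xi}^{CK,y}=\omega_{\xi}^{CK}$, and the limit step takes a countable intersection of measure-one sets and passes to suprema. The only differences are cosmetic (the paper intersects all earlier stages at once rather than just $X_{\xi}\cap Y_{\xi}$), so nothing further is needed.
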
 
\begin{proof} 
The proof is similar to Lemma \ref{RelAdm}, where the case $\iota<\omega$ was proved. 
Suppose that $\iota<\bar{\alpha}$ and the claim is known for all $\gamma<\iota$. 
Let $M_{\gamma}:=\{y\mid \omega_{\gamma}^{CK,y}=\omega_{\gamma}^{CK}\}$ for $\gamma<\iota$ 
and $M:=\bigcap_{\delta<\iota}M_{\delta}$. Then $\mu(M)=1$. 
If $\iota=\gamma+1$, then 
$\mu(\{z\mid \omega_{1}^{CK,z\oplus c_{\gamma}}=\omega_{1}^{CK,c_{\gamma}}\})=1$ by Lemma \ref{RelAdm}. 
Since $\omega_{1}^{CK,c_{\gamma}}=\omega_{\gamma+1}^{CK}=\omega_{\iota}^{CK}$, this implies $\mu(\{y\in M\mid \omega_{1}^{CK,y\oplus c_{\gamma}}=\omega_{\iota}^{CK}\})=1$. 
For all $y\in M$, 
we have $\omega_{\gamma}^{CK,y}=\omega_{\gamma}^{CK}$ and $\omega_{1}^{CK,y\oplus c_{\gamma}}=\omega_{\gamma+1}^{CK,y}=\omega_{\iota}^{CK}$, so $\omega_{\iota}^{CK,y}=\omega_{\iota}^{CK}$. 
If $\iota$ is a limit ordinal, then  $\omega_{\gamma}^{CK,y}=\omega_{\gamma}^{CK}$ for all $\gamma<\iota$ and $y\in M$. 
Then $\omega_{\iota}^{CK,y}=\bigcup_{\gamma<\iota}\omega_{\gamma}^{CK,y}=\bigcup_{\gamma<\iota}\omega_{\gamma}^{CK}=\omega_{\iota}^{CK}$ for all $y\in M$ and 
$\mu(\{x\in{}^{\omega}2\mid \omega_{\iota}^{CK,x}=\omega_{\iota}^{CK}\})=1$. 
\end{proof}

Consequently
$L_{\omega_{\iota}^{CK,x}}[x]=L_{\omega_{\iota}^{CK}}[x]$ for almost all $x$ and all $\iota<\bar{\iota}$. 

\begin{thm}{\label{strongversionalpha}}
Suppose that $\alpha=\omega_{\iota}^{CK}<\bar{\alpha}$ is admissible, $x$ is a real, $A$ is a set of positive measure, and $P$ is an $\alpha$-Turing program such that $P^{y}=x$ for all $y\in A$. Then $x$ is $\alpha$-computable. 
\end{thm}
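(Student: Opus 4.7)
The plan is to imitate the proof of Theorem \ref{ITRMrandom}, replacing the cofinal sequence $(\omega_i^{CK})_{i < \omega}$ in $\omega_\omega^{CK}$ by a countable cofinal sequence in $\alpha = \omega_\iota^{CK}$. First I would invoke Lemma \ref{invariantadmissibles} to shrink $A$ to a positive measure Borel subset on which $\omega_\gamma^{CK,y} = \omega_\gamma^{CK}$ for every $\gamma \leq \iota$. By Lemma \ref{alphacomp}, the hypothesis then becomes $x \in L_\alpha[y] = L_{\omega_\iota^{CK}}[y]$ for every $y \in A$, reducing the goal to showing $x \in L_\alpha$.

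Next I would produce a successor $\gamma_0 \leq \iota$ such that $B := \{y \in A \mid x \in L_{\omega_{\gamma_0}^{CK}}[y]\}$ has positive measure. If $\iota$ is itself a successor, take $\gamma_0 = \iota$ and $B = A$. Otherwise $\iota$ is a limit and I would consider the increasing family $B_\gamma := \{y \in A \mid x \in L_{\omega_\gamma^{CK}}[y]\}$ for $\gamma < \iota$. Each $B_\gamma$ is provably $\Delta^1_2$ in the parameter $c_\gamma$ (which lies in $L_\alpha$ because $\iota < \bar{\iota}$) and therefore measurable by \cite[Exercise 14.4]{Kanamori}. From $A = \bigcup_{\gamma < \iota} B_\gamma$ and the countability of $\iota$, continuity of measure from below yields a least $\gamma_0 < \iota$ with $\mu(B_{\gamma_0}) > 0$. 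If $\gamma_0$ were a limit, then $B_{\gamma_0} = \bigcup_{\delta < \gamma_0} B_\delta$ together with $\mu(B_\delta) = 0$ for all $\delta < \gamma_0$ would force $\mu(B_{\gamma_0}) = 0$ by countable subadditivity, a contradiction; so $\gamma_0$ is a successor.

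Writing $\gamma_0 = \delta_0 + 1$, the $<_L$-least code $c_{\delta_0}$ for $\omega_{\delta_0}^{CK}$ again lies in $L_\alpha$, and for every $y \in B$ the identity $\omega_1^{CK, y \oplus c_{\delta_0}} = \omega_{\delta_0+1}^{CK,y} = \omega_{\gamma_0}^{CK}$ rewrites $x \in L_{\omega_{\gamma_0}^{CK}}[y]$ as $x \leq_h y \oplus c_{\delta_0}$. Applying Sacks' theorem (Theorem \ref{RelNies}) with oracle $c_{\delta_0}$, positivity of $\mu(B)$ forces $x \in L_{\omega_1^{CK, c_{\delta_0}}}[c_{\delta_0}] = L_{\omega_{\gamma_0}^{CK}} \subseteq L_\alpha$, and by Lemma \ref{alphacomp} this yields the required $\alpha$-computability of $x$.

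The main obstacle I anticipate is verifying the uniform measurability of the sets $B_\gamma$; this amounts to producing a provably $\Delta^1_2$ definition of $x \in L_{\omega_\gamma^{CK}}[y]$ in the parameters $(x, y, c_\gamma)$, exactly as in the corresponding step of Theorem \ref{ITRMrandom}. Once that bookkeeping is in hand, the proof reduces to a single application of Sacks' theorem relative to the code $c_{\delta_0}$.
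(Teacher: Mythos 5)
Your proposal is correct and follows essentially the same route as the paper: shrink $A$ using the almost-sure invariance of the relativized admissibles (Lemma \ref{invariantadmissibles}), translate the hypothesis via Lemma \ref{alphacomp} into $x\in L_{\omega_\iota^{CK}}[y]$, split $A$ into provably $\Delta^1_2$ (hence measurable) pieces indexed by $\gamma<\iota$, and conclude with the relativized Sacks theorem (Proposition \ref{RelNies}) in the parameter $c_{\delta_0}$. The only difference is organizational: the paper runs a transfinite induction on $\iota$ with separate $\iota=1$, successor, and limit cases, whereas you flatten the induction by noting that the least level of positive measure must be a successor and then apply relativized Sacks once.
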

\begin{proof}
Suppose that $\iota<\bar{\iota}$ and that the claim holds for all $\gamma<\iota$. 
We have $x\in L_{\omega_{\iota}^{CK,y}}[y]$ for all $y\in A$, so we can assume that $\omega_{\gamma}^{CK,y}=\omega_{\gamma}^{CK}$ for all $\gamma\leq\iota$ by Lemma \ref{invariantadmissibles}. 
Then $x\in L_{\omega_{\iota}^{CK}}[y]$ for all $y\in A$. 

If $\iota=1$, then we can assume that $\omega_{1}^{CK,y}=\omega_{1}^{CK}$ for all $y\in A$, since $\mu(\{y\subseteq\omega\mid \omega_{1}^{CK,y}=\omega_{1}^{CK}\})=1$ by Lemma \ref{RelAdm}. 
Then $x\in\bigcap_{y\in A}L_{\omega_{1}^{CK}}[y]$ by Lemma \ref{alphacomp}, so $x\leq_{h}y$ for all $y\in A$. Then $x\in L_{\omega_{1}^{CK}}$ by Theorem \ref{Nies} and hence $x$ is $\omega_{1}^{CK}$-computable by Lemma \ref{alphacomp}. 

If $\iota=\gamma+1>1$, then $x\in L_{\omega_{1}^{CK,c_{\gamma}\oplus y}}[c_{\gamma}\oplus y]=L_{\omega_{\iota}^{CK}}[y]$ and hence $x\leq_{h} c_{\gamma}\oplus y$ for all $y\in A$. 
If $x\leq_{h}c_{\gamma}$ then 
$x\in L_{\omega_{1}^{CK,c_{\gamma}}}[c_{\gamma}]=L_{\omega_{\iota}^{CK}}=L_{\alpha}$ and $x$ is $\alpha$-computable, as desired. 
If $x\nleq_{h}c_{\gamma}$ then $\mu(\{z|x\leq_{h}z\oplus c_{\gamma}\})=0$ by Lemma \ref{RelNies}. 
Since $x\leq c_{\gamma}\oplus a$ for all $a\in A$, 
this implies $\mu(A)=0$,  contradicting the assumption on $A$. 
 
If $\iota$ is a limit ordinal, then $x\in\bigcup_{\gamma<\iota}L_{\omega_{\gamma}^{CK}}[y]$ for all $y\in A$. There are $\gamma_y<\iota$ with $x\in L_{\omega_{\gamma_y}^{CK}}[y]$ for all $y\in A$. 
Let $A_{\gamma}:=\{y\in A|\gamma_y=\gamma\}$ for $\gamma<\iota$. 
Since $A_{\gamma}$ is provably $\Delta_{2}^{1}$, it is measurable \cite[Exercise 14.4]{Kanamori}. 
Then $\mu(A_{\gamma})>0$ for some $\gamma<\iota$. 
Hence $x\in L_{\omega_{\gamma}^{CK}}[y]$ for all $y\in A_{\gamma}$ and $x\in L_{\omega_{\gamma}^{CK}}\subseteq L_{\alpha}$. 
\end{proof} 

This can be extended to unboundedly many countable admissibles. 

\begin{thm}
There unboundedly many countable admissible ordinals $\alpha$ 
such that every real $x$ which is $\alpha$-computable from all elements of a set $A$ of positive measure is $\alpha$-computable. 
\end{thm}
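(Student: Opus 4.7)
The plan is to apply the $\iota=1$ case of Theorem~\ref{strongversionalpha} in relativized form. Given any $\beta<\omega_1$, I would pick a real $c$ with $\omega_1^{CK,c}>\beta$ and $c\in L_{\omega_1^{CK,c}}$---for instance, the $<_L$-least real coding some countable ordinal larger than $\beta$---and set $\alpha:=\omega_1^{CK,c}$. This produces a countable admissible ordinal above $\beta$ with $c\in L_\alpha$, so that $L_\alpha[c]=L_\alpha$. Since $\beta$ is arbitrary, a family of such $\alpha$ will automatically be cofinal below $\omega_1$.

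Suppose $A$ has positive measure and $x$ is $\alpha$-computable from every $y\in A$. I would first replace $A$ by its intersection with $\{y:\omega_1^{CK,c\oplus y}=\omega_1^{CK,c}\}$; by Lemma~\ref{RelAdm} this set has full measure, so the shrunk $A$ still has positive measure and $\omega_1^{CK,c\oplus y}=\alpha$ for every $y\in A$. Next, Lemma~\ref{alphacomp} yields $x\in L_\alpha[y]\subseteq L_\alpha[c\oplus y]=L_{\omega_1^{CK,c\oplus y}}[c\oplus y]$ for every $y\in A$, hence $x\leq_h c\oplus y$ throughout $A$. The conclusion then follows from Proposition~\ref{RelNies} applied with parameter $c$: were $x$ not in $L_{\omega_1^{CK,c}}[c]=L_\alpha$, the set $\{y:x\leq_h y\oplus c\}$ would be null, contradicting $\mu(A)>0$. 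Thus $x\in L_\alpha$ and $x$ is $\alpha$-computable by Lemma~\ref{alphacomp}.

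The main delicacy I anticipate lies in justifying the two identifications leaned on above: that $L_\alpha[c]=L_\alpha$, and that $\alpha$ is admissible as an ordinal rather than merely $c$-admissible. Both are consequences of $c\in L_\alpha$: once $c$ is definable at a stage below $\alpha$, the constructible and $c$-constructible hierarchies produce the same sets thereafter, and admissibility of $L_\alpha[c]=L_\alpha$ as a structure gives admissibility of $\alpha$ as an ordinal. Beyond this bookkeeping, the argument is just a verbatim relativization to $c$ of the $\iota=1$ branch of the proof of Theorem~\ref{strongversionalpha}, with Proposition~\ref{RelNies} and Lemma~\ref{RelAdm} playing the same roles they did there.
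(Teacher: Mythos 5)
Your reduction to the relativized $\iota=1$ case is fine as far as it goes, but the construction of the ordinals $\alpha$ does not deliver what the theorem asserts. You need a real $c$ with $c\in L_{\omega_1^{CK,c}}$; since $L_{\omega_1^{CK,c}}\subseteq L$, any such $c$ is constructible, and then $\omega_1^{CK,c}<\omega_1^{L}$ (the least $c$-admissible is countable in $L$ for $c\in L$). So every ordinal your recipe can produce lies below $\omega_1^{L}$. If $\omega_1^{L}<\omega_1$ --- which is consistent, and indeed holds in precisely the models emphasized elsewhere in the paper (e.g.\ when for every real $x$ the randoms over $L[x]$ have measure one) --- your family of $\alpha$'s is bounded in $\omega_1$, and for $\beta\geq\omega_1^{L}$ the recipe cannot even start, because there is no constructible real coding an ordinal above $\beta$. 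Hence the approach cannot prove, in $ZFC$, that there are unboundedly many countable admissible $\alpha$ with the stated property. There is also a secondary unproved step: even for $\delta<\omega_1^{L}$, the assertion that the $<_L$-least real $c$ coding $\delta$ satisfies $c\in L_{\omega_1^{CK,c}}$ is exactly the kind of claim the paper is careful about --- it is why $\bar{\alpha}$ is introduced and why Theorem \ref{strongversionalpha} is restricted to $\alpha<\bar{\alpha}$. The least code of $\delta$ may first appear far above the least admissible past $\delta$, and you would have to rule out a $c$-admissible below the level where $c$ is constructed; choosing $c$ to code the entire level $L_\mu$ at which it appears would repair this particular point, but not the boundedness by $\omega_1^{L}$.

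The paper's proof avoids constructible parameters altogether: it fixes a finite fragment $T$ of $ZFC$ sufficient for Lemma \ref{successive random reals}, notes (by condensation) that $L_\alpha\vDash T$ for unboundedly many countable $\alpha$, and then, since $L_\alpha$ is countable, finds $y\in A$ random over $L_\alpha$ and $z\in A$ random over $L_\alpha[y]$; Lemma \ref{successive random reals} gives $L_\alpha[y]\cap L_\alpha[z]=L_\alpha$, so $x\in L_\alpha$ and $x$ is $\alpha$-computable. If you want to salvage your relativization strategy, you would need cofinally many countable $\alpha$ of the form $\omega_1^{CK,c}$ with $c\in L_\alpha$, which fails outright when $\omega_1^{L}<\omega_1$; so some genuinely different idea, such as the paper's use of mutually random (or successively random) reals over $L_\alpha$, is required.
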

\begin{proof} Suppose that $T$ is a finite fragment of $ZFC$ which is sufficient for the proof of Lemma \ref{successive random reals}. Then $L_{\alpha}\vDash T$ for unboundedly many countable admissible ordinals. 
Suppose that $L_{\alpha}\vDash T$. 
Since $\mu(A)>0$, there are $y,z\in A$ such that $y$ is random generic over $L_{\alpha}$ and $z$ is random generic over $L_{\alpha}[y]$. 
Then $L_{\alpha}[y]\cap L_{\alpha}[z]=L_{\alpha}$ by Lemma \ref{successive random reals} and hence $x\in L_{\alpha}$. 
\end{proof}

%
%
%

Let us calculate bounds on $\bar{\alpha}$. Let $\alpha_0$ denote the least $\beta$ such that $L_{\alpha}$ is elementarily equivalent to $L_{\beta}$ for some $\alpha<\beta$. Recall the $\eta$ denotes the supremum of the halting times of $OTM$s. 

\begin{lemma} $\alpha_0\leq \bar{\alpha}<\eta$. 
\end{lemma}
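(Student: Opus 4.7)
My plan is to prove the two inequalities $\alpha_0 \leq \bar{\alpha}$ and $\bar{\alpha} < \eta$ separately, since they rely on quite different techniques.

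For the upper bound $\bar{\alpha} < \eta$, I will exhibit an $OTM$ halting time strictly above $\bar{\alpha}$. Starting from the $OTM$ program $P$ of Lemma \ref{enumerateL} which successively writes codes for $L_\alpha$ for all $\alpha \in Ord$, I will modify $P$ so that at each stage it recognizes when the current level is $L_\gamma$ for an admissible $\gamma$, then continues the enumeration up to the next admissible $\gamma^+$ while checking whether any real in $L_{\gamma^+}$ codes $\gamma$; if no such real appears by the time $L_{\gamma^+}$ has been exhausted, the program halts. Admissibility, the presence of a real coding $\gamma$, and the detection of the next admissible are all computable from a code for $L_{\gamma^+}$ via bounded truth predicates, exactly as in the proof of Lemma \ref{enumerateL}. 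By the defining property of $\bar{\alpha}$, the program cannot halt before $L_{\bar{\alpha}^+}$ has been enumerated, and it does halt once this enumeration is complete. This gives an $OTM$ halting time at least $\bar{\alpha}^+ > \bar{\alpha}$, so $\eta > \bar{\alpha}$.

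For $\alpha_0 \leq \bar{\alpha}$, the core observation is that $L_{\bar{\alpha}}$ must fail to be pointwise definable. Indeed, if every element of $L_{\bar{\alpha}}$ were definable over $L_{\bar{\alpha}}$ without parameters, then the satisfaction predicate for $L_{\bar{\alpha}}$, which is $\Delta_1$-definable over $L_{\bar{\alpha}+1}$, would yield a partial surjection from $\omega$ onto $L_{\bar{\alpha}}$, and hence a real coding $\bar{\alpha}$ inside $L_{\bar{\alpha}+2} \subseteq L_{\bar{\alpha}^+}$, contradicting the defining property of $\bar{\alpha}$. So the Skolem hull $H$ of the empty set in $L_{\bar{\alpha}}$ is a proper elementary substructure. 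Applying Jensen's condensation to the Mostowski collapse of $H$, I obtain $\delta < \bar{\alpha}$ with $L_\delta \cong H \prec L_{\bar{\alpha}}$, in particular $L_\delta \equiv L_{\bar{\alpha}}$, so taking $\alpha = \delta$ and $\beta = \bar{\alpha}$ in the definition of $\alpha_0$ gives $\alpha_0 \leq \bar{\alpha}$.

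The first inequality is essentially a bookkeeping refinement of the $OTM$ enumeration machinery already developed in Lemma \ref{enumerateL}. The substantive content of the second inequality lies in tying the failure of pointwise definability of $L_{\bar{\alpha}}$ to the failure of $\bar{\alpha}$ being coded below $\bar{\alpha}^+$; I expect this conceptual link, together with the invocation of Jensen condensation to push the collapse back to some $L_\delta$ with $\delta < \bar{\alpha}$, to be the main step, though each individual ingredient is standard.
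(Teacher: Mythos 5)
Your proof is correct and is essentially the paper's argument in both halves: for $\alpha_0\leq\bar{\alpha}$ the paper runs the same Skolem-hull/condensation/definable-surjection argument, just in contrapositive form (every admissible $\gamma<\alpha_0$ is pointwise definable, hence coded in $L_{\gamma^+}$), and for $\bar{\alpha}<\eta$ the paper simply applies Lemma \ref{OTMhaltingtimesup} to the $\Sigma_1$ statement asserting the existence of an admissible $\gamma$ with no real in $L_{\gamma^+}$ coding $\gamma$, which is exactly what your explicit halting program re-derives. The only glossed point, that your program's halting time exceeds $\bar{\alpha}$ because the enumeration must reach a code for $L_{\bar{\alpha}^+}$, is treated at the same level of detail in the paper's own proof of Lemma \ref{OTMhaltingtimesup}, so this is not a gap by the paper's standards.
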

\begin{proof} 
Suppose that $\gamma<\alpha_0$ 
is admissible. 
To see that $L_{\gamma^{+}}$ contains a real coding $L_{\gamma}$, 
let $S$ denote the set of all sentences which hold in $(L_{\gamma},\in)$. 
Since $\gamma<\alpha_0$, $L_{\gamma}$ is minimal such that $L_{\gamma}\models S$. 
Let $H$ 
denote the elementary hull of the empty set in $L_{\gamma}$ with respect to the canonical Skolem functions and let 
$L_{\bar{\gamma}}$ denote the transitive collapse of $H$. 
Then $H=L_{\bar{\gamma}}=L_{\gamma}$ by the minimality of $\gamma$. 
Then there is a surjection from $\omega$ onto $H$ and a real coding $L_{\gamma}$ in $L_{\gamma^+}$. 

To see that $\bar{\alpha}<\eta$, recall that $\eta$ is the supremum of the $\Sigma_{1}$-fixed ordinals by Lemma \ref{OTMhaltingtimesup}. 
The existence of admissibles $\alpha<\beta$ such that there is a real $x\in L_{\beta}\setminus L_{\alpha}$ is expressed by a $\Sigma_1$ formula which first becomes true in some $L_{\gamma}$ with $\gamma>\bar{\alpha}$. 
This implies $\eta>\bar{\alpha}$. 
\end{proof}


A generalization of the argument for $ITRM$s shows an analogous result for $\alpha$-Turing machines for admissible ordinals $\alpha$ and nonmeager Borel sets of oracles. 



\begin{thm}{\label{alphaTMmeager}} 
Suppose that $x$ is a real, $\alpha$ is a countable admissible ordinal, $A$ is a nonmeager Borel set of reals, and $P$ is an $\alpha$-Turing program such that $P^{y}=x$ for all $y\in A$. Then $x$ is $\alpha$-computable. 
\end{thm}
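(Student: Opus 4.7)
The plan is to adapt the proof of Theorem \ref{ITRMmeager} essentially verbatim, with $L_\alpha$ in the role of $L_{\omega_\omega^{CK}}$. Since $\alpha>\omega$ is admissible, $\alpha$ is an infinite indecomposable ordinal, hence $L_\alpha$ is provident; this is precisely the hypothesis required in order to invoke Lemmas \ref{mutgensect} and \ref{delta0forcing} for Cohen forcing over $L_\alpha$.

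First I would trim $A$ down so that every one of its elements is Cohen generic over $L_{\alpha+1}$: by Lemma \ref{comeagergen}, the set $C$ of such reals is comeager, so $A\cap C$ is still a nonmeager Borel set, and I may replace $A$ by it. Then I would apply Lemma \ref{mutgen} to extract a pair $u,v\in A$ that are mutually Cohen generic over $L_\alpha$. By Lemma \ref{alphacomp}, applied to the relativized computations $P^{u}=x$ and $P^{v}=x$, I conclude $x\in L_\alpha[u]\cap L_\alpha[v]$. Lemma \ref{mutgensect}, valid since $L_\alpha$ is provident, then gives $L_\alpha[u]\cap L_\alpha[v]=L_\alpha$, so $x\in L_\alpha$. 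A final application of Lemma \ref{alphacomp} with empty oracle yields that $x$ is $\alpha$-computable.

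The main obstacle I foresee is largely bookkeeping rather than conceptual. One point to verify is that Lemma \ref{mutgensect}, which is stated for a product-generic filter $G\times H$, applies to the pair $(u,v)$ produced by Lemma \ref{mutgen} as a two-step iteration; this is handled by the standard forcing-equivalence of Cohen$\,\ast\,$Cohen with Cohen$\,\times\,$Cohen, which rests on the countability and the c.c.c.\ of Cohen forcing. A second point is to confirm that Lemma \ref{alphacomp} may be invoked uniformly for all oracles $y\in A$ without any genericity hypothesis on $y$; this is immediate from the statement of that lemma, which requires only that $\alpha>\omega$ be exponentially closed, a condition satisfied by every admissible $\alpha$.
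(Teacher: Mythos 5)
Your argument is essentially the paper's own proof: restrict $A$ to the comeager set of Cohen generics over $L_{\alpha+1}$ (Lemma \ref{comeagergen}), pick mutually generic $u,v\in A$ by Lemma \ref{mutgen}, intersect via Lemma \ref{mutgensect} using providence of $L_{\alpha}$, and finish with Lemma \ref{alphacomp}. One step is glossed, and it is exactly the point where your remark that Lemma \ref{alphacomp} applies ``without any genericity hypothesis on $y$'' is misleading: the lemma only gives that $P^{u}=x$ makes $x$ $\Delta_1$-definable in the parameter $u$ over $L_{\alpha}[u]$, and passing from this to $x\in L_{\alpha}[u]$ requires $L_{\alpha}[u]$ to be admissible (equivalently, writing $\alpha=\omega_{\iota}^{CK}$, that $\omega_{\iota}^{CK,u}=\omega_{\iota}^{CK}$), which fails for arbitrary oracles. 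The paper supplies precisely this missing ingredient: since $u,v$ are Cohen generic over $L_{\alpha+1}$, the relevant admissibles are preserved by \cite[Theorem 10.1]{Ma}, so indeed $x\in L_{\alpha}[u]\cap L_{\alpha}[v]$; as you already restricted $A$ to such generics, one sentence repairs your write-up. Finally, your product-versus-iteration worry is settled not by a c.c.c./forcing-equivalence argument (chain conditions are not the relevant notion over $L_{\alpha}$) but by the product lemma, which is already invoked inside the proof of Lemma \ref{mutgensect}, so the pair produced by Lemma \ref{mutgen} is of the required form.
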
 
\begin{proof} Suppose that $\alpha=\omega_{\iota}^{CK}$. 
If 
$x$ is Cohen generic over $L_{\alpha+1}$, then $\omega_{\iota}^{CK,x}=\omega_{\iota}^{CK}$. This follows from the fact that for admissible $\beta<\alpha$, $L_{\beta}[x]$ is admissible by \cite[Theorem 10.1]{Ma}. 
The set $C$ of Cohen generic reals over $L_{\alpha+1}$ is comeager by Lemma \ref{comeagergen}, so we can assume that $A\subseteq C$. 
Then $\omega_{\iota}^{CK,y}=\omega_{\iota}^{CK}$ for all $y\in A$. 
There are mutual Cohen generics $u,v\in A$ over $L_{\alpha+1}$ by Lemma \ref{mutgen}. 
Then 
$$L_{\omega_{\iota}^{CK,u}}[u]\cap L_{\omega_{\iota}^{CK,v}}[v]=L_{\omega_{\iota}^{CK}}[u]\cap L_{\omega_{\iota}^{CK}}[v]=L_{\omega_{\iota}^{CK}}=L_{\alpha}$$ by Lemma \ref{mutgensect}. 
Hence $x\in L_{\alpha}$ is $\alpha$-computable. 
\end{proof}

\section{Conclusion} 


We considered the question whether computability from all oracles in a set of positive measure implies computability for various machine models. For most models this is the case, and for $OTM$s under the additional assumption that all for all $x\in {}^{\omega}2$, the set of random reals over $L[x]$ has measure $1$. 
Thus these machine models share the intuitive property of Turing machines that no information can be extracted from random information. 


\begin{question} Suppose that $\alpha\leq \beta<\omega_1$ are admissible. Are there analogous results for $(\alpha,\beta)$-Turing machines with tape length $\alpha$ and running time bounded by $\beta$? 
\end{question} 

Analogous results fail for other natural notions of largeness even in the computable setting, for example for Sacks measurability. For any $x\in {}^{\omega}2$, there is a perfect tree $T\subseteq {}^{<\omega}2$ such that $x$ is computable from every branch $y\in [T]$. Moreover $[T]$ is Sacks measurable and not Sacks null (see \cite[Definition 2.6]{Ik}). 

\begin{question} Suppose that $A$ is Borel and ${}^{\omega}2\setminus A$ is Sacks null. If $x\in {}^{\omega}2$ is computable from every $y\in A$, is $x$ computable? 
\end{question} 

Various machine types correspond in a natural manner to variants of Martin-L\"of-randomness. A fascinating subject is how far the analogy goes in each case. In particular, for which machine types is it true that, if
$x$ is computable from two mutually $ML$-random reals $y$ and $z$, then must $x$ be computable? We are pursuing this in ongoing work.

\bibliographystyle{alpha}

\end{document}